\newtheorem{thm}{Theorem}[section]
\newtheorem{prop}[thm]{Proposition}
\newtheorem{lem}[thm]{Lemma}
\newtheorem{cor}[thm]{Corollary}
\theoremstyle{definition}
\newtheorem{defin}[thm]{Definition}
\newtheorem{rem}[thm]{Remark}
\newtheorem{exa}[thm]{Example}
\newtheorem{quest}[thm]{Question}
\newtheorem*{xthm}{Theorem}
\DeclareMathOperator{\diam}{diam}
\newcommand{\N}{\mathbb{N}}
\newcommand{\K}{\mathcal{K}_{\sigma\delta}}
\newcommand{\ee}{\varepsilon}
\begin{document}

\baselineskip=17pt

\title[Strongly $\K$ spaces]{On a certain class of $\K$ Banach spaces}
\author{K.K. Kampoukos}
\address{University of Athens\\ Department of Mathematics\\Panepistemiopolis\\ 15784 Athens, Greece}
\email{kkamp@math.uoa.gr}

\author{S.K. Mercourakis}
\address{University of Athens\\ Department of Mathematics\\ Panepistemiopolos, 15784 Athens, Greece}
\email{smercour@math.uoa.gr}
\begin{abstract}
Using a strengthening of the concept of $\K$ set, introduced in this paper, 
we study a certain subclass of the class of $\K$ Banach spaces; the so called strongly 
$\K$ Banach spaces. This class of spaces includes subspaces of strongly 
weakly compactly generated (SWCG) as well as Polish Banach  spaces and it is related 
to strongly weakly $\mathcal{K}$--analytic (SWKA) Banach spaces as the known 
classes of $\K$ and weakly $\mathcal{K}$--analytic (WKA) Banach spaces are related.
\end{abstract}

\subjclass[2010]{ Primary 46B20, 54H05, 03E15; secondary 46B26}

\keywords{ $\K$, strongly $\K$ spaces, SWCG, SWKA Banach spaces}

\maketitle
\section*{Introduction}  
In this paper we study a certain subclass of $\K$ Banach spaces 
(\cite{T1}, \cite{AAM1} and \cite{AAM2}) and simultaneously a 
subclass of strongly weakly $\mathcal{K}$--analytic 
(SWKA) Banach spaces (\cite{MS}). Strongly $\mathcal{K}$--analytic 
topological spaces (a class of spaces contained in $\mathcal{K}$--analytic spaces) 
were introduced in \cite{MS} in order to study subspaces of strongly 
weakly compactly generated (SWCG) Banach spaces; the latter class was 
introduced by Schl\"uchtermann and Wheller in \cite{SW} (see also \cite{FMZ2} and \cite{LR}). 
SWKA Banach spaces are those Banach spaces which are strongly $\mathcal{K}$--analytic in 
their weak topology and include subspaces of SWCG Banach spaces (\cite{MS}). 
We recall that weakly $\mathcal{K}$--analytic (WKA) spaces, defined as those Banach 
spaces which are $\mathcal{K}$--analytic in their weak topology (\cite{T1}, \cite{V}), 
include $\K$ Banach spaces (i.e. those Banach spaces $X$ which are $\K$ sets 
in $(X^{**}, w^*)$)). The interrelation and analogies between the above mentioned 
``strong'' classes of spaces and also the structure of SWKA Banach spaces 
are discussed and investigated in \cite{MS} (see also \cite{KM}).

In Section \ref{S:A} of the present paper we strengthen the $\K$ property 
of a subset of a topological space by introducing the concept of a strongly 
$\K$ set (Def. \ref{D:SKcd}). Our motivation was to find a characterization 
of strongly $\mathcal{K}$--analytic topological spaces in the sense of Choquet; 
recall that a topological space is $\mathcal{K}$--analytic according to Choquet, 
if it is a continuous image of a $\K$ subset of some compact space (see \cite{JR}). 
So we prove that a topological space is strongly $\mathcal{K}$--analytic if 
and only if, it is an image under a (continuous) compact covering map of a 
strongly $\K$ subset of some compact space (Th. \ref{T:Choquet}). 
We also prove that any Polish subspace of any compact space $K$ is a 
strongly $\K$ subset of $K$ (Th. \ref{T:compactification}). Then we characterize 
strongly $\K$ sets in a topological space $X$ as those sets $A\subseteq X$ 
such that $\mathcal{K}(A)$ (the set of compact non empty subsets of $A$) is 
a $\K$ subset of $\mathcal{K}(X)$, where the latter space is endowed with 
Vietoris topology (Th. \ref{T:Vietoris}).

In Section \ref{S:B}, we deal with a natural generalization of SWCG Banach 
spaces. A SWCG Banach space $X$ is one which is strongly generated by a weakly 
compact set $K \subseteq X$. We consider a superspace $Z$ of $X$ and we let 
the strongly generating weakly compact set $K$ be contained in $Z$. 
In that case we say that $X$ is SWCG with respect to $Z$. It is clear that 
the new class is closed under (closed) subspaces; on the other hand by 
\cite[Th. 3.9]{MS} the class of SWCG is not. So the new class strictly contains 
the class of SWCG. Besides this fact, we show that the new class has similar 
properties with the old one (cf. \cite{SW}). So we prove a characterization 
of the new class using the Mackey topology on the dual space (Th. \ref{T:Mackey}) 
and we show that their members are weakly sequentially complete spaces 
(Th. \ref{T:wsc}). We also prove by an example that the new class is not closed 
for countable $\ell_p$--sums, $1<p<+\infty$, even in the case when each space 
is SWCG (Th. \ref{T:sum}). Our example, a separable weakly sequentially complete 
space not isomorphic to a (subspace of) SWCG, has stronger properties than 
\cite[Example 2.6]{SW}.

In Section \ref{S:C}, we introduce the common subclass of $\K$ and SWKA Banach 
spaces, which we mentioned in the first lines of this introduction. A Banach 
space $X$ will be called a strongly $\K$ space, if it is a strongly $\K$ 
subset of $(X^{**}, w^*)$. It should be clear that such a Banach space is 
$\K$ and SWKA. Note that every separable Banach space is $\K$, but not 
necessarily a strong $\K$ space; indeed, the space $c_0(\N)$ is not even 
SWKA (\cite{MS}). The class of strongly $\K$ Banach spaces is related to 
the class of SWKA Banach spaces as the familiar classes of $\K$ and WKA 
Banach spaces are related. Every Banach space which is SWCG with respect 
to a superspace and every Polish Banach space is strongly $\K$ 
(Ths. \ref{T:*Kcd} and \ref{T:Polish}). Moreover the class of strongly 
$\K$ Banach spaces is closed under countable $\ell_p$--sums, 
$1\leq p <+\infty$ (Th. \ref{T:prodKcd}). Finally, we investigate 
locally (in the weak topology of a Banach space) the property of 
strong $\mathcal{K}$--analyticity and the property of a set to 
be strongly $\K$. We conclude with some open questions. 

\section*{Preliminaries and notation} We denote by $\Sigma$ the set $\mathbb{N}^\mathbb{N}$ of 
infinite sequences of positive integers, endowed with the cartesian topology, which makes 
$\Sigma$ (usually called the "Baire space") a Polish space (i.e., homeomorphic to a complete 
separable metric space). $S$ stands for the set $\bigcup_{n=0}^\infty \mathbb{N}^n$, 
($\mathbb{N}^0=\emptyset)$ of finite sequences of positive integers. We give $S$ the partial
order of ''initial segments'' which makes $S$ into a tree: for $s=(s_1,\ldots, s_n)$, $t=(t_1, \ldots, t_m)$ 
members of $S$ we define $s\leq t$ if $n\leq m$ and 
$s_i=t_i$ for all $i=1,2\ldots, n$. If $s=(s_1,\ldots, s_k)\in S$, $\sigma=(n_1, n_2, \dots, n_k, \ldots) \in \Sigma$ and $m\in \mathbb{N}$, then we write: (i) $s<\sigma$ if $s_i=n_i$ for all $i=1,2,\ldots, k$ (ii) $\sigma|m$ 
for the finite sequence $(n_1, n_2, \ldots, n_m)$. For $\tau$, $\sigma\in \Sigma$ we set $\sigma \leq \tau$ 
if $\sigma (n) \leq \tau (n)$ for all $n \in \N$.

In the present paper by the term topological space we mean a Hausdorff and completely regular space.
For a  topological space $X,\: \mathcal{K}(X)$ is the set of compact 
non empty subsets of $X$. If $Y$ is a topological space, a map $F:Y \mapsto \mathcal{K}(X)$
is said to be upper semicontinuous (usco) if for every $y\in Y$ and $V$ open subset of $X$ with $F(y)\subseteq V$ 
there exists a neighbourhood $W$ of $y$ such that $F(W)=\bigcup\{F(t): t\in W\} \subseteq V$.

A topological space $X$ is called $\mathcal{K}$--analytic if there exists an usco map 
$F \colon \Sigma \to \mathcal{K}(X)$ such that $F(\Sigma)=X$. Moreover, if for each compact subset 
$L$ of $X$ there exists $\sigma \in \Sigma$ such that $L \subseteq F(\sigma)$ the space $X$ is called 
strongly $\mathcal{K}$--analytic (\cite{MS}).

A topological space $X$ is called \v{C}ech complete if it is a G$_\delta$ subset in some 
(every) compactification $cX$ of $X$.

Let $X$ be a Banach space, $\varepsilon \geq 0$. A bounded subset $M$ of $X$ is called  
$\varepsilon$--weakly relatively compact if 
$\overline{M}^{w^*} \subseteq X+ \varepsilon B_{X^{**}}\,$ .

We use \cite{LT} and \cite{FHHMZ} as basic references for the theory, notation and terminology of
Banach spaces and \cite{JR} for the theory of $\mathcal{K}$--analytic and countably determined 
topological spaces. If $X$ is a (real) Banach space, then $B_X$ and $B_{X^*}$ are the closed
unit balls of $X$ and its dual $X^*$ respectively.
\section{Strongly $\K$ topological spaces}
\label{S:A}
We recall that a subset $A$ of a topological space $X$ is called $\K$ if there exists a countable family
$\{K_{n,m} : n,m \in \N \}$ of compact subsets of $X$ such that $A=\bigcap_{n=1}^\infty \bigcup_{m=1}^\infty K_{n,m}$. In other words $A$ is a $\K$ subset of $X$ if $A$ can be written as a countable intersection of $\sigma$ -compact subsets of $X$.

In the next definition we strengthen the concept of the $\K$ subset of a topological space.
\begin{defin} \label{D:SKcd}
A subset $A$ of a topological space $X$ is called \emph{strongly $\K$} if there exists a countable family
$\{K_{n,m} : n,m \in \N \}$ of compact subsets of $X$ such that: 
\begin{enumerate}[\upshape (i)]
	\item $A=\bigcap_{n=1}^\infty \bigcup_{m=1}^\infty K_{n,m}$.
	\item For each compact subset $K$ of $A$ and for each $n \in \N$ there exists $m \in \N$ such that
	$K\subseteq K_{n,m}$.
\end{enumerate}
\end{defin}
\begin{rem} \label{R:Kcd}
\textbf{(i)} It is clear that a strongly $\K$ subset of a topological space $X$ is a $\K$ subset of $X$.

\textbf{(ii)} It is easy to check that if $A$ is a $\K$ (resp. strongly $\K$) subset of a topological space $X$ and $B$ is a relatively closed subset of $A$, then $B$ is a $\K$ (resp. strongly $\K$) subset of $X$.
\end{rem}
We now give some simple examples of strongly $\K$ subsets. We first recall that a topological space 
$X$ is said to be hemicompact if it can be written as $X=\bigcup_{n=1}^\infty K_n$, where $K_n$ are 
compact subsets of $X$ such that each compact set in $X$ is contained in some $K_n$. 
It is obvious that every hemicompact subspace $A$ of a topological space $X$ is a strongly 
$\K$ subset of $X$.
\begin{exa} \label{hemicompact}
\textbf{(a)} Every $\sigma$--compact and locally compact space is hemicompact (\cite[p. 250]{E}). A considerable class  of such spaces is the class of locally compact, metrizable and separable topological spaces. In particular, an open and $F_\sigma$ subset of a compact space is in the relative topology $\sigma$--compact and locally compact space.
\textbf{(b)} Every countable Hausdorff space whose compact sets are finite is hemicompact.
\textbf{(c)} Every dual Banach space $X^*$, endowed with the $w^*$ topology is hemicompact. 
\end{exa}

\begin{prop}\label{P:Kanalytic}
Every strongly $\K$ subset $A$ of a topological space $X$ is in the relative topology strongly $\mathcal{K}$--analytic.
\end{prop}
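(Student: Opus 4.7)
The plan is a direct construction of a witnessing usco map. Let $\{K_{n,m}: n,m \in \N\}$ be a countable family of compact subsets of $X$ witnessing that $A$ is strongly $\K$ as in Definition \ref{D:SKcd}. Define a set-valued map $F: \Sigma \to \mathcal{K}(A)$ by
\[
F(\sigma) = \bigcap_{n=1}^\infty K_{n,\sigma(n)}, \qquad \sigma \in \Sigma.
\]
Each $F(\sigma)$ is a closed subset of the compact set $K_{1,\sigma(1)}$, hence compact, and is contained in $\bigcap_n \bigcup_m K_{n,m} = A$. To guarantee that every $F(\sigma)$ is non-empty (and so genuinely lies in $\mathcal{K}(A)$), I would first fix an arbitrary point $p\in A$ and replace each $K_{n,m}$ by $K_{n,m}\cup\{p\}$; a short check shows that this substitution preserves both conditions (i) and (ii) of Definition \ref{D:SKcd}, and forces $p\in F(\sigma)$ for every $\sigma$.

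With $F$ in hand, the proof reduces to three verifications. First, $F(\Sigma)=A$: for $x\in A$, condition (i) lets us pick $m_n$ with $x\in K_{n,m_n}$ for each $n$, and then $x\in F((m_n))$. Second, the compact-covering property characteristic of \emph{strong} $\mathcal{K}$-analyticity: for a compact $L\subseteq A$, condition (ii) yields $m_n$ with $L\subseteq K_{n,m_n}$ for each $n$, whence $L\subseteq F((m_n))$. Third, upper semicontinuity of $F$: if $F(\sigma_0)\subseteq V$ with $V$ open in $X$, then the family of compact sets $\{K_{n,\sigma_0(n)}\setminus V\}_{n\in\N}$ has empty intersection, so by the finite intersection property there exists $N$ with $\bigcap_{n=1}^N K_{n,\sigma_0(n)}\subseteq V$. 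The basic open neighbourhood $W=\{\sigma\in\Sigma:\sigma|N=\sigma_0|N\}$ of $\sigma_0$ then satisfies $F(\sigma)\subseteq\bigcap_{n=1}^N K_{n,\sigma(n)}=\bigcap_{n=1}^N K_{n,\sigma_0(n)}\subseteq V$ for every $\sigma\in W$, as required.

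I do not expect any genuine obstacle. Condition (ii) of the strong $\K$ definition is engineered precisely to deliver the compact-covering refinement that distinguishes strong $\mathcal{K}$-analyticity from ordinary $\mathcal{K}$-analyticity, while the usco property is a standard compactness consequence of the $K_{n,m}$ being closed in the Hausdorff space $X$. The only mild technical subtlety is the non-emptiness of the fibres of $F$, which the $K_{n,m}\mapsto K_{n,m}\cup\{p\}$ adjustment resolves cleanly without disturbing either defining property.
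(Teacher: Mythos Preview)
Your proof is correct and follows essentially the same route as the paper: define $F(\sigma)=\bigcap_{n} K_{n,\sigma(n)}$, check $F(\Sigma)=A$ and upper semicontinuity, and use condition~(ii) of Definition~\ref{D:SKcd} to obtain the compact-covering property. The paper first arranges $(K_{n,m})_m$ to be increasing and writes $F(\sigma)=\bigcap_n B_{\sigma|n}$ with $B_s=\bigcap_{i\le n}K_{i,m_i}$, but this is the same map; it then simply asserts that $F$ is usco with $F(\Sigma)=A$, whereas you spell out the finite-intersection argument and add the $K_{n,m}\mapsto K_{n,m}\cup\{p\}$ adjustment to guarantee non-empty fibres---a point the paper leaves implicit.
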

\begin{proof}
Let $(K_{n, m})$ be a double sequence of compact subsets of $X$, which satisfies conditions (i), (ii) of Def.\ref{D:SKcd}. 
We can assume and we do that for each $n \in \N$ the sequence $(K_{n, m})_m$ is increasing. For each 
$s=(m_1, m_2,\ldots m_n) \in S $, put $B_s=\bigcap_{i=1}^n K_{i,m_i}$. Let $K$ be a compact subset of $A$. Then according to condition (ii) of Def.\ref{D:SKcd} for each $i \in \N$ there exists $m_i \in \N$ such that $K\subseteq K_{i, m_i}$. 
If we set $\sigma =(m_1, m_2 ,\ldots m_i, \dots )$, then we have 
	\[K\subseteq \bigcap_{i=1}^\infty K_{i, m_i}, \quad \text{hence} \quad 
	K\subseteq \bigcap _{n=1}^\infty B_{\sigma|n} \subseteq A.
\]
It is not difficult to see that the map 
	\[F\colon \varSigma \to \mathcal{K}(A) \quad \text{with}\quad F(\sigma)=\bigcap_{n=1}^\infty B_{\sigma|n}
\]
is usco with $F(\Sigma)=A$, hence the space $A$ is strongly $\mathcal{K}$--analytic.
\end{proof}

\begin{cor}
Let $A$ be a metrizable and strongly $\K$ subset of a topological space $X$. Then $A$ is a Polish space.
\end{cor}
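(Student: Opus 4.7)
My plan is to reduce the statement to a classical theorem of Christensen via Proposition~\ref{P:Kanalytic}. By that proposition, $A$ equipped with its relative topology from $X$ is strongly $\mathcal{K}$--analytic, so there is an usco map $F:\Sigma \to \mathcal{K}(A)$ with $F(\Sigma)=A$ having the additional property that every compact subset of $A$ is contained in some $F(\sigma)$. Since $A$ is $\mathcal{K}$--analytic it is Lindel\"of, and since $A$ is assumed to be metrizable, it is therefore separable. Thus the burden of the proof is the ``completely metrizable'' part.

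At this point I would invoke Christensen's theorem (see, e.g., the general reference \cite{JR}): a separable metric space is Polish if and only if it admits an usco map from $\Sigma$ whose images cover the space and whose associated family is compact--covering (equivalently, if and only if it is the image of $\Sigma$ under a compact--covering continuous surjection). The representation produced by Proposition~\ref{P:Kanalytic} is precisely of this form, so $A$ is Polish.

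The main obstacle, if one wished to avoid citing Christensen, would be to prove directly that $A$ is completely metrizable, for instance by showing that $A$ is a $G_\delta$--subset of its metric completion $\widehat{A}$. Using the double sequence $(B_s)_{s\in S}$ constructed in the proof of Proposition~\ref{P:Kanalytic}, one would try to write $\widehat{A}\setminus A$ as an $F_\sigma$ in $\widehat{A}$; the compact--covering clause is exactly what is needed to rule out Cauchy sequences in $A$ that escape every $F(\sigma)$ and to force every ``limit point'' missing from $A$ to be isolated from $\bigcap_n B_{\sigma|n}$ for all $\sigma$. This direct route is, in essence, a re-derivation of Christensen's theorem and is the one nontrivial step; invoking the theorem is much cleaner, so that is the approach I would present.
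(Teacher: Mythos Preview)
Your argument is correct and follows essentially the same route as the paper: first invoke Proposition~\ref{P:Kanalytic} to obtain that $A$ is strongly $\mathcal{K}$--analytic and metrizable, and then cite an external result for the implication ``strongly $\mathcal{K}$--analytic $+$ metrizable $\Rightarrow$ Polish''. The paper appeals to \cite[Prop.~1.11.1]{MS}, whereas you appeal directly to Christensen's theorem; these are essentially the same (the result in \cite{MS} is itself an application of Christensen's characterization), so there is no substantive difference. One minor bibliographic point: Christensen's theorem is more naturally found in \cite{Chr} (cf.\ the paper's use of \cite[Th.~3.3]{Chr} in Lemma~\ref{L:Polish}) rather than in \cite{JR}.
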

\begin{proof}
It is immediate from Prop.\ref{P:Kanalytic} that $A$ is a strongly 
$\mathcal{K}$--analytic and metrizable. Then the result follows from \cite[Prop. 1.11.1]{MS}.
\end{proof}

We proceed to prove a characterization of strongly $\mathcal{K}$--analytic spaces  analogous to the Choquet 
characterization of $\mathcal{K}$--analytic spaces. We need some results that seems to have independent interest.
\begin{prop} \label{P:intersection}
Let $(A_i)_i$ be a sequence of (strongly) $\K$ subsets of a topological space $X$. 
Then $A=\bigcap_{i=1}^\infty A_i$ is 
a (strongly) $\K$ subset of $X$.
\end{prop}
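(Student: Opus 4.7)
The plan is to produce a witnessing family for the intersection by concatenating the witnessing families for the individual $A_i$'s, which only requires a bookkeeping reindexation $\mathbb{N} \leftrightarrow \mathbb{N}\times \mathbb{N}$.

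To begin, for each $i \in \N$ fix a countable family $\{K^{(i)}_{n,m} : n, m \in \N\}$ of compact subsets of $X$ witnessing that $A_i$ is $\K$ (respectively strongly $\K$), so that $A_i = \bigcap_{n=1}^\infty \bigcup_{m=1}^\infty K^{(i)}_{n,m}$. Choose a bijection $\phi \colon \N \to \N \times \N$ and write $\phi(p) = (i(p), n(p))$. Define the candidate witnessing family for $A$ by setting $L_{p,m} = K^{(i(p))}_{n(p), m}$ for $p, m \in \N$.

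Next I would verify condition (i) of Definition \ref{D:SKcd}. Since $\phi$ is a bijection onto $\N \times \N$, we have
\[
\bigcap_{p=1}^\infty \bigcup_{m=1}^\infty L_{p,m} \;=\; \bigcap_{i,n \in \N} \bigcup_{m=1}^\infty K^{(i)}_{n,m} \;=\; \bigcap_{i=1}^\infty A_i \;=\; A,
\]
which already gives the $\K$ part of the statement.

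For the strongly $\K$ part I would check condition (ii) for the family $(L_{p,m})$. Let $K$ be a compact subset of $A$ and fix $p \in \N$; set $(i,n) = \phi(p)$. Since $A \subseteq A_i$, the set $K$ is a compact subset of $A_i$, so by condition (ii) applied to the strongly $\K$ family witnessing $A_i$ there exists $m \in \N$ with $K \subseteq K^{(i)}_{n,m} = L_{p,m}$, as required. No genuine obstacle arises here; the only thing that could go wrong is accidentally mixing up which $n$-coordinate one is ranging over, which is why I would be explicit that the single index $p$ simultaneously encodes the pair $(i,n)$, and that the freedom to choose $m$ is preserved in the reindexed family.
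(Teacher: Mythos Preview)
Your proof is correct and is essentially the same as the paper's own argument: both observe that $\bigcap_{i} A_i = \bigcap_{i,n}\bigcup_{m} K^{(i)}_{n,m}$ and then reindex the countable family of pairs $(i,n)$ by a single natural number, with the strongly $\K$ clause following immediately since any compact $K\subseteq A$ lies in every $A_i$. The only difference is that you spell out the bijection $\phi\colon \N \to \N\times\N$ explicitly, whereas the paper leaves this reindexation implicit.
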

\begin{proof}
Given $i \in \N$ there exists a double sequence $(K_{n, m}^i)$ of compact subsets of $X$ with  
$A_i=\bigcap_{n=1}^\infty \bigcup_{m=1}^\infty K_{n, m}^i$ 
(such that for each pair $i$, $n\in \N$ and for each compact subset $L$ of $A_i$ there exists 
$m \in \N$ such that $L \subseteq K_{n, m}^i$). 
Then $\bigcap_{i=1}^\infty A_i=\bigcap_{i,n}\bigcup_{m=1}^\infty K_{n, m}^i$ 
and the conclusion follows.
\end{proof}

\begin{prop}\label{P:hemi}
Let $X$ be a topological space, $A$, $Y$ subsets of $X$ such that $A\subseteq Y$ and $A$ 
is a (strongly) $\K$ subset of $X$. 
If $Y$ is a (hemicompact subset of $X$) $\sigma$--compact subset of $X$, 
then $A$ is a (strongly) $\K$ subset of $Y$.
\end{prop}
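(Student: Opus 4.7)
The plan is to convert the witnessing family for $A$ as a (strongly) $\K$ subset of $X$ into a family that lives inside $Y$, by intersecting it with the compact pieces that decompose $Y$.

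I would start by fixing a double sequence $(K_{n,m})_{n,m}$ of compact subsets of $X$ with $A=\bigcap_n\bigcup_m K_{n,m}$ (satisfying also condition (ii) of Def.~\ref{D:SKcd} in the strong case), together with compact sets $(D_k)_k$ in $X$ with $Y=\bigcup_k D_k$; in the strongly $\K$ case the sequence $(D_k)$ moreover witnesses hemicompactness of $Y$. Since $X$ is Hausdorff, the sets
\[
L_{n,(m,k)} := K_{n,m}\cap D_k
\]
are compact subsets of $Y$, and after re-indexing $(m,k)\in\N\times\N$ as a single natural number this yields a genuine double sequence of compact subsets of $Y$.

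Next I would verify the identity $A=\bigcap_n\bigcup_{m,k} L_{n,(m,k)}$. The inclusion ``$\subseteq$'' is immediate from $A\subseteq Y=\bigcup_k D_k$ together with $A\subseteq\bigcup_m K_{n,m}$ for every $n$; the reverse inclusion follows since $L_{n,(m,k)}\subseteq K_{n,m}$. This already gives the $\K$ version of the statement, for which only $\sigma$-compactness of $Y$ was used.

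For the strongly $\K$ version, the main (and only non-obvious) step is to check condition (ii) of Def.~\ref{D:SKcd} for the new family. Given a compact $L\subseteq A$ and $n\in\N$, the strongly $\K$ hypothesis on $A$ in $X$ supplies $m$ with $L\subseteq K_{n,m}$, while the hemicompactness of $Y$ supplies $k$ with $L\subseteq D_k$; hence $L\subseteq L_{n,(m,k)}$. The crux — and the reason hemicompactness, rather than mere $\sigma$-compactness, is demanded of $Y$ in this case — is exactly that a \emph{single} member of $(D_k)$ must absorb the entire compact set $L$, which is precisely the defining feature of hemicompactness.
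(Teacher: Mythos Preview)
Your proof is correct and is essentially identical to the paper's argument: the paper also intersects the witnessing family $(K_{n,m})$ for $A$ with the compact pieces $(\Omega_l)$ of $Y$ and observes that $A=A\cap Y=\bigcap_n\bigcup_{m,l}(K_{n,m}\cap\Omega_l)$. Your write-up is actually more detailed than the paper's, which simply says ``the conclusion follows'' without spelling out the role of hemicompactness in the strong case.
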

\begin{proof}
It is enough to notice the following.
Let $K_{n, m}$ and $\Omega_{l}$, $n$, $m$, $l\in \N$ be compact subsets of $X$ such that 
$ A=\bigcap_{n=1}^\infty \bigcup _{m=1}^\infty K_{n, m}$  
and $Y=\bigcup_{l=1}^\infty \Omega_l$.
Then $A=A\bigcap Y =\bigcap_{n}\bigcup_{m, l}\left( K_{n, m}\bigcap \Omega_l \right)$
and the conclusion follows.
\end{proof}
In the sequel we prove that the product of two (strongly) $\K$ subsets is also 
(strongly) $\K$, hence inductively we have that the class of (strongly) 
$\K$ subsets is closed under finite products. 
Moreover, if we consider a sequence $(A_i)$ of (strongly) $\K$ subsets of compact spaces 
$(X_i)$ respectively, then the product $\prod_{i=1}^\infty A_i$ is a
(strongly) $\K$ subset of the space $\prod_{i=1}^\infty X_i$.
\begin{prop}\label{finiteprod}
Let $A_1$, $A_2$ be (strongly) $\K$ subsets of the topological spaces $X_1$ and $X_2$ respectively. Then the set 
$A_1\times A_2$ is a (strongly) $\K$ subset of the topological space $X_1 \times X_2$.
\end{prop}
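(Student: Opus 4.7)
The plan is to express $A_1 \times A_2$ directly using the given representations of $A_1$ and $A_2$ and then verify the two conditions of Definition \ref{D:SKcd}. Write $A_i = \bigcap_{n=1}^\infty \bigcup_{m=1}^\infty K_{n,m}^i$ for $i=1,2$, where, in the ``strongly'' case, each family additionally satisfies the absorption property (ii). Because $\N \times \N$ is countable, I will re-index the double families so that the candidate witnessing family for $A_1\times A_2$ is
\[
L_{(n_1,n_2),(m_1,m_2)} \;=\; K_{n_1,m_1}^1 \times K_{n_2,m_2}^2,
\]
which consists of compact subsets of $X_1\times X_2$.

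The first step is to verify condition (i), i.e.\ that
\[
A_1\times A_2 \;=\; \bigcap_{(n_1,n_2)} \bigcup_{(m_1,m_2)} K_{n_1,m_1}^1 \times K_{n_2,m_2}^2.
\]
This is a straightforward unpacking of quantifiers: $(x,y)$ belongs to the right-hand side iff for every pair $(n_1,n_2)$ there exist $m_1,m_2$ with $x\in K_{n_1,m_1}^1$ and $y\in K_{n_2,m_2}^2$, which, since the choice of $m_1$ depends only on $n_1$ (and similarly for $m_2,n_2$), is equivalent to $x\in A_1$ and $y\in A_2$.

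For the ``strongly'' case, I verify condition (ii). Let $K\subseteq A_1\times A_2$ be compact and fix $(n_1,n_2)\in \N\times\N$. The coordinate projections $\pi_i \colon X_1\times X_2 \to X_i$ are continuous, so $\pi_1(K)$ and $\pi_2(K)$ are compact subsets of $A_1$ and $A_2$ respectively. Applying the strongly $\K$ property of $A_i$, choose $m_i \in \N$ with $\pi_i(K)\subseteq K_{n_i,m_i}^i$. Then
\[
K \;\subseteq\; \pi_1(K)\times \pi_2(K) \;\subseteq\; K_{n_1,m_1}^1\times K_{n_2,m_2}^2,
\]
which is exactly condition (ii) for the relabelled family.

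There is no real obstacle here; the argument is essentially combinatorial bookkeeping together with the elementary fact that continuous images of compact sets are compact. The only mild point to watch is that one must not try to pick a single index $m$ simultaneously controlling both coordinates — the use of a pair $(m_1,m_2)$ is what keeps the two factors independent and makes the distributive identity in step one (and the absorption in step three) work cleanly.
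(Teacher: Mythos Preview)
Your proof is correct and follows essentially the same approach as the paper: both build the witnessing family from products $K_{n_1,m_1}^1\times K_{n_2,m_2}^2$. The only cosmetic difference is that the paper indexes the outer intersection by a single $n$ (taking the diagonal $n_1=n_2=n$) rather than by pairs $(n_1,n_2)$, and it states only the $\K$ case explicitly, leaving the strongly $\K$ verification---your projection argument---to the reader.
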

\begin{proof}
It is enough to prove only the case when $A_1$, $A_2$ are $\K$ sets.
For each $i=1,2$ there exists a double sequence $(K_{n, m}^i)_{n, m}$ of compact subsets of $X_i$ which satisfies 
the requirments for a $\K$ set. 
For each $n\in \N$ we consider the countable family 
$\{K_{n, m_1}^1 \times K_{n, m_2}^2 : m_1 , m_2 \in \N \}$ of compact subsets of $X_1 \times X_2$. 
Then $A_1 \times A_2= \bigcap_{n=1}^\infty \bigcup \{K_{n, m_1}^1 \times K_{n, m_2}^2 : m_1, m_2 \in \N \}$
and the conclusion follows immediately.
\end{proof}
\begin{prop}
Let $A_i$ be a (strongly) $\K$ subset of a compact space $X_i$ for each $i \in \N$. Then the set 
$\prod_{i=1}^\infty A_i$ is a (strongly) $\K$ subset of the space $\prod_{i=1}^\infty X_i$.
\end{prop}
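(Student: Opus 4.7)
The plan is to reduce the countable product to a countable intersection of sets that live in $\prod_{j=1}^\infty X_j$ and are already known to be (strongly) $\K$, then apply Proposition \ref{P:intersection}.

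For each $i \in \N$, let $(K^i_{n,m})_{n,m}$ be a double sequence of compact subsets of $X_i$ witnessing that $A_i$ is (strongly) $\K$ in $X_i$, so $A_i=\bigcap_n \bigcup_m K^i_{n,m}$. Let $\pi_i \colon \prod_{j=1}^\infty X_j \to X_i$ denote the projection. Since every $X_j$ is compact, the set $\pi_i^{-1}(K^i_{n,m})=K^i_{n,m}\times\prod_{j\neq i}X_j$ is compact by Tychonoff. Therefore
\[
\pi_i^{-1}(A_i)\;=\;\bigcap_{n=1}^\infty\bigcup_{m=1}^\infty \pi_i^{-1}(K^i_{n,m})
\]
exhibits $\pi_i^{-1}(A_i)$ as a $\K$ subset of $\prod_{j=1}^\infty X_j$.

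In the strongly $\K$ case, I would verify that the family $\{\pi_i^{-1}(K^i_{n,m}):n,m\in\N\}$ also fulfils condition (ii) of Def. \ref{D:SKcd}: given a compact subset $K$ of $\pi_i^{-1}(A_i)$, the image $\pi_i(K)$ is a compact subset of $A_i$, so for each $n\in\N$ there exists $m\in\N$ with $\pi_i(K)\subseteq K^i_{n,m}$; hence $K\subseteq \pi_i^{-1}(\pi_i(K))\subseteq \pi_i^{-1}(K^i_{n,m})$. Thus each $\pi_i^{-1}(A_i)$ is strongly $\K$ in $\prod_j X_j$.

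Finally, since
\[
\prod_{i=1}^\infty A_i \;=\;\bigcap_{i=1}^\infty \pi_i^{-1}(A_i),
\]
an application of Proposition \ref{P:intersection} to the sequence $(\pi_i^{-1}(A_i))_{i\in\N}$ yields that $\prod_{i=1}^\infty A_i$ is a (strongly) $\K$ subset of $\prod_{i=1}^\infty X_i$. The only non-routine point is the verification of clause (ii) for the pulled-back family, and that is immediate from the continuity of $\pi_i$; compactness of $X_j$ for $j\neq i$ is used only to ensure that the pulled-back sets remain compact in the product, which is why the hypothesis that each $X_i$ be compact is genuinely needed here.
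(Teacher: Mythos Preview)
Your proof is correct and follows essentially the same strategy as the paper: express $\prod_i A_i$ as a countable intersection of (strongly) $\K$ cylinder sets in the compact product and invoke Proposition~\ref{P:intersection}. The only cosmetic difference is that the paper uses the nested cylinders $\prod_{i=1}^{n}A_i \times \prod_{i>n}X_i$ (appealing to Proposition~\ref{finiteprod} for the finite factor), whereas you work one coordinate at a time with $\pi_i^{-1}(A_i)$ and verify the (strongly) $\K$ property directly---a slightly more economical route that avoids the detour through finite products.
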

\begin{proof}
Let us prove this time the strongly $\K$ case.
For each $n\in\mathbb{N}$ by Prop. \ref{finiteprod} the set $\prod_{i=1}^{n}A_i$ is a strongly $\K$ subset 
of the space $\prod_{i=1}^{n}X_i$. The space $\prod_{i=n+1}^{\infty}X_i$ 
is compact, so the set $\prod_{i=1}^{n}A_i \times \prod_{i=n+1}^{\infty}X_i$ 
is a strongly $\K$ subset of the topological space $\prod_{i=1}^{n}X_i \times \prod_{i=n+1}^{\infty}X_i$, 
that is, of the space $\prod_{i=1}^{\infty}X_i$. By Proposition \ref{P:intersection}, 
a countable intersection of strongly $\K$ subsets of a topological space is a strongly $\K$ 
subset, so the set 
\[
\bigcap_{n=1}^\infty\left( \prod_{i=1}^{n}A_i \times \prod_{i=n+1}^{\infty}X_i\right) 
\]
is a strongly $\K$ subset of $\prod_{i=1}^{\infty}X_i$. Furthermore 
\[ \bigcap_{n=1}^\infty\left( \prod_{i=1}^{n}A_i \times \prod_{i=n+1}^{\infty}X_i\right)=\prod_{i=1}^{\infty}A_i\, ,
\]
and the conclusion follows.
\end{proof}
\begin{rem}
The above result fails to be true if compactness of $X_i$ is omitted from the hypotheses. Indeed, let 
$A_i =\mathbb{N}$ and $X_i =\mathbb{R}$ for each $i \in \mathbb{N}$. Then 
$\prod_{i=1}^\infty A_i=\mathbb{N}^\mathbb{N}=\varSigma \subseteq \prod_{i=1}^\infty X_i=\mathbb{R}^\mathbb{N}$.
But the Baire space $\varSigma$  is not a $\K$ subset of $\mathbb{R}^\mathbb{N}$. If it was, then it would be $\sigma$--compact, which contradicts Baire's theorem. We note that a $\K$ subset of a topological space $X$ is contained in some 
$\sigma$--compact subset of $X$ and the space $\mathbb{N}^\mathbb{N}$ is a closed subspace of $\mathbb{R}^\mathbb{N}$.
\end{rem}
\begin{thm} \label{T:compactification} 
Let $M$ be a Polish subspace of a compact space $K$. Then $M$ 
is a strongly $\K$ subset of $K$.
\end{thm}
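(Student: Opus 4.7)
The plan is to first reduce to the case where $M$ is dense in $K$. Replacing $K$ by the compact set $\overline{M}^K$ loses nothing: any witness $\{K_{n,m}\}$ for $M$ being strongly $\K$ in $\overline{M}^K$ is also a witness in $K$, since its members remain compact in $K$ and condition (ii) of Definition \ref{D:SKcd} only concerns compact subsets of $M$ itself. So I may assume $K$ is a compactification of $M$.

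Since $M$ is Polish it is \v{C}ech complete, hence a $G_\delta$ subset of $K$. Write $M = \bigcap_{n=1}^\infty W_n$ with each $W_n$ open in $K$. For each $n$ I would exploit the regularity of the compact Hausdorff space $K$: every $x \in M \subseteq W_n$ has an open neighbourhood $V$ in $K$ with $\overline{V}^K \subseteq W_n$. The resulting open cover of $M$ admits a countable subcover $\{V_{n,k}\}_{k\in\N}$ (since $M$, being second countable, is Lindel\"of), and I set
\[
C_{n,k} := \overline{V_{n,k}}^K, \qquad K_{n,m} := \bigcup_{k=1}^m C_{n,k}.
\]
Each $K_{n,m}$ is compact in $K$ and contained in $W_n$, and $M \subseteq \bigcup_{k=1}^\infty C_{n,k}$ for every $n$.

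Verifying the two conditions of Definition \ref{D:SKcd} is then routine. For (i), the inclusion $M \subseteq \bigcap_n\bigcup_m K_{n,m}$ follows from $M \subseteq \bigcup_k C_{n,k}$, while the reverse inclusion follows from $\bigcup_m K_{n,m} \subseteq W_n$ and $\bigcap_n W_n = M$. For (ii), given a compact $L \subseteq M$ and $n \in \N$, the open cover $\{V_{n,k}\cap M\}_k$ of $L$ has a finite subcover, whence $L \subseteq K_{n,N}$ for some $N$. The delicate point, and the reason the \v{C}ech-completeness representation is crucial, is the ``$\subseteq$'' half of (i): without trapping each $K_{n,m}$ inside $W_n$, the intersection over $n$ could pick up stray points of $K \sm M$.
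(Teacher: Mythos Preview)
Your proof is correct and follows essentially the same route as the paper: reduce to $M$ dense, represent $M$ as a $G_\delta$ in $K$, use regularity of $K$ to cover $M$ by open sets with closures inside each $W_n$, extract countable subcovers by Lindel\"of, and take finite unions. The only cosmetic difference is that you invoke the black-box fact that Polish spaces are \v{C}ech complete to obtain $M=\bigcap_n W_n$, whereas the paper constructs this $G_\delta$ representation directly from a complete metric (taking $V_n(x)$ of diameter $\le 1/n$); the paper's Remark~\ref{R:Cech} makes explicit that the argument works verbatim for \v{C}ech complete Lindel\"of spaces, which is exactly your formulation.
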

\begin{proof}
It is clear that we can assume that $M$ is dense in $K$. Consider a complete metric $d$, that induces the topology of $M$. 
For each $n \in \mathbb{N}$ and for each $x\in M$ choose an open subset $V_n(x)$ of $K$ such that 
$x \in V_n(x)$ and $\diam (V_n(x)\cap M)\leq \frac{1}{n}$. Define $V_n=\bigcup_{x\in M}V_n(x)$. 
It is rather easy to see from completeness 
of $(M, d)$ that $M=\bigcap_{n=1}^\infty V_n$ (see \cite[Lemma 2, p. 40]{Chr}). 
For each $x\in M$ consider an open subset $W_n(x)$ of  $K$ such that 
	\[x\in W_n(x)\subseteq \overline{W_n(x)}\subseteq V_n(x) \, .
\]
The space $M$ is a seperable metric space, hence Lindel\"of. Consequently the open covering 
$\{\, V_n(x): x\in M\}$ of $M$ has a countable subcovering. Then for each 
$n\in\mathbb{N}$ choose a sequence $(x_{n,m})_m$ of $M$ such that 
$M\subseteq\bigcup_{m=1}^\infty W_n(x_{n, m}) $.
Then
	$M\subseteq \bigcap_{n=1}^\infty\bigcup_{m=1}^\infty\overline{W_n(x_{n,m})}\subseteq\bigcap_{n=1}^\infty\bigcup_{m=1}^\infty V_n(x_{n,m})\subseteq M$, hence 
$M= \bigcap_{n=1}^\infty\bigcup_{m=1}^\infty\overline{W_n(x_{n,m})}$.
For each $n, m\in\mathbb{N}$ put 
$G_{n, m}=\bigcup_{k=1}^m\overline{W_n(x_{n,k})}$ and then
$M= \bigcap_{n=1}^\infty\bigcup_{m=1}^\infty G_{n, m}$.
Let $L$ be a compact subset of $M$ and $n\in\mathbb{N}$. Then the open covering 
$\left(W_n(x_{n, m})\right)_{m=1}^\infty$ of $L$ has a finite subcovering, hence there exists 
$m\in\mathbb{N}$ such that 
$L\subseteq\bigcup_{k=1}^m\overline{W_n(x_{n,k})}$, so $L\subseteq G_{n, m}$ 
and the proof is complete.
\end{proof}
\begin{rem}\label{R:Cech}
A similar argument proves that every \v{C}ech complete and Lindel\"of space $X$ 
is strongly $\K$ in every compact superspace of $X$ (cf. the proof of \cite[Th. 3.9.2]{E}).
\end{rem}
We recall that a continuous map $f \colon X \to Y$ is called compact covering if for every compact subset $L$ 
of $Y$ there exists a compact subset $K$ of $X$ such that $f(K)=L$. 
It is clear that a compact covering map is surjective.
\begin{thm}\label{T:Choquet}
Let $X$ be a topological space. The following are equivalent:
\begin{enumerate}[\upshape (i)]
	\item $X$ is strongly $\mathcal{K}$--analytic.
	\item There exists a compact topological space $\Omega$, a strongly $\K$ subset $C$ of 
$\Omega$ and a compact covering map $f \colon C \to X$.
  \end{enumerate}
\end{thm}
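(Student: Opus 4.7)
The plan is to follow the template of Choquet's characterization of $\mathcal{K}$-analytic spaces, adapted to the strong version. The direction (ii) $\Rightarrow$ (i) will use Proposition \ref{P:Kanalytic} to extract an usco witness of strong $\mathcal{K}$-analyticity inside $C$ and then push it forward by $f$. The converse will be realised by taking the graph of an usco witness for $X$ and showing it is strongly $\K$ in a suitable compact superspace; the main technical point is the strongly $\K$ property of the graph, which will be obtained as an intersection of two strongly $\K$ sets via Proposition \ref{P:intersection}.

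For (ii) $\Rightarrow$ (i), by Proposition \ref{P:Kanalytic} the space $C$ in its relative topology is strongly $\mathcal{K}$-analytic, so there is an usco map $F_C \colon \Sigma \to \mathcal{K}(C)$ with $F_C(\Sigma) = C$ such that every compact $L \subseteq C$ lies in some $F_C(\sigma)$. Setting $F_X = f \circ F_C$, the composition of an usco map with a continuous map is usco, $F_X(\Sigma) = f(C) = X$, and for compact $L \subseteq X$ the compact covering property of $f$ furnishes a compact $K \subseteq C$ with $f(K) = L$; picking $\sigma$ with $K \subseteq F_C(\sigma)$ gives $L \subseteq F_X(\sigma)$, so $X$ is strongly $\mathcal{K}$-analytic.

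For (i) $\Rightarrow$ (ii), let $F \colon \Sigma \to \mathcal{K}(X)$ be an usco witness of strong $\mathcal{K}$-analyticity. Fix a compactification $\gamma X$ of $X$, let $\bar{\Sigma} = \bar{\N}^{\N}$ be the coordinatewise one-point compactification of $\Sigma$, and set $\Omega = \bar{\Sigma} \times \gamma X$. Take
\[ C := G = \{(\sigma, x) \in \Sigma \times X : x \in F(\sigma)\} \subseteq \Omega \]
and let $f = \pi_2|_G$. Continuity and surjectivity of $f$ are clear; compact coveringness follows because if $L \subseteq X$ is compact, strong $\mathcal{K}$-analyticity gives $\sigma_0$ with $L \subseteq F(\sigma_0)$, and then $\{\sigma_0\} \times L$ is a compact subset of $G$ mapped onto $L$ by $f$.

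The substantive task is to show $G$ is strongly $\K$ in $\Omega$. The Polish space $\Sigma$ is strongly $\K$ in $\bar{\Sigma}$ by Theorem \ref{T:compactification}, and $\gamma X$ is trivially strongly $\K$ in itself, so Proposition \ref{finiteprod} makes $\Sigma \times \gamma X$ strongly $\K$ in $\Omega$. On the other hand, $\overline{G}^{\Omega}$ is closed in the compact space $\Omega$, hence compact, hence trivially strongly $\K$ in $\Omega$. Proposition \ref{P:intersection} then yields the conclusion provided one establishes the identity
\[ G = \overline{G}^{\Omega} \cap (\Sigma \times \gamma X). \]
The inclusion $\subseteq$ is immediate; for $\supseteq$ take $(\sigma, y)$ on the right-hand side and a net $(\sigma_\alpha, x_\alpha) \in G$ with $(\sigma_\alpha, x_\alpha) \to (\sigma, y)$ in $\Omega$. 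For any $V$ open in $X$ with $F(\sigma) \subseteq V$ the upper semicontinuity of $F$ yields an $\bar{\Sigma}$-neighbourhood $U$ of $\sigma$ with $F(U \cap \Sigma) \subseteq V$, so $x_\alpha \in V$ eventually and hence $y \in \overline{V}^{\gamma X}$. By normality of the compact Hausdorff space $\gamma X$, if $y \notin F(\sigma)$ one can choose such a $V$ additionally satisfying $y \notin \overline{V}^{\gamma X}$; the resulting contradiction forces $y \in F(\sigma) \subseteq X$, so $(\sigma, y) \in G$. Verifying this last identity is the step I expect to demand the most care.
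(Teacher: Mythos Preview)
Your proof is correct. Both directions work, and the normality argument for the identity $G=\overline{G}^{\Omega}\cap(\Sigma\times\gamma X)$ is clean: since $F(\sigma)$ is compact in $X$, it is closed in the compact Hausdorff space $\gamma X$, and separating it from $y$ by disjoint $\gamma X$-open sets gives the required $V$ open in $X$ with $y\notin\overline{V}^{\gamma X}$.

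The paper follows the same template but outsources the work. For (ii)$\Rightarrow$(i) it cites \cite[Prop.~3.4]{KM} (compact covering images of strongly $\mathcal{K}$-analytic spaces are strongly $\mathcal{K}$-analytic), which is exactly the push-forward argument you wrote out. For (i)$\Rightarrow$(ii) it quotes \cite[Prop.~3.2]{KM}, which produces $X$ as a compact covering image of a \emph{closed} subset $C$ of a product $M\times K$ with $M$ Polish and $K$ compact; then Theorem~\ref{T:compactification} and Proposition~\ref{finiteprod} make $M\times K$ strongly $\K$ in $K_0\times K$, and Remark~\ref{R:Kcd}(ii) handles the closed subset. Your graph construction is precisely what lies behind \cite[Prop.~3.2]{KM} (with $M=\Sigma$, $K=\gamma X$), and your identity $G=\overline{G}^{\Omega}\cap(\Sigma\times\gamma X)$ is the statement that the graph of an usco map into a compact range is relatively closed over $\Sigma$. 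So the two proofs are structurally the same; yours is fully self-contained within the present paper, at the cost of writing out the closedness-of-the-graph argument, while the paper's version is shorter but depends on the external reference.
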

\begin{proof}
\textbf{(ii) $\Rightarrow $ (i)} It is an immediate consequence of Prop. \ref{P:Kanalytic} and \cite[Prop. 3.4]{KM}

\textbf{(i) $\Rightarrow $ (ii)} From \cite[Prop. 3.2]{KM} 
$X$ is the image through a compact covering map $f$ of a closed subset $C$ of a space 
$M \times K$, where $M$ is a Polish space and $K$ is a compact space. Let $K_0$ be 
a compactification of $M$. From Th. \ref{T:compactification}  $M$ 
is a strongly $\K$ subset of  $K_0$, consequently by 
Prop. \ref{finiteprod} the space $M \times K$ is a strongly $\K$ subset of the compact space 
$\Omega =K_0 \times K$. It follows that $C$ is a strongly $\K$ subset of $\Omega$, as a closed subset of 
$M \times K$. 
(cf. Remarks \ref{R:Kcd} and \ref{R:Cech}.)
\end{proof}
The following result is analogous to \cite[Th. 1.12]{MS} and \cite[Th. 3.1]{KM}.
We recall that if $X$ is any topological space then the Vietoris topology on $\mathcal{K}(X)$ has a basis 
consisting of sets of the form 
\[\beta (V_1,\ldots, V_n)=\left\{K\in 
	\mathcal{K}(X): K\subseteq\bigcup_{i=1}^n V_i\; \text{and}\; K\cap V_i\neq
	\emptyset \; \text{for} \; i=1,\ldots,n\right\},
\]
where $n\in \mathbb{N}$ and $V_1,\ldots, V_n$ are open non empty subsets of $X$ 
(cf. \cite[p. 162]{E}).
\begin{thm}\label{T:Vietoris}
Let $X$ be a topological space and $A \subseteq X$. The following are equivalent:
\begin{enumerate}[\upshape (i)]
	\item $A$ is a strongly $\K$ subset of $X$.
	\item $\mathcal{K}(A)$ is a strongly $\K$ subset of $\mathcal{K}(X)$.
	\item $\mathcal{K}(A)$ is a $\K$ subset of $\mathcal{K}(X)$.
\end{enumerate}
\end{thm}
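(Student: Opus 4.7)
I plan to prove the equivalence via the cycle $(i)\Rightarrow(ii)\Rightarrow(iii)\Rightarrow(i)$.

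For $(i)\Rightarrow(ii)$, given a family $\{K_{n,m}\}$ that witnesses $A$ being strongly $\K$ in $X$, the natural lift to $\mathcal{K}(X)$ is $\mathcal{L}_{n,m}:=\mathcal{K}(K_{n,m})$. Each $\mathcal{L}_{n,m}$ is compact in $\mathcal{K}(X)$ (with the Vietoris topology) because $K_{n,m}$ is compact in $X$. The identity $\mathcal{K}(A)=\bigcap_n\bigcup_m\mathcal{L}_{n,m}$ is checked using condition (ii) applied to $A$: any $L\in\mathcal{K}(A)$ is contained in some $K_{n,m}$, whence $L\in\mathcal{L}_{n,m}$; conversely, any set in the intersection lies in $\bigcap_n\bigcup_m K_{n,m}=A$. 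For condition (ii) of Def.~\ref{D:SKcd} applied to $\{\mathcal{L}_{n,m}\}$, given a compact $\mathcal{L}\subseteq\mathcal{K}(A)$, the set $\bigcup\mathcal{L}$ is a compact subset of $A$ (as the image of $\mathcal{L}$ under the continuous union map $\bigcup\colon\mathcal{K}(\mathcal{K}(X))\to\mathcal{K}(X)$), so condition (ii) for $A$ gives some $m$ with $\bigcup\mathcal{L}\subseteq K_{n,m}$, forcing $\mathcal{L}\subseteq\mathcal{K}(K_{n,m})=\mathcal{L}_{n,m}$. The implication $(ii)\Rightarrow(iii)$ is immediate from Remark~\ref{R:Kcd}(i).

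The key step is $(iii)\Rightarrow(i)$. Given $\mathcal{K}(A)=\bigcap_n\bigcup_m\mathcal{A}_{n,m}$ with $\mathcal{A}_{n,m}$ compact in $\mathcal{K}(X)$, I would set $K_{n,m}:=\bigcup\mathcal{A}_{n,m}$, which is compact in $X$ by continuity of the union map. Condition (ii) of Def.~\ref{D:SKcd} is straightforward: any $L\in\mathcal{K}(A)$ lies in some $\mathcal{A}_{n,m}$ by the $\K$ decomposition, hence $L\subseteq K_{n,m}$. For condition (i), the inclusion $A\subseteq\bigcap_n\bigcup_m K_{n,m}$ follows at once by applying the previous observation to singletons $\{x\}$ with $x\in A$. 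The reverse inclusion is the main technical obstacle, since a point $y\in\bigcap_n\bigcup_m K_{n,m}$ only comes with some $L_n\in\mathcal{A}_{n,m_n}$ containing $y$, and one must promote this to $\{y\}\in\mathcal{K}(A)$. My plan is to pass to the hereditary enlargement $\widetilde{\mathcal{A}}_{n,m}:=\mathcal{K}(K_{n,m})$ (still compact in $\mathcal{K}(X)$) and exploit the singleton embedding $\phi\colon X\hookrightarrow\mathcal{K}(X)$, $\phi(x)=\{x\}$; since $X$ is Hausdorff, $\phi$ is a homeomorphism onto a closed subspace of $\mathcal{K}(X)$, and the interplay between $\phi(X)$, the original $\mathcal{A}_{n,m}$'s, and their hereditary closures is where the technical content of this direction lies, allowing one to transfer the containment $\{y\}\subseteq L_n$ at the level of $\mathcal{K}(X)$ back to the required membership $\{y\}\in\mathcal{A}_{n,m}$ at the level of the original decomposition.
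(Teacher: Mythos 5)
Your implications (i)$\Rightarrow$(ii) and (ii)$\Rightarrow$(iii) are correct and coincide with the paper's argument: the witnesses $\mathcal{K}(K_{n,m})$, the identity $\mathcal{K}(A)=\bigcap_n\bigcup_m\mathcal{K}(K_{n,m})$, and the use of the compact set $\bigcup\mathcal{L}\subseteq A$ to dominate a compact $\mathcal{L}\subseteq\mathcal{K}(A)$ are exactly what the paper does. The problem is (iii)$\Rightarrow$(i), where you have correctly located the difficulty but not resolved it.

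The gap is the inclusion $\bigcap_n\bigcup_m K_{n,m}\subseteq A$ with $K_{n,m}=\bigcup\mathcal{A}_{n,m}$, and the plan you sketch does not close it. Replacing $\mathcal{A}_{n,m}$ by the hereditary enlargement $\mathcal{K}(K_{n,m})$ only loses information: in general $\bigcap_n\bigcup_m\mathcal{K}(K_{n,m})$ is strictly larger than $\mathcal{K}(A)$, and the assertion that its singleton part equals $\phi(A)$ is precisely the statement you are trying to prove, so the reduction is circular. Nor can you hope to ``transfer back'' to $\{y\}\in\mathcal{A}_{n,m}$: the sets $\mathcal{A}_{n,m}$ need not contain any singletons at all. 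The missing idea is a compactness argument in the hyperspace itself. Given $y\in\bigcap_n\bigcup_m K_{n,m}$, choose $m_n$ and $L_n\in\mathcal{A}_{n,m_n}$ with $y\in L_n$; one must then produce a \emph{cluster point} $K$ of the sequence $(L_n)$ lying in $\bigcap_{n}\mathcal{A}_{n,m_n}\subseteq\mathcal{K}(A)$ --- this is where the compactness of the $\mathcal{A}_{n,m}$ in $\mathcal{K}(X)$ and the $\K$ structure are actually used (the paper invokes \cite[Lemma 3.1]{AAM1} for exactly this step). Since $\{L\in\mathcal{K}(X):y\in L\}$ is Vietoris--closed, any such cluster point $K$ still contains $y$, whence $y\in K\subseteq A$. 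Without this limit--point extraction your argument for condition (i) of Definition \ref{D:SKcd} is incomplete; note that condition (ii) and the inclusion $A\subseteq\bigcap_n\bigcup_m K_{n,m}$, which you do prove, are the easy parts of this direction.
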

\begin{proof}
\textbf{(i)$\Rightarrow$(ii)} As $A$ is a strongly $\K$ subset of $X$, there exists a double sequence 
$K_{n,m}$, $n$, $m \in \mathbb{N}$ of compact subsets of $X$ such that 
$A=\bigcap_{n=1}^\infty \bigcup_{m=1}^\infty K_{n,m} $ and for each compact subset $L$ of
$A$ and for each $n\in \mathbb{N}$ there exists $m \in \mathbb{N}$, such that $L \subseteq K_{n,m}$. 
We shall prove that the double sequence $\mathcal{K}(K_{n,m})$, $n$, $m \in \mathbb{N}$ 
of compact subsets of $\mathcal{K}(X)$, makes $\mathcal{K}(A)$ into a strongly $\K$ subset of $\mathcal{K}(X)$.\\
Step 1. Claim:
$\mathcal{K}(A)=\bigcap_{n=1}^\infty \bigcup_{m=1}^\infty \mathcal{K}(K_{n,m})$. \\
Let $K \in \mathcal{K}(A)$. As $A$ is a strongly $\K$ subset of $X$, for each 
$n \in \mathbb{N}$ there exists $m \in \mathbb{N}$ such that $K \subseteq K_{n,m}$. 
In other words $K \in \mathcal{K}(K_{n,m})$, hence $K \in \bigcup_{m=1}^\infty \mathcal{K}(K_{n,m})$. Then
$\mathcal{K}(A) \subseteq \bigcap_{n=1}^\infty \bigcup_{m=1}^\infty \mathcal{K}(K_{n,m})$.
Assume $K \in \bigcap_{n=1}^\infty \bigcup_{m=1}^\infty \mathcal{K}(K_{n,m})$. For each 
$n\in \mathbb{N}$ there exists $m \in \mathbb{N}$, such that $K \in \mathcal{K}(K_{n,m})$, that is 
$K \subseteq K_{n,m}$, hence 
$K \subseteq \bigcap_{n=1}^\infty \bigcup_{m=1}^\infty K_{n,m}$, that is, $K \subseteq A$. Consequently 
$\bigcap_{n=1}^\infty \bigcup_{m=1}^\infty \mathcal{K}(K_{n,m}) \subseteq \mathcal{K}(A)$
and the proof of the claim is complete.\\
Step 2. Claim: 
For each $n \in \mathbb{N}$ and for each compact subset $L$ of $\mathcal{K}(A)$ there exists 
$m\in \N$ such that $L \subseteq \mathcal{K}(K_{n,m})$.\\
Let $n \in \mathbb{N}$ and $L$ a compact subset of $\mathcal{K}(A)$. Consider the set
$X_L=\bigcup L=\bigcup \{\, K \mid K \in L \, \}$ ,
that is a compact subset of $A$. Then there exists $m \in \mathbb{N}$ such that 
$X_L \subseteq K_{n,m}$. Thus for each $K \in L$ we have $K \subseteq K_{n,m}$, that is, $K \in \mathcal{K}(K_{n,m})$. 
Then $L \subseteq \mathcal{K}(K_{n,m})$ and the proof of the claim is complete.
Thus $\mathcal{K}(A)$ is a strongly $\K$ subset of $\mathcal{K}(X)$.\\
\textbf{(ii)$\Rightarrow$(iii)} It is obvious. \\
\textbf{(iii)$\Rightarrow$(i)} Let $\{\Omega_{n,m}:n,m \in \N \}$ be a double sequence of compact sets in 
$\mathcal{K}(X)$ such that $\mathcal{K}(A)=\bigcap_{n=1}^\infty\bigcup_{m=1}^\infty \Omega_{n,m}$; 
then setting $Y_s=\bigcap_{i=1}^n \Omega_{i,m_i}$ for $s=(m_1,m_2,\cdots m_n)$ we get that 
$\mathcal{K}(A)=\bigcup_\sigma \bigcap_n Y_{\sigma |n}$ and the map 
$\sigma \to \bigcap_{n=1}^\infty Y_{\sigma |n}$ is usco. For each $n$, $m \in \N$ 
the set $K_{n,m}=\bigcup\{K : K \in \Omega_{n,m} \}$ is compact in $X$. 
Now let $B=\bigcap_{n=1}^\infty\bigcup_{m=1}^\infty K_{n,m}$; it is enough to prove that $A=B$. 
It is obvious that $A\subseteq B$; let $x\in B$. Then there exist sequences 
$\sigma =(m_n) \in \Sigma$ and $(K_n) \subseteq \mathcal{K}(X)$ such that 
$x \in K_n$ and $K_n \in \Omega _{n, m_n}$ for all $n \in \N$. Then the sequence 
$(K_n)$ has a limit point, say $K \in \bigcap_{n=1}^\infty \Omega_{n, m_n}=\bigcap Y_{\sigma |n}$ 
(see \cite[Lemma 3.1]{AAM1}). Therefore $K$ is a compact subset of $A$ and $x \in K \subseteq A$.
\end{proof}
\begin{rem}
A $\K$ subset of a compact space is not necessarily a strongly $\K$ set. 
Indeed, let $X=[0, 1]$ and $A=\mathbb{Q} \cap X$; then $A$ is a (countable 
and hence) $\sigma$--compact set, but it is not a strongly 
$\mathcal{K}$--analytic space (see Prop. \ref{P:hemi} and \cite[Prop. 1.15]{MS}). 
Note that there exist countable topological spaces, which are not even 
strongly countably determined (see \cite[Remark 4.4]{KM} and \cite[Example 2.9]{MS}).
\end{rem}
\begin{quest}\label{Q:A}
Let $K$ be a compact space and $X \subseteq K $.
\begin{enumerate}[\upshape(i)]
	\item Assume that $X$ is a strongly $\mathcal{K}$--analytic subspace of $K$. Is then $X$ a strongly $\K$ or 
	at least a $\K$ subset of $K$? 
	\item Assume that $X$ is a strongly $\K$ subset of $K$. Is then $X$ strongly $\K$ in every compact space, which contains $X$ homeomorphically?
\end{enumerate}
\end{quest}
\begin{rem}
The analogous questions in the $\K$ case have both negative answers. 
Indeed, concerning the first question, consider a Borel (or an analytic non Borel) 
non $\K$ subset of $[0, 1]$. For the second, note that Talagrand (\cite{T2}) was the first 
who constructed a counterexample; still another counterexample can be found in \cite{AAM1}.
\end{rem}
\section{Banach spaces SWCG with respect to a superspace} \label{S:B}
We recall that a Banach space $X$ is called strongly weakly compactly generated (SWCG) if there exists a weakly compact subset $K$ of $X$ that strongly generates $X$, that is, for every weakly compact subset
 $L$ of $X$ and for every $\ee >0$ there exists
$n \in \mathbb{N}$ with $L \subseteq nK+\ee B_X$. The class of SWCG Banach spaces is introduced and studied 
by Schl\" uchtermann and Wheeler in \cite{SW}. In the next definition we generalize the concept 
of SWCG Banach space, considering the weakly compact subset $K$, that strongly generates $X$ in a superspace 
$Z$ of $X$. Thus, we are led to the following definition.
\begin{defin} \label{D:relSWCG}
Let $X$, $Z$ be Banach spaces with $X \subseteq Z$. We shall say that $X$ 
\emph{is strongly weakly compactly generated (SWCG) with respect to (or relatively to)} 
$Z$ if there exists a weakly compact (convex and symmetric) subset $K$ of $Z$, 
such that for each $\varepsilon > 0$ and for each  weakly compact 
subset $L$ of $X$ there exists $n \in \mathbb{N} $ with 
$L\subseteq nK+\varepsilon B_Z$. 
\end{defin}
\begin{rem} \label{R:SWCG}
\textbf{(i)} The previous definition is equivalent to the following: A Banach space $X$ is SWCG with respect to a superspace $Z$ if there exists a sequence $(K_n)$ of weakly compact, convex symmetric subsets of 
$Z$ such that for every weakly compact subset $L$ of $X$ and for every $\ee >0$ there exists 
$n \in \mathbb{N}$ with 
$L \subseteq K_n+\varepsilon B_Z$ (cf. \cite[Th. 2.1]{SW}).

\textbf{(ii)} Let $Z$ be a SWCG Banach space, according to the definition 
given in \cite{SW}, and let $K$ be a weakly compact set, that strongly generates $Z$. It is clear that if 
$X$ is a closed linear subspace of $Z$, then $K$ strongly generates $X$ according to Defin. \ref{D:relSWCG}. Moreover as there exists a closed linear subspace $X$ of $Z=L_1[0, 1]$, that is not SWCG 
(\cite[Cor. 3.10]{MS}, \cite{R1}), 
Defin. \ref{D:relSWCG} makes sense. If $X=Z$, then Defin. \ref{D:relSWCG} gives us the concept of the SWCG Banach space according to Schl\"uchterman and Wheeler (\cite{SW}).

\textbf{(iii)} Let $X$, $Z$ be Banach spaces such that $X \subseteq Z$ and $X$ is SWCG with respect to $Z$. 
If $Y$ is a closed linear subspace of $X$, then it is SWCG with respect to $Z$. Thus the class of spaces introduced by Defin. \ref{D:relSWCG} is closed under subspaces.
\end{rem}
\begin{prop} \label{P:WCG}
If a Banach space $X$ is SWCG with respect to a Banach space $Z$, then there exists a WCG Banach 
space $Y$, with $X \subseteq Y \subseteq Z$, such that $X$ is SWCG with respect to $Y$. 
\end{prop}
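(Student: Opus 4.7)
The plan is to let $K \subseteq Z$ be a weakly compact, convex, symmetric set witnessing that $X$ is SWCG with respect to $Z$ (Def. \ref{D:relSWCG}), and take $Y := \overline{\mathrm{span}}(K)$ as a Banach subspace of $Z$. I expect the whole proof to rest on one basic observation: every point of $X$ already lies in $\overline{\mathrm{span}}(K)$. Once that is established, the verifications of the three required properties are almost formal.

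First I would check that $X \subseteq Y$. For any $x \in X$ the singleton $\{x\}$ is a weakly compact subset of $X$, so for every $\varepsilon > 0$ there is $n \in \N$ with $x \in nK + \varepsilon B_Z$, i.e.\ $\mathrm{dist}(x, \mathrm{span}(K)) \leq \varepsilon$. Letting $\varepsilon \to 0$ gives $x \in \overline{\mathrm{span}}(K) = Y$.

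Next I would verify that $Y$ is WCG. The weak topology $\sigma(Y, Y^*)$ coincides with the trace $\sigma(Z, Z^*)|_Y$ by Hahn--Banach, so $K$ remains weakly compact as a subset of $Y$; since $\overline{\mathrm{span}}(K) = Y$ by construction, $Y$ is WCG. Finally, to show $X$ is SWCG with respect to $Y$, I would start with an arbitrary weakly compact $L \subseteq X$ and $\varepsilon > 0$, and use the hypothesis to obtain $n$ with $L \subseteq nK + \varepsilon B_Z$. For each $\ell \in L$ pick $k \in K$ and $z \in \varepsilon B_Z$ with $\ell = nk + z$; since $\ell \in X \subseteq Y$ and $nk \in Y$, the difference $z = \ell - nk$ lies in $Y$, and as the norm on $Y$ is inherited from $Z$ we get $z \in \varepsilon B_Y$. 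Hence $L \subseteq nK + \varepsilon B_Y$, which is the required estimate.

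The only real content is the first step --- the inclusion $X \subseteq \overline{\mathrm{span}}(K)$, obtained by specialising the SWCG condition to singletons. Steps 2 and 3 are then routine and no genuine obstacle is anticipated.
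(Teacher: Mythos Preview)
Your proposal is correct and follows exactly the paper's approach: the paper also sets $Y=\overline{\langle K\rangle}$ and declares the remaining verifications ``evident.'' You have simply supplied the details the paper omits --- in particular the observation that $X\subseteq Y$ via singletons and that the error term $z=\ell-nk$ automatically lands in $Y$ --- so there is nothing to add.
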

\begin{proof} Consider a weakly compact subset $K$ of $Z$, that strongly generates $X$. Put 
$Y= \overline{\langle K \rangle}$ (the closed linear hull of $K$ in $Z$). It is evident that $Y$ is a proper space.
\end{proof} 
\begin{rem}\label{R:relSWCG}
\textbf{(i)} Let $X \subseteq Z_1 \subseteq Z_2$ be Banach spaces. If $X$ is SWCG with respect to $Z_1$, then it is SWCG with respect to $Z_2$.

\textbf{(ii)} Let $X \subseteq Z$ be Banach spaces, such that $X$ is SWCG with respect to its superspace $Z$. 
By Prop. \ref{P:WCG}  $Z$ may be assumed WCG, consequently $X$ as a subspace of $Z$ is WKA. Later we shall prove that $X$ satisfies stronger conditions. More precisely, we shall prove that $X$ is SWKA (Cor. \ref{C:SWKA}) 
and moreover is a strongly $\K$ subset of $(X^{**}, w^*)$.
\end{rem}
In the sequel we prove that the above property remains invariant under isomorphisms. In fact, we prove something more general. This property is preserved by weakly--compact covering linear operators.
\begin{prop}\label{P:WCCSG}
Let $X$,$Y$, $Z$ be Banach spaces such that $X$ is SWCG with respect to $Z$. If there exists 
a weakly--compact covering  linear operator $T \colon X \to Y$, then there exists a Banach space $Z_1$, such that $Y$ is SWCG with respect to $Z_1$.
\end{prop}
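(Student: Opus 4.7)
The plan is to realise $Z_1$ as a pushout that fuses $Z$ and $Y$ along the graph of $T$ on $X$. Fix a weakly compact, convex, symmetric set $K_0 \subseteq Z$ witnessing that $X$ is SWCG with respect to $Z$, and set
\[
Z_1 := (Z \oplus_1 Y)/N, \qquad N := \{(x, -Tx) : x \in X\}.
\]
Since $X$ is closed in $Z$ and $T$ is continuous, $N$ is a closed linear subspace of $Z \oplus_1 Y$, so $Z_1$ is a Banach space. Let $j \colon Y \to Z_1$ be defined by $j(y) = [(0, y)]$. This map is injective (if $(0, y) \in N$ then $y = 0$), obviously contractive, and the lower bound
\[
\|j(y)\|_{Z_1} = \inf_{x \in X}\bigl(\|x\| + \|y - Tx\|\bigr) \geq \frac{\|y\|}{1 + \|T\|},
\]
obtained from $\|y\| \leq \|y-Tx\| + \|T\|\|x\|$, shows that $j$ is a topological embedding; completeness of $Y$ then makes $j(Y)$ closed in $Z_1$, so I would identify $Y$ with this closed subspace. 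The defining relations of $Z_1$ give the crucial identity $[(x,0)] = [(0, Tx)]$ for every $x \in X$, i.e.\ the two natural continuous linear maps from $Z$ and from $Y$ into $Z_1$ agree on $X$ after composing with $T$.

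It then remains to show that the image $K_1 := \{[(z, 0)] : z \in K_0\}$, which is weakly compact, convex, and symmetric as the image of $K_0$ under a bounded linear map, strongly generates $Y$ inside $Z_1$. Given a weakly compact $L \subseteq Y$ and $\ee > 0$, use the hypothesis that $T$ is weakly--compact covering to pick a weakly compact $K \subseteq X$ with $T(K) = L$; then the SWCG property of $X$ with respect to $Z$ yields some $n \in \N$ with $K \subseteq nK_0 + \ee B_Z$. Applying the identity $[(x,0)] = [(0, Tx)]$ and the fact that $z \mapsto [(z,0)]$ is a contraction from $Z$ into $Z_1$ converts this into
\[
L = \{[(0, Tx)] : x \in K\} = \{[(x, 0)] : x \in K\} \subseteq nK_1 + \ee B_{Z_1},
\]
which is exactly the SWCG inequality for $Y$ inside $Z_1$.

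The main difficulty is not a single sharp estimate but choosing the correct universal construction: $Z_1$ must simultaneously contain a closed copy of $Y$ and reflect the action of $T$ as an honest equality of elements, so that a weakly compact preimage of $L$ produced by the covering hypothesis can be transported back to $K_0$. The pushout $(Z \oplus_1 Y)/N$ is tailored for exactly this purpose, and the only non-trivial analytic input is the lower bound on $\|j(y)\|_{Z_1}$ that guarantees $j$ is a topological embedding (so weak compactness and the unit ball transfer correctly to $Z_1$); everything else is a routine transcription through the quotient.
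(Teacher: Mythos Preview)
Your proof is correct, but it takes a genuinely different route from the paper's. The paper embeds $Y$ isometrically into $\ell_\infty(\Gamma)$, uses the injectivity of $\ell_\infty(\Gamma)$ to extend $T$ to a bounded operator $\widetilde{T}\colon Z \to \ell_\infty(\Gamma)$, and then simply takes $Z_1 = \ell_\infty(\Gamma)$ with generating set $\widetilde{T}(K_0)$; the verification is then a two-line computation. Your pushout $(Z\oplus_1 Y)/N$ accomplishes the same thing---producing a superspace of $Y$ in which the image of $K_0$ is visible and the relation $Tx \leftrightarrow x$ becomes an equality---without appealing to injectivity, at the cost of having to verify by hand that $j$ is a topological embedding (which you do correctly via the bound $\|j(y)\|\ge \|y\|/(1+\|T\|)$). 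A minor advantage of your construction is that $Z_1$ inherits size and structure from $Z$ and $Y$: for instance, if $Z$ and $Y$ are separable then so is your $Z_1$, whereas $\ell_\infty(\Gamma)$ is typically enormous. A minor disadvantage is that your $j$ is only an isomorphic (not isometric) embedding, so strictly speaking you have shown that $j(Y)$, rather than $Y$ itself, is SWCG with respect to $Z_1$; this is harmless since the property is isomorphism-invariant, but it is worth noting explicitly.
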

\begin{proof}
The Banach space $Y$ is contained isometrically in the Banach space 
$\ell_\infty (\varGamma)$ for some set $\varGamma$. The space  $\ell_\infty (\varGamma)$ is injective, 
so there exists a bounded linear operator 
$\widetilde{T} \colon Z \to \ell_\infty (\varGamma)$, that extends $T$. Put $Z_1=\ell_\infty (\varGamma)$ 
and $\Omega=\widetilde{T}(K)$, where $K$ is any weakly compact subset of $Z$ that strongly generates $X$. 
We are going to prove that $\Omega$ strongly generates $Y$. Consider a weakly compact subset $L$ of $Y$ and $\varepsilon >0$. As the oparetor $T$ is weakly compact covering, there exists a weakly compact subset $L_1$ of $X$, such that $T(L_1)=L$. The set $K$ strongly generates $X$, hence there exists 
$n \in \mathbb{N}$ such that
$L_1 \subseteq nK+\frac{\varepsilon}{\|\widetilde{T}\|}B_Z$, hence 
$\widetilde{T}(L_1) \subseteq n \widetilde{T}(K)+\frac{\varepsilon}{\|\widetilde{T}\|}\widetilde{T}(B_Z)$.
Then
$L \subseteq n\Omega+\frac{\varepsilon}{\| \widetilde{T} \|}\| \widetilde{T} \|B_{Z_1}$, 
that is, $L \subseteq n\Omega +\varepsilon B_{Z_1}$.
Thus the weakly compact set $\Omega$ strongly generates $Y$, so $Y$ is SWCG with respect to $Z_1$.
\end{proof}
\begin{prop}
Let $X$, $Z$ be Banach spaces and $X$ is SWCG with respect to $Z$. If $Y$ is isomorphic to $X$, then there exists a  Banach space $Z_1$, such that $Y$ is SWCG with respect to $Z_1$.
\end{prop}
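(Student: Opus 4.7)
The plan is to derive this as an immediate corollary of Proposition \ref{P:WCCSG}, since an isomorphism is in particular a weakly--compact covering linear operator.

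More precisely, let $T\colon X \to Y$ be the given isomorphism. First I would check that $T$ is weakly--compact covering. Given a weakly compact subset $L$ of $Y$, set $L_1 = T^{-1}(L) \subseteq X$. Because $T^{-1}\colon Y \to X$ is a bounded linear operator, it is continuous from the weak topology of $Y$ to the weak topology of $X$; hence $L_1 = T^{-1}(L)$ is weakly compact in $X$, and clearly $T(L_1) = L$. Thus $T$ is weakly--compact covering.

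Having established this, the conclusion follows directly from Proposition \ref{P:WCCSG}: there exists a Banach space $Z_1$ (in fact one may take $Z_1 = \ell_\infty(\Gamma)$ for a suitable index set $\Gamma$, as in the proof of that proposition) such that $Y$ is SWCG with respect to $Z_1$.

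There is really no obstacle here; the only minor point is the observation that bounded linear operators are automatically weak--to--weak continuous, which makes isomorphisms send weakly compact sets onto weakly compact sets in both directions and therefore qualifies them as weakly--compact covering maps.
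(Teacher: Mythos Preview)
Your proof is correct and follows exactly the same approach as the paper, which simply states that the result is immediate from Proposition~\ref{P:WCCSG}. Your added verification that an isomorphism is weakly--compact covering (via weak--to--weak continuity of $T^{-1}$) just makes explicit what the paper leaves to the reader.
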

\begin{proof}
The proof is immediate by Prop. \ref{P:WCCSG}.
\end{proof}
The next result is the analogue of \cite[Th. 2.7]{SW} (and also of \cite[Prop.4.2]{KM}).
\begin{prop}
Let $X$, $Z$ be Banach spaces and $X$ is SWCG with respect to $Z$. If $Y$ is a reflexive subspace of 
$X$, then the space $X/Y$ is SWCG with respect to a Banach space $Z_1$.
\end{prop}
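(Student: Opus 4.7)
The plan is to reduce the claim to Proposition \ref{P:WCCSG} by showing that the canonical quotient map $q\colon X \to X/Y$ is weakly compact covering, i.e., every weakly compact $L \subseteq X/Y$ equals $q(\tilde L)$ for some weakly compact $\tilde L \subseteq X$. Once this is established, Proposition \ref{P:WCCSG} applied to $T = q$ immediately produces a Banach space $Z_1$ with respect to which $X/Y$ is SWCG, which is exactly the claim.

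To produce the lift $\tilde L$, set $M = \sup_{\ell \in L}\|\ell\|$ and consider
\[
\tilde L \;=\; q^{-1}(L) \cap (M+1) B_X.
\]
Each $\ell \in L$ admits some preimage in $X$ of norm at most $M+1$, so $q(\tilde L) = L$; moreover $\tilde L$ is plainly norm-bounded and norm-closed. The essential step is to verify that $\tilde L$ is weakly compact, and this is where the hypothesis that $Y$ is reflexive enters.

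I would argue in the bidual. Let $x^{**} \in X^{**}$ be a $w^*$-limit of a net from $\tilde L$. The bitranspose $q^{**}\colon X^{**} \to (X/Y)^{**}$ is $w^*$-to-$w^*$ continuous, so $q^{**}(x^{**})$ lies in the $w^*$-closure of $L$ in $(X/Y)^{**}$; since $L$ is weakly compact, this $w^*$-closure equals $L$, whence $q^{**}(x^{**}) \in L \subseteq X/Y$. Pick $z \in X$ with $q(z) = q^{**}(x^{**})$; then $x^{**} - z \in \ker q^{**} = Y^{\perp\perp}$, and reflexivity of $Y$ identifies $Y^{\perp\perp}$ with $Y^{**} = Y \subseteq X$, so $x^{**} \in X$. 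A quick check on norm and image shows $x^{**} \in \tilde L$. Hence the $w^*$-closure of $\tilde L$ in $X^{**}$ is contained in $X$, and $\tilde L$ is weakly compact.

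The main obstacle is precisely this bidual step: one must correctly identify $\ker q^{**} = Y^{\perp\perp}$, use reflexivity to collapse this to $Y$ inside $X$, and invoke the fact that weakly compact subsets of a Banach space are $w^*$-closed in the bidual. Once weak compact covering of $q$ is in hand, the remainder of the proof is a one-line application of Proposition \ref{P:WCCSG}.
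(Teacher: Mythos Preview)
Your proof is correct and follows exactly the same strategy as the paper: show that the quotient map $q\colon X\to X/Y$ is weakly compact covering, then invoke Proposition~\ref{P:WCCSG}. The only difference is that the paper delegates the weakly compact covering property to \cite[Prop.~4.2]{KM}, whereas you supply a self-contained argument via the bidual and the identification $\ker q^{**}=Y^{\perp\perp}=Y$ for reflexive $Y$; your argument for this step is correct.
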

\begin{proof}
The natural map $\pi \colon X \to X/Y$ with $\pi (x)=x+Y$ is weakly--compact covering 
(cf. \cite[Prop. 4.2]{KM}). So our claim is immediate from Prop. \ref{P:WCCSG}.
\end{proof}
\begin{prop} \label{P: separable}
Let $X$, $Z$ be Banach spaces and $X$ is SWCG with respect to $Z$. If $X$ separable, then there exists a separable Banach space $Y$, such that $X$ is SWCG with respect to $Y$.
\end{prop}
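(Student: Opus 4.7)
The plan is to exploit the separability of $X$ in order to shrink the weakly compact strong generator to a norm-separable one, and then to take $Y$ to be the closed linear span of $X$ together with this smaller set.

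Fix a weakly compact, convex, symmetric $K \subseteq Z$ that strongly generates $X$ (as in Definition \ref{D:relSWCG}). For each $n,m \in \mathbb{N}$, the ``slice'' $S_{n,m} := (nK + (1/m) B_Z) \cap X$ is a subset of the separable space $X$ and is therefore itself norm-separable; choose a countable norm-dense $D_{n,m} \subseteq S_{n,m}$, and for each $s \in D_{n,m}$ fix a decomposition $s = n k_s + b_s$ with $k_s \in K$ and $\|b_s\| \leq 1/m$. Let $K_0$ be the resulting countable subset of $K$ consisting of all such $k_s$. Put $Y := \overline{\langle X \cup K_0 \rangle}$, a separable closed subspace of $Z$ containing $X$, and let $K'$ be the norm-closed convex hull of $K_0 \cup (-K_0)$. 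Then $K' \subseteq Y$ and $K'$ is norm-separable; moreover, since $K$ is convex and symmetric one has $K_0 \cup (-K_0) \subseteq K$, and since $K$ is weakly compact (hence norm-closed) also $K' \subseteq K$, so $K'$ is a norm-closed---hence by Mazur's theorem weakly closed---convex subset of the weakly compact set $K$, and is therefore itself weakly compact.

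It remains to verify that $K'$ strongly generates $X$ with respect to $Y$. Given a weakly compact $L \subseteq X$ and $\varepsilon > 0$, choose $m \in \mathbb{N}$ with $2/m \leq \varepsilon$; by the hypothesis on $K$ there is $n \in \mathbb{N}$ with $L \subseteq nK + (1/m) B_Z$, whence $L \subseteq S_{n,m}$. For each $\ell \in L$, the density of $D_{n,m}$ in $S_{n,m}$ yields some $s \in D_{n,m}$ with $\|\ell - s\| \leq 1/m$; writing $s = n k_s + b_s$ as chosen above, we have $k_s \in K_0 \subseteq K'$ and
$\|\ell - n k_s\| \leq \|\ell - s\| + \|b_s\| \leq 2/m \leq \varepsilon$. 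Since $\ell$ and $n k_s$ both lie in $Y$, the difference $\ell - n k_s$ belongs to $\varepsilon B_Y$, so $L \subseteq n K' + \varepsilon B_Y$, as required.

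The main delicacy is a coherence issue: the pointwise approximations of distinct points $\ell \in L$ may draw on many different $k_s$'s, and one must still fit these into a single weakly compact subset of the separable superspace $Y$. This is precisely what the passage to $K'$ accomplishes, with its weak compactness coming for free from being a weakly closed subset of the convex symmetric weakly compact $K$, and its norm-separability (as well as that of $Y$) coming from the countability of $K_0$.
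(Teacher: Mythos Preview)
Your argument is correct, and it takes a genuinely different route from the paper's. The paper first replaces $Z$ by a WCG superspace (via Prop.~\ref{P:WCG}) and then invokes Lindenstrauss's projection theorem for separable subspaces of WCG spaces to obtain a norm-one projection $P\colon Z\to Z$ with separable range containing $X$; setting $Y=P(Z)$ and $\Omega=P(K)$ immediately gives the result. Your approach is more elementary: rather than appealing to the projectional structure of WCG spaces, you directly extract a countable subset $K_0\subseteq K$ by approximating dense points of the separable slices $(nK+\tfrac{1}{m}B_Z)\cap X$, and then take $K'$ to be the closed convex symmetric hull of $K_0$. This avoids the non-trivial Lindenstrauss result entirely and works in any superspace $Z$, without first reducing to the WCG case. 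The paper's proof, on the other hand, is shorter once the projection is in hand and has the mild advantage that $Y=P(Z)$ is obtained as a complemented subspace of $Z$; your $Y$ need not be complemented, though this is irrelevant for the stated conclusion.
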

\begin{proof}
As we proved above $X$ is SWCG with respect to a WCG Banach space, so we can assume that $Z$ is WCG. 
As $X$ is a separable subspace of a (non separable) WCG Banach space there exists a projection 
	\[P \colon Z \to Z \quad \text{with} \quad \| P\| =1\, , \quad P(Z) \quad \text{separable and} \quad X \subseteq P(Z)\, .
\]
(cf. \cite[Th. 3.1]{L}).
We are going to prove that $X$ is SWCG with respect to the separable Banach space $Y=P(Z)$. Consider a weakly compact subset $K$ of $Z$, that strongly generates $X$ and put $\Omega =P(K)$. 

Let $L$ be a weakly compact subset of $X$ and $\ee >0$. Then there exists $n \in \mathbb{N}$ such that 
$L \subseteq nK+\ee B_Z$, hence $P(L) \subseteq nP(K)+\ee P(B_Z)$. As $P$ is a projection with 
$\| P \| =1$ and $L \subseteq X \subseteq P(Z)$, by the last relation follows 
$L \subseteq n\Omega+\ee B_Y$. Therefore $X$ is SWCG with respect to the separable Banach space $Y$.
\end{proof}
\begin{rem}
The separable space $Y$ of Prop. \ref{P: separable} can be chosen to be the space $C(B_{X^*})$. 
Indeed, $Y$ is embedded into $C(B_{Y*})$, which in its turn is isomorphic (by Milyutin's theorem) 
to the space $C(B_{X*})$. We do not know whether this result holds true in the case where 
$X$ is not separable.
\end{rem}
In the sequel we state a characterization of the class introduced by Defin. \ref{D:relSWCG}, 
which is analogous to the characterization of the class of SWCG Banach spaces (\cite[Th. 2.1]{SW}. 
Since its proof is analogous to the corresponding proof for SWCG spaces, we give a brief outline of it.
\begin{thm} \label{T:Mackey}
Let $X$ be a Banach space. The following are equivalent:
\begin{enumerate}[\upshape(i)]
	\item $X$ is SWCG with respect to a Banach space.
	\item $X$ is contained in a Banach space $Z$ and exists a metrizable topology $\tau _d$ in 
	$B_{Z^*}$ such that:			
\begin{enumerate}
	\item [(a)] The Mackey topology $\tau$ on $B_{Z^*}$ is finer than $\tau_d$.
	\item [(b)] The topology $\tau_d$ is finer than topology $\tau_X$ on $B_{Z^*}$ of uniform convergence 
	on weakly compact subsets of $X$.
\end{enumerate}
\end{enumerate}
\end{thm}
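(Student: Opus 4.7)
The plan is to adapt the proof of the classical Schl\"uchtermann--Wheeler characterization \cite[Th. 2.1]{SW} to the present relative setting, replacing the single strongly generating weakly compact subset of $X$ by a sequence of such sets inside the superspace $Z$ (via Rem. \ref{R:SWCG}(i)) and using polar duality between $Z$ and $Z^*$ throughout.

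For (i)$\Rightarrow$(ii), I would fix a sequence $(K_n)$ of weakly compact, convex, symmetric subsets of $Z$ as supplied by Rem. \ref{R:SWCG}(i) and put $p_n(f)=\sup_{x\in K_n}|f(x)|$. Then
\[
d(f,g)=\sum_{n=1}^\infty 2^{-n}\,\frac{p_n(f-g)}{1+p_n(f-g)}
\]
defines a metric on $B_{Z^*}$; let $\tau_d$ be the induced topology. Condition (a) is immediate because each $p_n$ is $\tau$-continuous, as $K_n$ is weakly compact in $Z$. For (b), given a weakly compact $L\subseteq X$ and $\ee>0$, I would pick $n$ with $L\subseteq K_n+(\ee/3)B_Z$; then $\sup_L|f|\leq p_n(f)+\ee/3$ for every $f\in B_{Z^*}$, so a sufficiently small $p_n$-ball (hence a sufficiently small $\tau_d$-neighbourhood of $0$) lies inside the prescribed $\tau_X$-neighbourhood.

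For (ii)$\Rightarrow$(i), metrizability of $\tau_d$ produces a countable local base $(V_n)$ of $\tau_d$ at $0$ in $B_{Z^*}$. Since $\tau$ is finer than $\tau_d$ on $B_{Z^*}$, each $V_n$ is a $\tau$-neighbourhood, so by the Mackey--Arens theorem I can choose a weakly compact, convex, symmetric set $C_n\subseteq Z$ with $C_n^\circ\cap B_{Z^*}\subseteq V_n$. The claim is that the sequence $(kC_n)_{k,n\in\N}$ witnesses the strong generation of $X$ inside $Z$ in the sense of Rem. \ref{R:SWCG}(i). Given $L\subseteq X$ weakly compact and $\ee>0$, the set $U=\{f\in B_{Z^*}:\sup_L|f|\leq\ee/2\}$ is a $\tau_X$-neighbourhood and hence a $\tau_d$-neighbourhood of $0$, so $C_n^\circ\cap B_{Z^*}\subseteq U$ for some $n$. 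Rewriting this as $(C_n\cup B_Z)^\circ\subseteq(2\ee^{-1}L\cup B_Z)^\circ$ and applying the bipolar theorem, together with the identity $\mathrm{conv}(A\cup B)\subseteq A+B$ for convex sets containing $0$, I obtain $2\ee^{-1}L\subseteq C_n+B_Z$, and therefore $L\subseteq\lceil\ee/2\rceil C_n+\ee B_Z$, which is of the required form with the member $kC_n$ of the sequence for $k=\lceil\ee/2\rceil$.

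The principal technical difficulty is this last polar--bipolar translation: a $B_{Z^*}$-level inclusion of polar neighbourhoods must be lifted to a norm-level inclusion of the shape $L\subseteq mK+\ee B_Z$. One needs to verify that $C_n+B_Z$ is weakly closed (which holds because the sum of a weakly compact and a weakly closed set is weakly closed in any Hausdorff topological vector space), so that the bipolar closure collapses to $C_n+B_Z$, and to manage the scaling constants so that the final multiplier is a positive integer. The remaining ingredients are structural and parallel the proof for SWCG spaces in \cite{SW}.
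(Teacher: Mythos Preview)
Your argument is correct and follows essentially the same route as the paper: for (i)$\Rightarrow$(ii) the paper uses a single strongly generating set $K\subseteq Z$ and the sup-metric $d(f,g)=\sup_{x\in K}|f(x)-g(x)|$, while you use the equivalent sequence formulation of Rem.~\ref{R:SWCG}(i) and a series metric, and for (ii)$\Rightarrow$(i) both proofs pick a countable $\tau_d$-base at $0$, produce weakly compact polars via the Mackey description, and then run the polar--bipolar computation (which the paper simply defers to \cite[Th.~2.1]{SW}). One small point worth tidying: your $d$ is a priori only a pseudometric unless $\bigcup_n K_n$ is total in $Z$; as in the paper's subsequent remark, this is harmless since by Prop.~\ref{P:WCG} one may assume $Z=\overline{\langle K\rangle}$.
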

 \begin{proof}
\textbf{(i)$\Rightarrow$ (ii)} Let $X\subseteq Z$ and $K \subseteq Z$ be a 
weakly compact convex and symmetric set that strongly generates $X$. A metric $d$ whose  
topology $\tau_d$ satisfies our requirements is the following
\[d(x^*, y^*)=\max \{|x^*(x)-y^*(x)|: x\in K \} \quad \text{for all} \quad x^*,y^* \in B_{Z^*}\, .
\]

\textbf{(ii) $\Rightarrow$ (i)} Let $Z$ be a Banach space with 
$X \subseteq Z$, that satisfies conditions 
(a) and (b). Consider a neighbourhood base $(U_n)$ of $0$ for $(B_{Z^*}, \tau _d)$. 
As the Mackey topology $\tau$ is finer than the metric topology $\tau _d$ 
there exists a sequence $(K_n)$ of weakly compact 
convex symmetric subsets of $Z$ such that 
$ K_n^0\cap B_{Z^*} \subseteq U_n$ for all $n \in \mathbb{N}$. 
Let $L$ be a weakly compact subset of $X$ and $0<\ee <1$. Put $c=\dfrac{1}{\ee}$. 
As the metric topology 
$\tau _d$ is finer than $\tau _X$ there exists $n \in \mathbb{N}$ such that 
$U_n \subseteq (cL)^0 \cap B_{Z^*}$. 
We continue as in the proof of implication (a)$\Rightarrow$(b) of \cite[Th. 2.1]{SW}.
\end{proof}
\begin{rem}
The metric $d$, that appears in the proof of direction (i)$ \Rightarrow $ (ii) of 
Th. \ref{T:Mackey} is nothing else but the metric of supremum norm of the space $C(K)$. Indeed, the 
operator $T \colon Z^* \to C(K)$ defined by $T(z^*)=z^*|K$ is bounded, linear and 1--1 
(as $K$ can be assumed total in $Z$). 
Moreover $T$ is $(w^*, \tau _p)$ continuous (where $\tau _p$ is the topology of pointwise convergence), so  $\Omega=T(B_{Z^*})$ is $\tau _p$--compact set homeomorphic with the unit ball $(B_{Z^*}, w^*)$. 
It is also bounded, so Grothendieck's theorem implies that it is weakly compact. 
Therefore the space $\Omega$ endowed with the metric of 
supremum norm is a complete metric space, as a closed subset of the Banach space $C(K)$.
\end{rem}
\begin{cor}
Let $X\subseteq Z$ be Banach spaces and $X$ is SWCG with respect to $Z$. If $X$ is separable, then the space 
$(B_{X^*}, \tau)$ is analytic, where $\tau$ is the Mackey topology on $X^*$.
\end{cor}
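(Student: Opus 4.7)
The plan is to reduce to the case where the ambient superspace is separable, apply Theorem~\ref{T:Mackey} to equip $B_{Z^*}$ with a Polish metric topology, and then push this structure forward to $B_{X^*}$ via the Hahn--Banach restriction map.

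First I would apply Proposition~\ref{P: separable} to replace $Z$ by a separable Banach space that still contains $X$ and with respect to which $X$ is SWCG; so henceforth assume $Z$ itself is separable. Choose a weakly compact, convex, symmetric set $K \subseteq Z$ strongly generating $X$. Separability of $Z$ makes $(K,w)$ a compact metric space, so $C(K)$ is a separable Banach space. Following the proof of Theorem~\ref{T:Mackey}, let $\tau_d$ be the topology on $B_{Z^*}$ induced by the metric
\[
d(x^*, y^*) \;=\; \sup_{x \in K}|x^*(x) - y^*(x)|, \qquad x^*, y^* \in B_{Z^*}.
\]
Theorem~\ref{T:Mackey} gives $\tau_Z \supseteq \tau_d \supseteq \tau_X$ on $B_{Z^*}$, where $\tau_Z$ denotes the Mackey topology of $Z^*$ and $\tau_X$ the topology of uniform convergence on weakly compact subsets of $X$; and the remark after Theorem~\ref{T:Mackey} identifies $(B_{Z^*}, d)$ with a norm-closed subset of $C(K)$. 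Combined with the separability of $C(K)$, this makes $(B_{Z^*}, \tau_d)$ a Polish space.

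Finally I would analyse the Hahn--Banach restriction map $r \colon Z^* \to X^*$, $r(z^*) = z^*|_X$, which satisfies $r(B_{Z^*}) = B_{X^*}$. Since every weakly compact subset of $X$ is also weakly compact in $Z$, the topology $\tau_X$ on $B_{Z^*}$ is precisely the pull-back under $r$ of the Mackey topology $\tau$ of $X^*$ restricted to $B_{X^*}$. Hence $r \colon (B_{Z^*}, \tau_X) \to (B_{X^*}, \tau)$ is continuous, and \emph{a fortiori} so is $r \colon (B_{Z^*}, \tau_d) \to (B_{X^*}, \tau)$. It follows that $(B_{X^*}, \tau)$ is a continuous image of a Polish space, hence analytic. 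The only point requiring care is the identification of $\tau_X$ on $B_{Z^*}$ as the $r$-preimage of the Mackey topology of $X^*$; this is immediate from the definitions once Theorem~\ref{T:Mackey} is in hand, so I do not foresee a serious obstacle in carrying the plan out.
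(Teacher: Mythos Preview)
Your proposal is correct and follows essentially the same route as the paper: reduce to separable $Z$ (the paper does this tacitly, you cite Prop.~\ref{P: separable}), use the metric $d$ from Theorem~\ref{T:Mackey} together with the remark after it to see that $(B_{Z^*},\tau_d)$ is Polish, and then push forward via the restriction map $\Phi(z^*)=z^*|_X$ (your $r$), whose continuity into $(B_{X^*},\tau)$ follows from $\tau_d\supseteq\tau_X$. Your explicit identification of $\tau_X$ as the $r$-pullback of the Mackey topology on $B_{X^*}$ is a bit more than is needed---mere continuity of $r$ suffices---but it is correct and harmless.
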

\begin{proof}
We can assume and we do that $Z$ is separable, so by the previous remark $C(K)$ is a separable 
Banach space (as $K$ is weakly compact metrizable set). It follows that the closed subspace 
$\Omega =T(B_{Z^*})$ of $C(K)$ endowed with the metric $d$ of supremum norm is 
a complete separable metric space. By assertion (b) of (ii) of Th. \ref{T:Mackey} the map 
	\[\Phi \colon (B_{Z^*}, \tau _d) \to (B_{X^*}, \tau) \quad \text{with} \quad \Phi (z^*)=z^*|X
\] 
is continuous and onto, hence the conclusion follows.
\end{proof}
\begin{rem}
A separable Banach space $X$ such that the space $(B_{X*}, \tau)$ is analytic is not necessarily 
SWCG with respect to some Banach space. Indeed, consider a non reflexive Banach space $X$ 
with separable dual. Then by Cor. \ref{C:reflex} (or \ref{C:Asplund}) $X$ does not satisfy 
Def. \ref{D:relSWCG}; on the other hand since the norm topology is finer than the Mackey 
topology on $B_{X*}$ we get that $(B_{X*}, \tau)$ is an analytic space.
\end{rem}
 
As we have noted (Rem. \ref{R:SWCG} ) a closed linear subspace $X$, of a SWCG Banach space $Z$, is 
SWCG with respect to $Z$. But we do not know the answer to the next question.
\begin{quest}
Let $X \subseteq Z$ be Banach spaces and $X$ is SWCG with respect to $Z$. Is then $X$ isomorphic to a subspace of a 
SWCG Banach space? In other words can we always assume in Def. \ref{D:relSWCG} that $Z$ is SWCG? 
(According to Prop. \ref{P:WCG} we can always assume that $Z$ is WCG.)
\end{quest}
 
In the sequel, we are going to prove that the class defined by Def. \ref{D:relSWCG} is contained in the class of 
SWKA Banach space. We generalize Def. \ref{D:relSWCG} and we shall prove that the (at least formally) larger class has the same property. Firstly, we recall an internal characterization of subspaces of WCG Banach spaces, due to 
Fabian, Montesinos and Zizler (\cite[Th. 1]{FMZ1}).
Namely, it is proved that the following significant result holds:
\begin{xthm}[Fabian, Montesinos, Zizler] 
A Banach space $X$ is a subspace of a WCG Banach space if and only if for each $p \in \mathbb{N}$ there exists a sequence $(M_{n,p})_n$ consisting of $\dfrac{1}{p}$--weakly relatively compact subsets of $X$ such that  $X=\bigcup_{n=1}^\infty M_{n,p}$.
\end{xthm}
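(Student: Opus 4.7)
The plan is to prove the equivalence in two separate directions. The forward implication is a direct construction exploiting a weakly compact generator of the ambient WCG space, while the converse is substantially harder and, in the approach I propose, rests on a Davis--Figiel--Johnson--Pelczynski type interpolation applied to each $M_{n,p}$, aggregated through a weighted $\ell_2$-sum.

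\textbf{Forward direction.} Suppose $X \subseteq Y$ with $Y$ a WCG Banach space, and, invoking Krein--\v{S}mulian, fix a weakly compact, convex, symmetric set $K \subseteq Y$ with $\overline{\mathrm{span}}(K) = Y$. Norm density of $\bigcup_n nK$ in $Y$ yields $Y = \bigcup_{n=1}^\infty \bigl( nK + \tfrac{1}{3p} B_Y \bigr)$ for each $p$, so setting $M_{n,p} := X \cap (nK + \tfrac{1}{3p} B_Y)$ gives $X = \bigcup_n M_{n,p}$. To verify $\tfrac{1}{p}$-weak relative compactness I would use the canonical $w^*$-to-$w^*$ continuous isometric embedding $X^{**} \hookrightarrow Y^{**}$ identifying $X^{**}$ with $X^{\perp\perp}$. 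Given $x^{**} \in \overline{M_{n,p}}^{w^*(X^{**})}$, take a net $x_\alpha = k_\alpha + r_\alpha$ in $M_{n,p}$ with $k_\alpha \in nK$ and $\|r_\alpha\|_Y \leq \tfrac{1}{3p}$ converging $w^*$ to $x^{**}$; passing to a subnet via the weak compactness of $nK$ gives $x^{**} = k + r$ with $k \in nK \subseteq Y$ and $\|r\|_{Y^{**}} \leq \tfrac{1}{3p}$. Since $x^{**}$ annihilates $X^\perp \subseteq Y^*$, the identity $\langle y^*, k\rangle = -\langle y^*, r\rangle$ on $X^\perp$ forces $\operatorname{dist}_Y(k, X) \leq \tfrac{1}{3p}$; choosing $x_0 \in X$ within $\tfrac{1}{3p}$ of $k$ and combining errors yields $\|x^{**} - x_0\|_{X^{**}} \leq \tfrac{1}{p}$, which is the desired property.

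\textbf{Converse direction.} After replacing each $M_{n,p}$ by its closed absolutely convex hull (which preserves $\tfrac{1}{p}$-WRC up to a universal constant, via a Krein-type argument on the $w^*$-closure in $X^{**}$), I would apply the Davis--Figiel--Johnson--Pelczynski interpolation to each $M_{n,p}$, obtaining a Banach space $Z_{n,p}$ and a bounded operator $T_{n,p} \colon Z_{n,p} \to X$ with $M_{n,p} \subseteq T_{n,p}(B_{Z_{n,p}})$, tracked so that $T_{n,p}^{**}(B_{Z_{n,p}^{**}}) \subseteq X + \tfrac{C}{p} B_{X^{**}}$ for some universal $C$. Forming the weighted $\ell_2$-sum $Z = \bigl(\bigoplus_{n,p} Z_{n,p}\bigr)_{\ell_2}$ with summable weights $\lambda_{n,p}$, and letting $T \colon Z \to X^{**}$ be the induced operator, I would set $Y := \overline{X + T(Z)}^{\|\cdot\|}$ inside $X^{**}$. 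The goal is then to check that $T(B_Z)$ is weakly compact in $Y$ and that its closed linear span, together with $X$, is dense in $Y$, making $Y$ a WCG superspace of $X$.

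\textbf{Main obstacle.} The crux lies in the converse: DFJP produces reflexive intermediate spaces only for genuinely weakly relatively compact inputs, so the $\tfrac{1}{p}$-defect must be quantitatively propagated through the construction and then damped across the countable aggregation. The supporting structural fact is that $\bigcap_p \overline{M_{n(p),p}}^{w^*} \subseteq \bigcap_p \bigl( X + \tfrac{1}{p} B_{X^{**}} \bigr) = X$ for every choice function $n \colon \N \to \N$; this transforms the $\tfrac{1}{p}$-defects into honest weak compactness at the level of intersections, and aligning the $\ell_2$-weights with this intersection structure so that the combined generator $T(B_Z)$ is weakly compact in $Y$ (not merely in $Y^{**}$) is the technical heart of the proof.
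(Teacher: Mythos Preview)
The paper does not prove this theorem at all: it is quoted verbatim as a result of Fabian, Montesinos and Zizler and attributed to \cite[Th.~1]{FMZ1}, with no proof supplied. So there is no ``paper's own proof'' against which to compare your attempt. That said, your forward direction is essentially the computation the paper carries out in Proposition~\ref{P:subspace} (for the related SWCG-relative situation), where it simply invokes \cite[Prop.~3.62]{HMVZ} to pass from $\tfrac{1}{4p}$-weak relative compactness in the superspace to $\tfrac{1}{p}$-weak relative compactness in $X$; your explicit $X^{\perp\perp}$ argument is a correct unpacking of that citation.

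For the converse, your DFJP-plus-$\ell_2$-sum outline is in the spirit of the original \cite{FMZ1} argument, and you have correctly located the genuine difficulty: DFJP applied to a merely $\tfrac{1}{p}$-weakly relatively compact set does not produce a reflexive interpolation space, so one must control the defect quantitatively and arrange the weights so that the aggregated operator has weakly compact range. Your sketch is plausible but not yet a proof; in particular, the claim that $T(B_Z)$ is weakly compact in $Y$ (rather than in $Y^{**}$) needs a concrete mechanism, and the ``intersection'' observation you record, while true, does not by itself deliver weak compactness of the image of a single ball. If you want to complete this direction you should consult \cite{FMZ1} directly, since the present paper treats the result as a black box.
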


A natural question which arises from the previous theorem is whether 
we can give an analogous characterization for subspaces of SWCG Banach spaces. 
Presently, we do not have an answer to this question, but a necessary condition 
for a space to be SWCG with respect to a Banach space, according to 
Def. \ref{D:relSWCG} is given by the following proposition. We note that in 
this proposition we follow the method of proof of \cite[Th. 1]{FMZ1}.
\begin{prop} \label{P:subspace}
If the Banach space $X$ is SWCG with respect to a Banach space $Z$, then there exists a family 
$\{M_{n,p}:n,p \in \mathbb{N}\}$ of subsets of $X$ with the following properties:
\begin{enumerate}[\upshape (i)]
	\item For every $n \in \mathbb{N}$ and for every $p \in \mathbb{N}$ the set $M_{n,p}$ is $\frac{1}{p}$--weakly relatively compact.
	\item For every $p\in \mathbb{N}$ and for every weakly compact subset $L$ of $X$ there exists 
	$n \in \mathbb{N}$ such that $L \subseteq M_{n,p}$.
\end{enumerate}
\end{prop}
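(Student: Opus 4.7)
The plan is to set, for a weakly compact convex symmetric $K \subseteq Z$ that strongly generates $X$ with respect to $Z$,
\[
M_{n,p} \;=\; X \cap \bigl(nK + \tfrac{1}{3p} B_Z\bigr), \qquad n,p \in \mathbb{N}.
\]
These sets are bounded, and condition (ii) is immediate from Def.~\ref{D:relSWCG}: given a weakly compact $L \subseteq X$ and $p \in \mathbb{N}$, the SWCG hypothesis yields an $n$ with $L \subseteq nK + \tfrac{1}{3p} B_Z$, and $L \subseteq X$ forces $L \subseteq M_{n,p}$.

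For condition (i), I would first identify $X^{**}$ with its isometric image in $Z^{**}$ via the bidual $j^{**}$ of the inclusion $j : X \hookrightarrow Z$ (by Hahn--Banach, $j^{*}$ is a quotient map and hence $j^{**}$ an isometric embedding); the $w^{*}$-topology on $X^{**}$ is the trace of the $w^{*}$-topology on $Z^{**}$ under this identification. Fix $x^{**} \in \overline{M_{n,p}}^{w^{*},X^{**}}$ and pick a net $x_\alpha = nk_\alpha + z_\alpha \in M_{n,p}$, with $k_\alpha \in K$ and $\|z_\alpha\|_Z \leq \tfrac{1}{3p}$, converging to $x^{**}$ in $(X^{**}, w^{*})$, equivalently in $(Z^{**}, w^{*})$. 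Weak compactness of $K$ allows passage to a subnet with $k_\alpha \to k \in K$ weakly in $Z$; then $nk_\alpha \to nk$ in $(Z^{**}, w^{*})$, so $z_\alpha \to x^{**} - nk$ in $(Z^{**}, w^{*})$, and $w^{*}$-lower semicontinuity of the norm gives $\|x^{**} - nk\|_{Z^{**}} \leq \tfrac{1}{3p}$.

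The crucial step is then to replace $nk \in Z$---which a priori need not lie in $X$---by an actual element of $X$ at small additional cost. For this I would pass to the quotient $\pi : Z \to Z/X$: from $\pi(x_\alpha) = 0$ we obtain $\pi(nk_\alpha) = -\pi(z_\alpha)$, so $\|\pi(nk_\alpha)\|_{Z/X} \leq \tfrac{1}{3p}$; continuity of $\pi$ together with weak lower semicontinuity of the quotient norm then yields $\mathrm{dist}(nk, X) = \|\pi(nk)\|_{Z/X} \leq \tfrac{1}{3p}$. Choosing $x \in X$ with $\|nk - x\|_Z$ sufficiently close to $\tfrac{1}{3p}$, the triangle inequality and the isometry of $j^{**}$ produce $\|x^{**} - x\|_{X^{**}} < \tfrac{1}{p}$, whence $x^{**} \in X + \tfrac{1}{p} B_{X^{**}}$, as required. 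The main obstacle is precisely the gap between $K \subseteq Z$ and $X$, which the quotient $\pi$ bridges; the coefficient $\tfrac{1}{3p}$ (rather than $\tfrac{1}{2p}$) is selected so that the final estimate is strict, placing $x^{**}$ safely inside the algebraic sum $X + \tfrac{1}{p} B_{X^{**}}$.
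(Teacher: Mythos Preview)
Your proof is correct and follows the same overall strategy as the paper: set $M_{n,p}=X\cap\bigl(nK+\tfrac{c}{p}B_Z\bigr)$ for a suitable constant $c$, observe that (ii) is immediate from the strong-generation hypothesis, and then verify (i) by transferring $\varepsilon$-weak relative compactness from $Z$ down to the subspace $X$.

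The only difference lies in how this last transfer is carried out. The paper first checks the easy inclusion $\overline{nK+\tfrac{1}{p}B_Z}^{\,w^*}\subseteq Z+\tfrac{1}{p}B_{Z^{**}}$ and then invokes \cite[Prop.~3.62]{HMVZ} (with the constant $c=\tfrac14$) to pass from ``$\tfrac{1}{4p}$-weakly relatively compact in $Z$'' to ``$\tfrac{1}{p}$-weakly relatively compact in $X$''. You instead prove this transfer by hand: embedding $X^{**}$ isometrically and $w^*$-homeomorphically into $Z^{**}$ via $j^{**}$, extracting a weak limit $k\in K$ of the $k_\alpha$, and then using the quotient map $\pi:Z\to Z/X$ together with weak lower semicontinuity of the quotient norm to bound $\mathrm{dist}(nk,X)$. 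Your direct argument is self-contained and yields the slightly better constant $c=\tfrac13$ (indeed $\tfrac12$ would already suffice, since a non-strict bound $\|x^{**}-x\|\le\tfrac{1}{p}$ is all that is needed for $x^{**}\in X+\tfrac{1}{p}B_{X^{**}}$); the paper's citation keeps the exposition shorter but hides exactly the step you worked out.
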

\begin{proof}
Let $K$ be a weakly compact subset of $Z$, that strongly generates $X$. Then for each $n$, 
$p \in \mathbb{N}$ the set $nK+\dfrac{1}{p}B_Z$ is $\frac{1}{p}$--weakly relatively compact subset of $Z$. Indeed,
	\[\overline{nK+\frac{1}{p}B_Z}^*  \subseteq \overline{nK}^* +\overline{\frac{1}{p}B_Z}^*
															=nK+\frac{1}{p}B_{Z^{**}}
															\subseteq Z+\frac{1}{p}B_{Z^{**}}.
\]
For every $n$, $p \in \mathbb{N}$ put 
$M_{n,p}=X\cap(nK+\frac{1}{4p}B_Z) $. 
Then, by \cite[Prop. 3.62]{HMVZ}, the set $M_{n,p}$ is an $\frac{1}{p}$--weakly relatively compact 
subset of $X$ for every $n$, $p \in \mathbb{N}$. Additionally, for every weakly compact subset 
$L$ of $X$ and for each $p \in \mathbb{N}$ there exists $n \in \mathbb{N}$ such that $L \subseteq nK+\dfrac{1}{4p}B_Z$, so $L \subseteq M_{n,p}$. Therefore the family $\{M_{n,p} :n,p \in \mathbb{N} \}$ has the desired properties.
\end{proof}

So we arrive at the following definition.
\begin{defin} \label{D:*}
We shall say that a Banach space $X$ has \emph{property (*)}, if there exists a countable family 
$\{M_{n,p} :n, p \in \mathbb{N} \}$ of (convex and symmetric) subsets of $X$, that satisfies assertions 
(i) and (ii) of Prop. \ref{P:subspace}.
\end{defin}

It is clear, from Prop. \ref{P:subspace} that every Banach space $X$ that is 
SWCG with respect to a Banach space $Z$ has property (*). If $A$, $B$ are 
$\ee$--weakly relatively compact subsets of a Banach space $X$, then 
clearly the set $A \cup B$ is $\ee$--weakly relatively compact. Thus in Def. \ref{D:*} 
we can assume that $M_{n, p} \subseteq M_{n+1, p}$ for each $n$, $p \in \mathbb{N}$. 
Moreover we can assume that the sets $M_{n,p}$ are convex and symmetric 
(see \cite[Th. 3.64]{HMVZ}).
\begin{rem} \label{R:*}
\textbf{(a)} If a Banach space $Z$ has property (*), then every closed linear subspace of 
$X$ has property (*). Indeed, if a countable family $\{M_{n, p}: n, p \in \N \}$ 
of subsets of $Z$ witnesses that $Z$ has property (*), then by \cite[Prop. 3.62]{HMVZ} the sets 
$K_{n, p}=X\bigcap M_{n, 4p}$\, , $n$, $p \in \N$ ensure that $X$ has property (*).

\textbf{(b)} It is easy to see that a Banach space $X$ has property (*) if there exists 
a countable family $\{M_{n, p} :n, p \in \mathbb{N} \}$ of ( convex and symmetric) 
subsets of $B_X$ such that assertions (i) and (ii) of Prop. \ref{P:subspace} 
are satisfied for $B_X$.
\end{rem}
\begin{thm}\label{T:*SWKA}
Let $X$ be a Banach space with property (*). Then $X$ is SWKA and subspace of a WCG Banach space.
\end{thm}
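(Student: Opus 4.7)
The plan is to derive the two conclusions separately from property (*). For the ``subspace of WCG'' part, notice that condition (ii) of Definition \ref{D:*} applied to singletons $\{x\}$ (which are weakly compact) yields $X = \bigcup_{n=1}^\infty M_{n,p}$ for every $p\in\N$, while condition (i) says each $M_{n,p}$ is $\tfrac{1}{p}$--weakly relatively compact. These are precisely the hypotheses of the Fabian--Montesinos--Zizler characterization stated right before the theorem, so $X$ is a subspace of some WCG Banach space.

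For the SWKA conclusion, I would first replace $M_{n,p}$ by $\bigcup_{k\le n}M_{k,p}$ so that $(M_{n,p})_n$ is increasing in $n$ for each fixed $p$ (this preserves both (i) and (ii)). Then for every $\sigma=(\sigma(1),\sigma(2),\ldots)\in\Sigma$ set
\[
F(\sigma)=\bigcap_{p=1}^{\infty}\overline{M_{\sigma(p),p}}^{\,w^*}\subseteq X^{**}.
\]
The goal is to show that $F$ is a compact-covering usco map $\Sigma\to\mathcal{K}(X,w)$, which is exactly the definition of $(X,w)$ being strongly $\mathcal{K}$--analytic.

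Three points need verification. \emph{First}, $F(\sigma)\subseteq X$: by (i) each $\overline{M_{\sigma(p),p}}^{\,w^*}\subseteq X+\tfrac{1}{p}B_{X^{**}}$, and since $X$ is norm-closed in $X^{**}$ one has $\bigcap_{p}(X+\tfrac{1}{p}B_{X^{**}})=X$. So $F(\sigma)$ is a $w^*$--compact subset of $X^{**}$ lying in $X$, hence weakly compact in $X$. \emph{Second}, $F$ is compact-covering (and in particular $F(\Sigma)=X$): given weakly compact $L\subseteq X$, condition (ii) provides for each $p$ an index $\sigma(p)$ with $L\subseteq M_{\sigma(p),p}$, so $L\subseteq F(\sigma)$ for the resulting $\sigma$.

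\emph{Third}, and the main technical step, $F$ is upper semicontinuous. Let $V$ be weakly open in $X$ with $F(\sigma)\subseteq V$, and write $V=U\cap X$ for some $w^*$--open $U\subseteq X^{**}$. The sets
\[
E_n(\sigma)=\bigcap_{p=1}^{n}\overline{M_{\sigma(p),p}}^{\,w^*}
\]
form a decreasing sequence of $w^*$--compact subsets of $X^{**}$ whose intersection $F(\sigma)$ is contained in $U$, so compactness forces $E_N(\sigma)\subseteq U$ for some $N$. For any $\tau\in\Sigma$ with $\tau|N=\sigma|N$ we have $E_N(\tau)=E_N(\sigma)\subseteq U$, and repeating the first point gives $F(\tau)\subseteq X$; therefore $F(\tau)\subseteq E_N(\tau)\cap X\subseteq U\cap X=V$. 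Since $\{\tau:\tau|N=\sigma|N\}$ is a basic neighborhood of $\sigma$ in $\Sigma$, this establishes the usco property and completes the proof. I expect the usco verification to be the delicate step, because one must pass between the $w^*$--topology of $X^{**}$ (where the finite-intersection compactness argument lives) and the weak topology of $X$ (where upper semicontinuity is required); the bridge is precisely the trick of writing a weakly open set in $X$ as the trace of a $w^*$--open set in $X^{**}$.
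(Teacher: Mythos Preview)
Your proof is correct and follows essentially the same approach as the paper: both invoke the Fabian--Montesinos--Zizler characterization for the WCG part and define the same usco map $F(\sigma)=\bigcap_{p}\overline{M_{\sigma(p),p}}^{\,w^*}$ for the SWKA part. The paper merely asserts that $F$ is well defined and usco, referring the reader to \cite[Prop.~3.1]{KM}, whereas you spell out the verification (including the $w^*/w$ bridge) in full; the content is the same.
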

\begin{proof}
Let $\{M_{n,p} :n, p \in \mathbb{N} \}$ be a family of subsets of $X$ that 
witnesses property (*) of $X$.

The fact that the space $X$ is a subspace of a WCG Banach space follows immediately by \cite[Th. 1]{FMZ1}.
To show that $X$ is SWKA consider the map 
$F \colon \varSigma \to \mathcal{K}(X)$ with $F(\sigma)=\bigcap _{p=1}^\infty 
\overline{M}_{\sigma(p), p}^*$.
It is not difficult to see that $F$ is well defined and usco.
The proof is similar to the proof of direction 
(ii)$\Rightarrow$ (iii) of \cite[Prop. 3.1]{KM}, so we omit it.
\end{proof}
\begin{cor} \label{C:SWKA}
Every Banach space $X$, that is SWCG with respect to a Banach space $Z$ is SWKA.
\end{cor}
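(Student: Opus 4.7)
The plan is to deduce this as a direct application of the two preceding results, Proposition \ref{P:subspace} and Theorem \ref{T:*SWKA}, neither of which requires re-proving.

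First, I would invoke Proposition \ref{P:subspace}: since $X$ is SWCG with respect to $Z$, there is a family $\{M_{n,p}: n,p \in \mathbb{N}\}$ of subsets of $X$ such that each $M_{n,p}$ is $\tfrac{1}{p}$-weakly relatively compact and every weakly compact $L \subseteq X$ is contained in some $M_{n,p}$ for each fixed $p$. This is precisely the condition that $X$ has property (*), as recorded in Definition \ref{D:*}.

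Having property (*) in hand, I would then apply Theorem \ref{T:*SWKA}, which asserts that every Banach space with property (*) is SWKA (and, incidentally, a subspace of a WCG Banach space). This immediately yields the conclusion that $X$ is SWKA.

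There is no real obstacle here — the corollary is essentially a bookkeeping step, chaining the implications SWCG-with-respect-to-$Z$ $\Rightarrow$ property (*) $\Rightarrow$ SWKA. The substantive content lies in Proposition \ref{P:subspace} and Theorem \ref{T:*SWKA}, both already established. The only thing worth noting in the write-up is that the family produced in Proposition \ref{P:subspace} satisfies Definition \ref{D:*} verbatim (after, if one wishes, the harmless adjustments mentioned in the paragraph following Definition \ref{D:*} to make the $M_{n,p}$ convex, symmetric, and increasing in $n$).
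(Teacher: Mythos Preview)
Your proposal is correct and matches the paper's own proof essentially verbatim: the paper simply says the corollary is immediate from Theorem~\ref{T:*SWKA} and Definition~\ref{D:*}, relying (as you do) on the observation just after Definition~\ref{D:*} that Proposition~\ref{P:subspace} gives property~(*) for any $X$ that is SWCG with respect to some $Z$.
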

\begin{proof}
It is immediate from Th. \ref{T:*SWKA} and Def. \ref{D:*}.
\end{proof}
The next result extends a result of Schl\"uchterman and Wheeler (\cite[Th. 2.5]{SW}) 
in the wider class of Banach spaces with property (*).
\begin{thm} \label{T:wsc}
Let $X$ be a Banach space with property (*). Then $X$ is weakly sequentially complete.
\end{thm}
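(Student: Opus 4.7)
Let $(x_n)\subseteq X$ be weakly Cauchy. By the uniform boundedness principle $(x_n)$ is bounded, and by $w^*$-compactness of bounded subsets of $X^{**}$ it $w^*$-converges to a unique $x^{**}\in X^{**}$. The goal is $x^{**}\in X$; since $X=\bigcap_{p\in\mathbb{N}}\bigl(X+\tfrac{1}{p}B_{X^{**}}\bigr)$ (the right-hand side being the norm-closure of $X$ in $X^{**}$), it suffices to verify $d(x^{**},X)\leq 1/p$ for every $p\in\mathbb{N}$.

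Fix $p$ and let $\{M_{n,p}\}_n$ witness property (*); by the remarks after Definition \ref{D:*} we may assume each $M_{n,p}$ is convex and symmetric and the sequence is increasing in $n$. The main reduction is: if we can locate $n_0$ with $\{x_n\}_n\subseteq M_{n_0,p}$, then
\[
x^{**}\in\overline{\{x_n\}}^{w^*}\subseteq\overline{M_{n_0,p}}^{w^*}\subseteq X+\tfrac{1}{p}B_{X^{**}},
\]
giving the required distance bound. The entire argument therefore reduces to producing such an $n_0$.

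For this core step I exploit that the successive differences $(x_{n+1}-x_n)_n$ form a weakly null sequence (since $(x_n)$ is weakly Cauchy), so $\{x_{n+1}-x_n\}_n\cup\{0\}$ is a weakly compact subset of $X$ and by (*)(ii) is absorbed by some $M_{K,p}$; enlarging $K$ so as also to absorb the singleton $\{x_1\}$, we have $x_1, x_{n+1}-x_n\in M_{K,p}$ for every $n$. The telescoping identity $x_n=x_1+\sum_{j=1}^{n-1}(x_{j+1}-x_j)$ exhibits each $x_n$ as a sum of elements of $M_{K,p}$. The desired uniform containment is then extracted by a Ces\`aro / convex-combination averaging argument which, in the spirit of the proof of \cite[Th.~2.5]{SW} for SWCG spaces, exploits the convex-symmetric structure of the $M_{n,p}$ and the $\tfrac{1}{p}$-approximation flexibility of property (*) (passing, if necessary, to a coarser parameter $p'\geq p$ whose approximation error can be absorbed into the $1/p$-budget). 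The principal technical obstacle lies precisely here: the naive inclusion $x_n\in nM_{K,p}$, coming from summing $n$ elements of the convex symmetric set $M_{K,p}$, grows linearly in $n$, and replacing it with a uniform containment is where the strength of property (*) must be used most delicately.
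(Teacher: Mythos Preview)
Your reduction is fine: showing that for each $p$ the sequence $(x_n)$ lies in some single $M_{n_0,p}$ does force $x^{**}\in X$. You also correctly observe that the weakly null sequence $(x_{n+1}-x_n)$ is absorbed by a single $M_{K,p}$. The gap is the step you yourself flag. Full telescoping gives $x_n\in nM_{K,p}$, and no Ces\`aro or convex-combination trick recovers a uniform bound from this: averaging $n$ summands in $M_{K,p}$ produces $\tfrac{1}{n}x_n\in M_{K,p}$, not $x_n$ in a fixed set, and the $\tfrac{1}{p}$-slack in property~(*) cannot absorb an error that grows linearly in $n$. As written, the argument stops precisely where the real work begins.

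The paper avoids this growth entirely by never telescoping. It argues by contradiction: if no single $2K_{m,p}$ contains all $x_n$, one extracts a subsequence $(x_{k_n})$ for which the indices $m_1(k_n)=\min\{m:x_{k_n}\in 2K_{m,p}\}$ and $m_2(k_n)=\min\{m:x_{k_n}\in K_{m,p}\}$ satisfy $m_1(k_{n+1})>m_2(k_n)$. The crucial point is the \emph{one-step} decomposition
\[
x_{k_{n+1}}=(x_{k_{n+1}}-x_{k_n})+x_{k_n}\in K_{m_0,p}+K_{m_2(k_n),p}\subseteq 2K_{m_2(k_n),p}
\]
(once $m_2(k_n)\ge m_0$), which expresses $x_{k_{n+1}}$ as a sum of only \emph{two} elements rather than $n$. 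This forces $m_1(k_{n+1})\le m_2(k_n)$, contradicting the construction. The doubling $K\mapsto 2K$ and the subsequence extraction are exactly the devices that replace your unbounded telescoping sum; without them the argument does not close.
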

\begin{proof}  
Let $\{K_{m, p} : m, p \in \mathbb{N} \}$ be a family of closed 
convex symmetric subsets of $X$ that witnesses property (*) and 
$K_{m, p} \subseteq K_{m+1, p}$ for all $m$, $p \in \mathbb{N}$.

Let $(x_n)$ be a weakly Cauchy sequence of $X$. Then there exists $x^{**} \in X^{**}$ such that 
$x_n \stackrel{w^*}{\rightarrow} x^{**}$. Assume that the set 
$\{ x_n : n \in \mathbb{N}\}$ is not weakly relatively compact. The family 
$\{ 2K_{m, p} : m, \, p \in \mathbb{N}\}$ consists of bounded, convex symmetric subsets and satisfies 
the following conditions:
\begin{enumerate}
	\item [(a)] For every $m$, $p$ the set $2K_{m,p}$ is $\frac{2}{p}$--weakly relatively compact.
	\item [(b)] For every $m$, $p \in \mathbb{N}$ holds $2K_{m, p} \subseteq 2K_{m+1, p}$.
	\item [(c)] For every weakly compact subset $L$ of $X$ and for every $p \in \mathbb{N}$ there exists 
	$m \in \mathbb{N}$ such that $L \subseteq 2K_{m,p}$.
\end{enumerate}	
Then there exists $p \in \mathbb{N}$ such that 
$\{ x_n : n \in \mathbb{N} \} \nsubseteq 2K_{m, p}$ for all $m \in \mathbb{N}$. 
Thus for every $m \in \mathbb{N}$ there exists $n_m \in \mathbb{N}$ such that $x_{n_m} \notin 2K_{m, p}$. 
Moreover for each $m \in \mathbb{N}$ infinitely many terms of the sequence do not belong to $2K_{m,p}$. 
For every $n \in \mathbb{N} $ define	$m_1(n)= \min \{m \in \mathbb{N} : x_n \in 2K_{m, p} \}$
and	$m_2(n)= \min \{m \in \mathbb{N} : x_n \in K_{m, p} \}$.
Clearly $m_1(n) \leq m_2(n)$ for every $n \in \mathbb{N}$, $x_n \in 2K_{m_1(n), p}$ and 
$x_n \notin 2K_{\lambda, p}$ if and only if $\lambda < m_1(n)$.

We can choose inductively a subsequence $(x_{k_n})$ of $(x_n)$ such that (1)
$m_1(k_{n+1}) > m_2(k_{n})$ for all $n \in \mathbb{N}$. 
Choose $k_1 =1$. Assume that $k_1 < k_2< \cdots <k_n$ have been chosen. For 
$m=m_2(k_n)$ there exists $n_m$ such that $x_{n_m} \notin 2K_{m, p}$ and $n_m>k_n$. 
Put $k_{n+1}=n_m$ and then we have $m_1(k_{n+1})>m=m_2(k_n)$.
The sequence $(x_{k_n})$ is weakly Cauchy, hence there exists $m_0 \in \mathbb{N}$ such that 
$ x_{k_{n+1}} - x_{k_n} \in K_{m_o, p}$  for all $n \in \mathbb{N}$. 
Then it follows $x_{k_{n+1}} =(x_{k_{n+1}}-x_{k_n})+x_{k_n} \in K_{m_o, p}+K_{{m_2}(k_n), p}$ 
for all $n \in \mathbb{N}$.
Choose $m_2(k_n) \geq m_o$. Then 
$K_{m_o, p}+K_{m_2(k_n), p} \subseteq 2K_{m_2(k_n), p}$, so 
$x_{k_{n+1}} \in 2K_{m_2(k_n), p}$;
consequently 
$m_1(k_{n+1}) \leq m_2(k_n)$, which contradicts (1). 
We conclude then that the set $\{ x_n : n \in \mathbb{N} \}$ is weakly relatively compact, 
hence $x^{**} \in X$ and the space $X$ is weakly sequentially complete.
\end{proof}
\begin{cor} \label{C:wsc}
Every Banach space $X$ that is SWCG with respect to a Banach space $Z$ is weakly sequentially complete.
\end{cor}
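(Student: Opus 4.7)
The plan is to deduce this as an immediate consequence of Theorem \ref{T:wsc} together with Proposition \ref{P:subspace}. Since Theorem \ref{T:wsc} states that any Banach space with property (*) is weakly sequentially complete, it suffices to verify that a Banach space $X$ which is SWCG with respect to a Banach space $Z$ automatically has property (*) in the sense of Definition \ref{D:*}.

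But this is exactly the content of Proposition \ref{P:subspace}: starting from a weakly compact set $K \subseteq Z$ that strongly generates $X$, one forms the family
\[
M_{n,p} = X \cap \bigl( nK + \tfrac{1}{4p} B_Z \bigr), \quad n, p \in \mathbb{N},
\]
and the proposition verifies conditions (i) and (ii) of Definition \ref{D:*} (the factor $4$ gives $\tfrac{1}{p}$-weak relative compactness in $X$ via \cite[Prop.~3.62]{HMVZ}, while the strong generation of $X$ by $K$ yields the covering property for weakly compact $L \subseteq X$). Thus $X$ has property (*).

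Hence the proof reduces to a single line: by Proposition \ref{P:subspace} (equivalently, by Definition \ref{D:*} and the remark following it), $X$ has property (*), and therefore by Theorem \ref{T:wsc} the space $X$ is weakly sequentially complete. There is no real obstacle here — the work has already been carried out in the two results just cited; the corollary is simply the concatenation of these two implications.
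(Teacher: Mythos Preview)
Your proof is correct and follows exactly the same route as the paper: the paper's proof reads ``It is immediate from Th.~\ref{T:wsc}, as $X$ has property (*),'' which is precisely your argument via Proposition~\ref{P:subspace} (cf.\ Definition~\ref{D:*}). The extra detail you include about the sets $M_{n,p}$ just recapitulates the content of Proposition~\ref{P:subspace}.
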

\begin{proof}
It is immediate from Th. \ref{T:wsc}, as $X$ has property (*).
\end{proof}
\begin{cor} \label{C:reflex}
Let $X$ be a Banach space not containing $\ell_1(\mathbb{N})$. Then $X$ has property (*) 
if and only if it is reflexive.
\end{cor}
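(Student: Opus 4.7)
The plan is to treat the two implications separately; one is a triviality, and the other combines Theorem \ref{T:wsc} with Rosenthal's $\ell_1$ theorem.

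For the ``if'' direction, suppose $X$ is reflexive. Then every bounded subset of $X$ is already weakly relatively compact, so in particular $0$-weakly relatively compact. Setting $M_{n,p}=nB_X$ for all $n,p\in\mathbb{N}$ clearly witnesses property (*): condition (i) of Prop. \ref{P:subspace} is immediate since $\overline{nB_X}^{w^*}=nB_X\subseteq X$, and condition (ii) holds because every weakly compact $L\subseteq X$ is norm-bounded, hence contained in some $nB_X$.

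For the ``only if'' direction, assume $X$ has property (*) and does not contain an isomorphic copy of $\ell_1(\mathbb{N})$. By Theorem \ref{T:wsc}, $X$ is weakly sequentially complete. Now let $(x_n)$ be an arbitrary bounded sequence in $X$. By Rosenthal's $\ell_1$ theorem, since $X$ does not contain $\ell_1(\mathbb{N})$, $(x_n)$ admits a weakly Cauchy subsequence $(x_{k_n})$. Weak sequential completeness of $X$ then forces $(x_{k_n})$ to converge weakly to some point of $X$. Hence every bounded sequence in $X$ has a weakly convergent subsequence, i.e.\ $B_X$ is weakly sequentially compact. By the Eberlein--\v{S}mulian theorem, $B_X$ is weakly compact, and therefore $X$ is reflexive.

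The only nontrivial ingredient is the nonreflexive direction, but its structure is really just the well-known chain ``weakly sequentially complete $+$ no $\ell_1$ $\Rightarrow$ reflexive''; the work of showing weak sequential completeness has already been done in Theorem \ref{T:wsc}, so no additional obstacle should arise.
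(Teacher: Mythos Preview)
Your proof is correct and follows essentially the same route as the paper's: the ``if'' direction is the trivial observation that reflexivity yields property (*) (the paper phrases it as reflexive $\Rightarrow$ SWCG $\Rightarrow$ property (*), while you write down the witnessing family $M_{n,p}=nB_X$ directly), and the ``only if'' direction is exactly Theorem~\ref{T:wsc} combined with Rosenthal's $\ell_1$ theorem. Your version merely unpacks the standard implication ``weakly sequentially complete $+$ no $\ell_1$ $\Rightarrow$ reflexive'' via Eberlein--\v{S}mulian, whereas the paper cites it in one line.
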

\begin{proof}
If $X$ is reflexive, then its closed unit ball $B_X$ is a weakly compact set, 
hence $X$ is SWCG and consequently has property (*).

Assume that $X$ has property (*). Then according to Th. \ref{T:wsc}  $X$ is weakly 
sequentially complete and as $\ell_1(\mathbb{N})$ is not contained in $X$, 
by Rosenthal's $\ell_1$-theorem follows that $X$ is reflexive.
\end{proof}
\begin{cor} \label{C:Asplund}
Let $X$ be an Asplund Banach space (In particular $X$ has separable dual). If $X$ has property (*), 
then it is reflexive.
\end{cor}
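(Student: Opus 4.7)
The plan is to deduce this corollary directly from Corollary \ref{C:reflex}, whose hypothesis is precisely that the space does not contain an isomorphic copy of $\ell_1(\mathbb{N})$. So the only thing to verify is that an Asplund Banach space cannot contain $\ell_1(\mathbb{N})$; once this is established, Corollary \ref{C:reflex} applies immediately and yields reflexivity.

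For the non-containment of $\ell_1$, I would invoke the standard characterization that a Banach space is Asplund if and only if every separable subspace has separable dual. Since $\ell_1(\mathbb{N})^* = \ell_\infty(\mathbb{N})$ is not separable, $\ell_1(\mathbb{N})$ itself is not Asplund, and the Asplund property passes to closed subspaces; therefore no Asplund space can contain $\ell_1(\mathbb{N})$ isomorphically. In the parenthetical case emphasized in the statement, where $X$ has separable dual, this is even more transparent: a subspace isomorphic to $\ell_1(\mathbb{N})$ would induce, by Hahn--Banach, a quotient of $X^*$ isomorphic to $\ell_\infty(\mathbb{N})$, which is absurd as $X^*$ is separable.

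Combining these two ingredients, the proof is essentially a one-liner: $X$ is Asplund, hence $\ell_1(\mathbb{N}) \not\hookrightarrow X$; $X$ has property (*); apply Corollary \ref{C:reflex} to conclude that $X$ is reflexive. There is no real obstacle here, as the work has been done in Theorem \ref{T:wsc} and Corollary \ref{C:reflex}; the present corollary is simply a specialization to a commonly used class of spaces.
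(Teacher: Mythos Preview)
Your proposal is correct and matches the paper's own proof essentially line for line: the paper also notes that every separable subspace of an Asplund space has separable dual, hence $\ell_1(\mathbb{N})$ does not embed into $X$, and then invokes Corollary~\ref{C:reflex}. Your additional remarks (hereditary Asplund property, the Hahn--Banach quotient argument in the separable-dual case) are fine elaborations but not needed for the argument.
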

\begin{proof}
Every separable subspace of $X$ has separable dual, so $X$ does not contain $\ell_1(\mathbb{N})$. 
According to Cor. \ref{C:reflex} $X$ is reflexive.
\end{proof}

We recall that a Banach space is called Polish if its closed unit ball $(B_X, w)$ is a Polish topological space. 
We also say that a Banach space $X$ has \v{C}ech complete ball if $B_X$ is a 
$G_\delta$ subset of $(B_{X^{**}}, w^*)$. These classes of Banach spaces are introduced and 
studied by Edgar and Wheeler in \cite{EW} (see also \cite{R2}).
\begin{cor} \label{C:Cech}
Let $X$ be a non reflexive Banach space with \v{C}ech complete ball $(B_X, w)$ (In particular, $X$ 
is a non reflexive Polish Banach space). Then $X$ does not have property (*).
\end{cor}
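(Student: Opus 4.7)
The plan is to obtain the contradiction by invoking Corollary \ref{C:reflex}. That corollary states that any Banach space with property (*) which does not contain $\ell_1$ must be reflexive, so it would suffice to show that the \v{C}ech completeness of $(B_X,w)$ rules out $X$ containing an isomorphic copy of $\ell_1$. Since $X$ is assumed non-reflexive, this yields the desired contradiction.

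To this end, I would assume for contradiction that there exists a closed subspace $Y$ of $X$ isomorphic to $\ell_1$. Since $Y$ is norm-closed and convex in $X$, by Mazur's theorem $B_Y = Y \cap B_X$ is a weakly closed subset of $B_X$, and the weak topology on $Y$ coincides with the restriction of the weak topology on $X$ to $Y$. \v{C}ech completeness is hereditary with respect to closed subspaces of Tychonoff spaces, so $(B_{\ell_1}, w) \cong (B_Y, w)$ would inherit \v{C}ech completeness from $(B_X,w)$.

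The main obstacle is to contradict this last conclusion. I would invoke the analysis of Polish and weakly \v{C}ech complete Banach spaces carried out by Edgar and Wheeler \cite{EW}: they show that $\ell_1$ is not Polish, and more precisely that $B_{\ell_1}$ fails to be a $G_\delta$ subset of $(B_{\ell_1^{**}}, w^*) = (B_{\ell_\infty^*}, w^*)$. The intuitive reason is that $\ell_\infty$ is not norm separable, so $(B_{\ell_1}, w)$ is not even weakly metrizable; combined with the abundance of purely finitely additive measures in $\ell_\infty^* \setminus \ell_1$, this precludes any $G_\delta$ representation of $B_{\ell_1}$ inside its bidual ball. This contradicts the previous paragraph, so $X$ contains no copy of $\ell_1$, and Corollary \ref{C:reflex} then forces $X$ to be reflexive, contradicting the hypothesis. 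The ``in particular'' clause is immediate, since a Polish Banach space is by definition one whose weak unit ball is Polish, hence in particular \v{C}ech complete.
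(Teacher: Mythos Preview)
Your approach is correct and ultimately rests on the same external input as the paper's proof, but it reaches it by a more circuitous route.

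The paper's argument is a two-liner: Edgar and Wheeler \cite{EW} prove that any Banach space whose unit ball is weakly \v{C}ech complete is Asplund, and then Corollary~\ref{C:Asplund} immediately gives reflexivity. Your route---assuming $\ell_1\hookrightarrow X$, passing \v{C}ech completeness down to $(B_{\ell_1},w)$ by heredity to closed subsets, and then invoking \cite{EW} to contradict this---is valid, but note that the specific fact you need (that $B_{\ell_1}$ is not $G_\delta$ in $B_{\ell_\infty^*}$) is not really an independent result in \cite{EW}; it is a consequence of their theorem ``\v{C}ech complete ball $\Rightarrow$ Asplund'' together with the observation that $\ell_1^*=\ell_\infty$ is nonseparable. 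So you are implicitly invoking the same Edgar--Wheeler theorem the paper uses, only applied to the subspace $Y\cong\ell_1$ rather than to $X$ itself. The ``intuitive reason'' you offer (non-metrizability, finitely additive measures) is heuristic and does not substitute for that citation. One small technical point: an isomorphism $T\colon\ell_1\to Y$ need not carry $B_{\ell_1}$ onto $B_Y$, but this is harmless after rescaling, since $T(B_{\ell_1})$ is still a weakly closed, bounded, convex set inside some $nB_X$, and $nB_X$ is weakly homeomorphic to $B_X$.
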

\begin{proof}
Every Banach space with \v{C}ech complete ball is Asplund (\cite{EW}).
So the result follows from Cor. \ref{C:Asplund}.
\end{proof}
It is known that an $\ell_p$--direct sum ($1 \leq p < +\infty $) 
of SWKA Banach spaces is SWKA (cf. \cite{KM}). We shall show now that 
property (*) is not preserved by $\ell_p$--direct sums ($p>1$), 
even if each space is SWCG.

\begin{thm} \label{T:sum}
The Banach space
\[
X=\left (\sum_{m=1}^\infty\oplus \ell_1 (\mathbb{N} \times \{ m \} )\right )_2
\] 
does not have property (*), although it is SWKA and weakly sequentially complete. In particular it is not SWCG 
with respect to any Banach space.
\end{thm}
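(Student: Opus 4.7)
The statement splits into three claims. For SWKA and weak sequential completeness (wsc) of $X$, I appeal to standard stability results: each summand $\ell_1(\mathbb{N}\times\{m\})$ is SWCG (hence SWKA by Corollary~\ref{C:SWKA}) and wsc (by Schur's property), and both properties pass to $\ell_p$-direct sums for $1\le p<\infty$ (the SWKA case is the result quoted in the paragraph immediately preceding the theorem; wsc is classical). The substantive task is to show $X$ lacks property~(*), which I plan to do by contradiction.

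Suppose $\{M_{n,p}\}$ is a family witnessing property~(*); by Remark~\ref{R:*} and the discussion after Definition~\ref{D:*}, I take the $M_{n,p}$ closed, convex, symmetric and increasing in $n$. Writing the canonical basis of $X=\bigl(\bigoplus_{m}\ell_1^{(m)}\bigr)_2$ as $(e_{k,m})$, where $\ell_1^{(m)}:=\ell_1(\mathbb{N}\times\{m\})$, I put
\[
K_{m,n}:=\{k\in\mathbb{N}:e_{k,m}\in M_{n,2}\}.
\]
Since singletons are weakly compact, property~(*) yields $\bigcup_{n}K_{m,n}=\mathbb{N}$ for every $m$. The crucial step will be to show that $K_{m,n}\neq\mathbb{N}$ for all $m,n$. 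Indeed, $K_{m,n}=\mathbb{N}$ would place $\{\pm e_{k,m}:k\in\mathbb{N}\}$ inside $M_{n,2}$; by closed-convex-symmetry this forces $B_{\ell_1^{(m)}}\subseteq M_{n,2}$, and taking $w^{*}$-closure in $X^{**}=\bigl(\bigoplus_{m}(\ell_1^{(m)})^{**}\bigr)_2$ puts $B_{(\ell_1^{(m)})^{**}}$ inside $\overline{M_{n,2}}^{\,w^{*}}$. Any pure Banach limit $\mu$ on $\ell_\infty(\mathbb{N}\times\{m\})$ satisfies $\|\mu\|=1$ and, testing against the constants $\pm\mathbf{1}$, $\mathrm{dist}(\mu,\ell_1^{(m)})=1$; the $\ell_2$-structure of $X^{**}$ (so $\|\xi-y\|^{2}=\sum_{i}\|\xi_{i}-y_{i}\|^{2}$) then yields $\mathrm{dist}(\mu,X)=1$, contradicting that $M_{n,2}$ is $\tfrac{1}{2}$-weakly relatively compact.

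To finish, consider the closed sets $F_n:=\prod_{m}K_{m,n}\subseteq\Sigma=\mathbb{N}^{\mathbb{N}}$. For each $\sigma\in\Sigma$, the sequence $(e_{\sigma(m),m})_m$ is weakly null in $X$: any $f=(f_m)\in X^{*}=\bigl(\bigoplus_m\ell_\infty^{(m)}\bigr)_2$ satisfies $|\langle e_{\sigma(m),m},f\rangle|\le\|f_m\|_\infty\to 0$, because $(\|f_m\|_\infty)_m\in\ell_2$. Hence $L^{(\sigma)}:=\{e_{\sigma(m),m}:m\in\mathbb{N}\}\cup\{0\}$ is weakly compact in $X$, and property~(*) gives $L^{(\sigma)}\subseteq M_{n,2}$ for some $n$, i.e.\ $\sigma\in F_n$. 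Thus $\Sigma=\bigcup_n F_n$. But the crucial step above forbids any basic neighbourhood of $\Sigma$ from lying inside an $F_n$ (this would require $K_{m,n}=\mathbb{N}$ for all sufficiently large $m$), so each $F_n$ is closed and nowhere dense. This contradicts the Baire property of the Polish space $\Sigma$. I expect the main obstacle to be the $w^{*}$-closure computation that places a pure Banach limit at distance exactly $1$ from $X$; the rest is a Baire-category bookkeeping argument on the index sets $K_{m,n}$.
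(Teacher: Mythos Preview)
Your argument is correct and follows the same overall scheme as the paper: cover $\Sigma$ by countably many sets indexed by the $M_{n,p}$, invoke Baire, and conclude that some $M_{n_0,p}$ contains an entire copy of the $\ell_1$--basis, contradicting $\varepsilon$--weak relative compactness. The difference lies in the test family of weakly compact sets. The paper uses the unit balls $B_\sigma$ of the reflexive subspaces $X_\sigma=\bigl(\sum_m \ell_1(\{1,\ldots,\sigma(m)\})\bigr)_2$ and the sets $A_n=\{\sigma:B_\sigma\subseteq M_{n,p}\}$, which are not obviously closed, so it appeals to Talagrand's Baire argument from \cite[Th.~4.3]{T1}. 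You instead use the diagonal weakly null sequences $L^{(\sigma)}=\{e_{\sigma(m),m}:m\}\cup\{0\}$, which makes your sets $F_n=\prod_m K_{m,n}$ manifestly closed products, so ordinary Baire applies directly. This is a modest but genuine simplification; it also makes the nowhere-density of $F_n$ transparent (one missing index in each factor suffices). Your route to weak sequential completeness via stability of wsc under $\ell_p$--sums is also fine, though the paper instead observes that $X$ has an unconditional basis and does not contain $c_0$.

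One small imprecision: ``testing against the constants $\pm\mathbf{1}$'' does not by itself give $\mathrm{dist}(\mu,\ell_1)=1$, since $|(\mu-y)(\mathbf{1})|=|1-\sum_k y_k|$ can be small. The correct observation is that any Banach limit (indeed any $w^*$--cluster point of $(e_k)$ in $B_{\ell_\infty^*}$) lies in $c_0^\perp$ and has norm $1$; then for $y\in\ell_1$ and any $\varepsilon>0$ one tests against a tail indicator $f=\mathbf{1}_{\{k>N\}}$ with $N$ large enough that $\sum_{k>N}|y_k|<\varepsilon$, obtaining $|(\mu-y)(f)|\geq 1-\varepsilon$. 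With this fix the distance computation in $X^{**}$ goes through exactly as you wrote.
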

\begin{proof}
The space $X$ is by \cite[Prop. 5.1]{KM} SWKA, as $\ell_1(\mathbb{N})$ is SWCG. Furtheremore  
	\[X=Y^* \quad \text{with} \quad 
	Y=\left (\sum_{m=1}^\infty\oplus c_0 (\mathbb{N} \times \{ m \} )\right )_2
\]
and $X$ has as an unconditional basis, the double sequence   
	\[e_{(n, m)}=(\underbrace{0,\ldots ,0}_{m-1},\bar{e}_{(n, m)},0, \ldots ,0) \quad n,m \in \N,
\]
where 
$\bar{e}_{(n, m)}$, $n \in \N$ is the usual basis of $\ell_1(\N)=\ell_1(\N \times \{m\})$.
(In the sequel we identify $\bar{e}_{(n, m)} \in \ell_1(\N)$ with $e_{(n, m)} \in X$.) 
The space $X$ does not contain $c_0(\N)$, as it is SWKA 
(cf. \cite[Prop. 1.9, Cor 1.10]{MS} (ii)), thus 
it is weakly sequentially complete (\cite[Th. 1.c.10]{LT}).

Assume, towards a contradiction that $X$ has property (*). Then there exists a family 
$\{M_{n,p}: n,p \in \mathbb{N} \}$ consisting of closed, convex and symmetric subsets of 
$B_X$ such that 
\begin{enumerate}[\upshape (i)]
	\item For every $n$, $p \in \mathbb{N}$ the set $M_{n,p}$ is $\frac{1}{p}$--weakly relatively compact.
	\item For every $p \in \mathbb{N}$ and for every weakly compact subset $L$ of $B_X$ 
	there exists $n \in \mathbb{N}$ with $L \subseteq M_{n,p}$.
\end{enumerate}
For every $\sigma \in \varSigma$ consider the space
	\[X_\sigma=\left( \sum_{m=1}^\infty \oplus \ell _1(\{1,2, \ldots \sigma (m)) \right)_2 \, ,
\]
which is reflexive and satisfies $X_\sigma \subseteq X$. By  $B_\sigma$ we denote the closed 
unit ball of $X_\sigma$. Then $\bigcup_{\sigma \in \Sigma} B_\sigma \subseteq B_X$ and for every 
$\sigma \in \varSigma$ the set $B_\sigma$ is weakly compact.

Let $p>2$. For every $n \in \mathbb{N}$ put $A_n =\{\sigma \in \varSigma : B_\sigma \subseteq M_{n,p} \}$; 
then we have $\varSigma =\bigcup_{n=1}^\infty A_n$.
Using a Baire category argument, due to Talagrand (cf. \cite[Th. 4.3]{T1}) we find $n_0 \in \mathbb{N}$ 
and an infinite subset $D=\{\sigma_k :k \in \mathbb{N} \}$ of $A_{n_0}$ such that for some 
$s_0 \in S$ holds 
$s_0< \sigma _k$ and $\sigma _k(m_0+1)=k$ for all $k \in \mathbb{N}$, 
where $m_0=|s_0|$= the length of the finite sequence $s_0$. As $D \subseteq A_{n_0}$, it follows that 
$B_{\sigma _k} \subseteq M_{n_0,p}$ for every $k \in \mathbb{N}$. Consequently 
\[
	\{e_{(n, m_0+1)} :1\leq n \leq k = \sigma_k (m_0+1)\} \subseteq B_{\sigma_k} \subseteq M_{n_0,p} \quad 
	\text{for every} \quad k \in \mathbb{N} \, .
\]
Thus, we conclude that 
$M=\{e_{(n, m_0+1)} : n \in \N \} \subseteq M_{n_0, p}$.
But this means that the usual basis of $\ell_1(\N)$ is $\frac{1}{p}$--weakly relatively compact 
subset of $X$, so $\frac{2}{p}$--weakly relatively compact subset of $\ell_1(\N)$, which is false 
as $\frac{2}{p} <1$ and the usual basis of $\ell_1(\N)$ is not $\ee$--weakly relatively compact 
subset of $\ell_1(\N)$ for $0< \ee <1$.
\end{proof}
\begin{rem} \label{R:sum}
\textbf{(a)} The previous result provides us with still another example with the properties 
of \cite[Example 2.6]{SW}; that is a weakly sequentially complete and separable space 
that is not SWCG. Note that as it has been proved in \cite[Example 2.9]{MS} the example in 
\cite{SW}, in contrast with the present example, is not a SWKA space (see also \cite[Remark 4]{KM}).

\textbf{(b)} The result described in Th. \ref{T:sum} is generalized, with the same essentially proof 
for a direct sum of the form 
	\[X=\left (\sum_{m=1}^\infty\oplus \ell_1 (\mathbb{N} \times \{ m \} ) \right )_p , \quad 
	\text{where} \quad 1<p<+\infty \, .
\]
\end{rem}
\begin{cor} \label{C:lN}
Let $(X_n)$ be a sequence of Banach spaces such that $\ell_1(\N)$ embeds in $X_n$ 
for infinitely many $n \in \N$. Then the space 
	\[X=\left (\sum_{n=1}^\infty\oplus X_n \right )_p , \quad 
	\text{where} \quad 1<p<+\infty \, ,
\]
does not have property (*).
\end{cor}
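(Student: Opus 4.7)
The plan is to reduce the statement to Theorem \ref{T:sum} (in the generalized form of Remark \ref{R:sum}(b)) by exhibiting a ``bad'' closed subspace of $X$ and invoking the hereditary property of (*).

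First, I would use the hypothesis to extract an infinite set $\{n_k : k \in \N\}$ of indices such that $\ell_1(\N)$ embeds isomorphically into $X_{n_k}$ for every $k$; for each such $k$ I would fix a closed subspace $Y_k \subseteq X_{n_k}$ with $Y_k$ isomorphic to $\ell_1(\N)$, together with an isomorphism $T_k \colon \ell_1(\N) \to Y_k$. The next step is to observe that the $\ell_p$--sum $Y = \left(\sum_{k=1}^\infty \oplus Y_k\right)_p$ embeds isomorphically as a closed subspace of $X$: the natural map sending $(y_k)_k \in Y$ to the element of $X$ whose $n_k$--th coordinate is $y_k$ (and whose other coordinates are zero) is an isometry onto a closed subspace of $X$, because the norms on both sides are computed by the $\ell_p$--norm of the sequence of component norms.

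Next I would note that, via the isomorphisms $T_k$, the space $Y$ is isomorphic to $\left(\sum_{k=1}^\infty \oplus \ell_1(\N)\right)_p$ (with uniformly bounded distortion given by $\sup_k \max(\|T_k\|, \|T_k^{-1}\|)$; if necessary one may first rescale the $T_k$ so that they are uniformly bounded, which is possible since after replacing each $Y_k$ by a copy we can assume $\|T_k\| = \|T_k^{-1}\| = 1$ via an equivalent renorming, or one can work directly with the isomorphism class since property (*) is an isomorphic invariant by Remark \ref{R:*}(a) combined with the isomorphism considerations earlier in the section). By Remark \ref{R:sum}(b), the space $\left(\sum_{k=1}^\infty \oplus \ell_1(\N)\right)_p$ fails property (*) for every $1 < p < \infty$; hence $Y$ fails (*).

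Finally, I would invoke Remark \ref{R:*}(a), which tells us that property (*) passes to closed linear subspaces: if $X$ had (*), then the closed subspace isomorphic to $Y$ would inherit (*), contradicting the previous step. Therefore $X$ does not have property (*). The only mildly technical point is ensuring that the isomorphism class of the $\ell_p$--sum of the $Y_k$'s is indeed that of $\left(\sum_k \oplus \ell_1(\N)\right)_p$ (i.e.\ that the embeddings can be taken with uniformly controlled norms, so that the diagonal map is an isomorphism); this is standard, but it is the one place where care is needed.
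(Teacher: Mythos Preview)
Your approach matches the paper's, which simply says ``immediate from Th.~\ref{T:sum} and Remark~\ref{R:sum}'' and leaves the hereditary/embedding step implicit; you have correctly unpacked that step.

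One point deserves sharpening. Your suggestion to ``replace each $Y_k$ by a copy'' and force $\|T_k\| = \|T_k^{-1}\| = 1$ via an equivalent renorming does not work as stated: renorming $Y_k$ destroys the isometric inclusion $Y_k \subseteq X_{n_k}$, so the map $Y \hookrightarrow X$ would no longer be an isometry (or even bounded, a priori). The clean fix is James' $\ell_1$--distortion theorem: since $\ell_1(\N)$ embeds in $X_{n_k}$, one can choose the subspace $Y_k \subseteq X_{n_k}$ to be $2$--isomorphic (say) to $\ell_1(\N)$ for every $k$. With this uniform bound in hand, the diagonal map $\bigl(\sum_k \oplus \ell_1(\N)\bigr)_p \to \bigl(\sum_k \oplus Y_k\bigr)_p$ is a genuine isomorphism, and the rest of your argument goes through. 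The ``isomorphic invariance'' alternative you mention does not sidestep the issue, because without uniform control you have not yet exhibited any isomorphism between $Y$ and $\bigl(\sum_k \oplus \ell_1(\N)\bigr)_p$ to which invariance could be applied.
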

\begin{proof}
It follows immediately from Th. \ref{T:sum} and Remark \ref{R:sum}.
\end{proof}
\begin{cor} \label{C:sum}
Let $(X_n)$ be a sequence of Banach spaces, such that $X_n$ has property (*) for every 
$n \in \N$. If $X_n$ is not reflexive for infinitely many $n \in \N$, then the space 
	\[X=\left (\sum_{n=1}^\infty\oplus X_n \right )_p , \quad 
	\text{where} \quad 1<p<+\infty 
\]
does not have property (*).
\end{cor}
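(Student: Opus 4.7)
The plan is to reduce this corollary to Corollary \ref{C:lN} via Corollary \ref{C:reflex}. The intuition is simple: property (*) forces every non-reflexive space to contain $\ell_1(\mathbb{N})$, so the hypothesis of ``infinitely many non-reflexive summands'' automatically upgrades to ``infinitely many summands containing $\ell_1(\mathbb{N})$'', which is precisely the hypothesis of Corollary \ref{C:lN}.

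More concretely, first I would fix an infinite set $I \subseteq \mathbb{N}$ consisting of indices $n$ such that $X_n$ is not reflexive. For each such $n$, the space $X_n$ has property (*) by hypothesis but fails to be reflexive. By the contrapositive of Corollary \ref{C:reflex} (which asserts that a space with property (*) not containing $\ell_1(\mathbb{N})$ must be reflexive), the space $X_n$ must contain an isomorphic copy of $\ell_1(\mathbb{N})$.

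Thus $\ell_1(\mathbb{N})$ embeds into $X_n$ for every $n \in I$, and in particular for infinitely many $n \in \mathbb{N}$. Applying Corollary \ref{C:lN} directly to the sequence $(X_n)$ now yields that the $\ell_p$-sum
\[
X = \Bigl(\sum_{n=1}^\infty \oplus X_n\Bigr)_p, \qquad 1 < p < +\infty,
\]
does not have property (*), which is the desired conclusion.

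There is essentially no obstacle to this argument: it is just a two-step chain of previously established results, with Corollary \ref{C:reflex} doing the work of converting ``non-reflexive with (*)'' into ``contains $\ell_1(\mathbb{N})$'', and Corollary \ref{C:lN} doing the work of propagating this obstruction to the $\ell_p$-sum. The whole proof should fit in two or three lines.
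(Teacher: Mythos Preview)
Your argument is correct and is essentially identical to the paper's proof: the paper also invokes Cor.~\ref{C:reflex} to deduce that each non-reflexive $X_n$ with property~(*) contains $\ell_1(\mathbb{N})$, and then applies Cor.~\ref{C:lN} to finish. Your write-up is simply a more detailed version of the same two-line chain of implications.
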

\begin{proof}
If $X_n$ is not reflexive, then from Cor.\ref{C:reflex}, it contains $\ell_1(\N)$. 
By Cor. \ref{C:lN} the conclusion follows.
\end{proof}
\begin{cor}
Let $(X_n)$ be a sequence of SWCG Banach spaces, such that $X_n$ is not reflexive for infinitely many 
$ n \in \N $. Then the space
\[X=\left (\sum_{n=1}^\infty\oplus X_n \right )_p , \quad 
	\text{where} \quad 1<p<+\infty \, 
\]
is not isomorphic to any subspace of a SWCG Banach space.
\end{cor}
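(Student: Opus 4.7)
The plan is to derive a contradiction by pitting the positive result Cor.~\ref{C:sum} (which says $X$ fails property (*)) against the observation that a space isomorphic to a subspace of a SWCG Banach space must have property (*). All the ingredients are essentially in place; the proof will consist of assembling them.

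First, I would verify that $X$ fails property (*). Each $X_n$ is SWCG, hence has property (*) by Prop.~\ref{P:subspace} (applied with $Z=X_n$); since we assume infinitely many of the $X_n$ are non--reflexive, Cor.~\ref{C:sum} applies directly and yields that the $\ell_p$--sum $X$ does not have property (*). (Alternatively, and essentially equivalently, one can invoke Cor.~\ref{C:reflex} to see that each non--reflexive SWCG $X_n$ contains $\ell_1(\mathbb{N})$, and then apply Cor.~\ref{C:lN}.)

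Next, suppose towards a contradiction that $X$ is isomorphic to a subspace of some SWCG Banach space $W$; let $T\colon X\to W$ be an isomorphism onto its image, and set $Y=T(X)$. Then $Y$ is a closed linear subspace of $W$, so by Rem.~\ref{R:SWCG}(ii) the space $Y$ is SWCG with respect to $W$. Consequently, by Prop.~\ref{P:subspace}, $Y$ has property (*). The remaining point is to transfer property (*) from $Y$ back to $X$ through the isomorphism $T^{-1}\colon Y\to X$. This isomorphism is weakly--compact covering: for any weakly compact $L\subseteq X$ the set $T(L)\subseteq Y$ is weakly compact and maps onto $L$ under $T^{-1}$. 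Hence by Prop.~\ref{P:WCCSG}, $X$ is SWCG with respect to some Banach space $Z_1$, and so by Prop.~\ref{P:subspace} once more, $X$ has property (*). This contradicts the first paragraph, completing the proof.

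There is no real obstacle here; the only mildly delicate point is confirming the isomorphism--invariance of ``SWCG with respect to some superspace'', which is exactly what Prop.~\ref{P:WCCSG} was designed to provide (via the elementary observation that every isomorphism onto its image is weakly--compact covering onto that image). Everything else is a direct citation of earlier results in this section.
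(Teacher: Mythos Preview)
Your proof is correct and follows essentially the same route as the paper, which simply writes ``It is clear by Cor.~\ref{C:reflex} and Cor.~\ref{C:sum}.'' You have merely made explicit what the paper leaves implicit: that a subspace of a SWCG space has property~(*) and that this survives passage through an isomorphism (for which you invoke Prop.~\ref{P:WCCSG}, though one could equally well argue directly that property~(*) is an isomorphic invariant by rescaling the $\varepsilon$'s).
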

\begin{proof}
It is clear by Cor. \ref{C:reflex} and Cor. \ref{C:sum}.
\end{proof}

In contrast to Th. \ref{T:sum} an $\ell_1$--direct sum of a sequence $(X_n)$ of SWCG Banach spaces is 
SWCG (\cite[Prop. 2.9]{SW}). An analogous result can be proved if every $X_n$ is SWCG 
with respect to some superspace of it.
\begin{prop} \label{P:prodrel}
Let $(X_n)$, $(Z_n)$ be sequences of Banach spaces, such that $X_n \subseteq Z_n$, 
for all $n \in \N$ and every 
$X_n$ is SWCG with respect to $Z_n$. Then the space 
$X=(\sum_{n=1}^\infty \oplus X_n)_1 $ 
is SWCG with respect to the space 
$Z=(\sum_{n=1}^\infty \oplus Z_n)_1 $.
\end{prop}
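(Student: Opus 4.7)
The plan is to mimic the proof of \cite[Prop. 2.9]{SW} for $\ell_1$-sums of SWCG spaces, using the equivalent ``sequence'' formulation of Def. \ref{D:relSWCG} given in Rem. \ref{R:SWCG}(i). For each $n \in \mathbb{N}$, Rem. \ref{R:SWCG}(i) applied to $X_n \subseteq Z_n$ yields a sequence $(K_{n,k})_{k\in\mathbb{N}}$ of weakly compact, convex, symmetric subsets of $Z_n$ such that for every weakly compact $L \subseteq X_n$ and every $\delta>0$ there exists $k$ with $L \subseteq K_{n,k} + \delta B_{Z_n}$. For every pair consisting of an integer $N \in \mathbb{N}$ and a finite tuple $(k_1,\ldots,k_N) \in \mathbb{N}^N$, define
\[
\Omega_{N;\,k_1,\ldots,k_N} \;=\; \bigl\{(z_n)_n \in Z : z_n \in K_{n,k_n} \text{ for } n \leq N,\ z_n=0 \text{ for } n>N\bigr\}.
\]
The collection $\{\Omega_{N;\,k_1,\ldots,k_N}\}$ is countable; each of its members is convex and symmetric, and it is weakly compact in $Z$ because the finite $\ell_1$-sum $Z_1\oplus\cdots\oplus Z_N$ embeds isometrically (hence weak-to-weak continuously) as a closed subspace of $Z$, and on the finite sum the weak topology coincides with the product of the weak topologies of the $Z_n$.

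Next I would verify the strong generation property. Given a weakly compact set $L \subseteq X$ and $\varepsilon>0$, the key fact is the standard characterization of weak compactness in $\ell_1$-sums: any weakly compact $L \subseteq (\sum_n \oplus X_n)_1$ is bounded, has weakly compact projections $P_n(L) \subseteq X_n$ for each $n$, and satisfies the uniform tail condition
\[
\forall\,\eta>0\ \exists\, N_\eta \in \mathbb{N}\ \text{ such that }\ \sum_{n>N_\eta}\|P_n(x)\|_{X_n} < \eta \quad \text{for all } x\in L.
\]
Choose $N$ corresponding to $\eta = \varepsilon/2$. For each $n \leq N$, since $P_n(L)$ is weakly compact in $X_n$, apply the SWCG-with-respect-to-$Z_n$ property to pick $k_n$ with $P_n(L) \subseteq K_{n,k_n} + \tfrac{\varepsilon}{2N}B_{Z_n}$.

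Then, for any $x=(x_n) \in L$, write $x_n = y_n + w_n$ with $y_n \in K_{n,k_n}$ and $\|w_n\| < \varepsilon/(2N)$ for $n\leq N$, and set $y_n=0$, $w_n=x_n$ for $n>N$. The vector $y=(y_n)$ lies in $\Omega_{N;\,k_1,\ldots,k_N}$ and
\[
\|w\|_Z \;=\; \sum_{n\leq N}\|w_n\| + \sum_{n>N}\|x_n\| \;<\; N\cdot\frac{\varepsilon}{2N} + \frac{\varepsilon}{2} \;=\; \varepsilon,
\]
so $x \in \Omega_{N;\,k_1,\ldots,k_N} + \varepsilon B_Z$. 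Enumerating the family $\{\Omega_{N;\,k_1,\ldots,k_N}\}$ as a sequence gives the witness required by Rem. \ref{R:SWCG}(i), proving that $X$ is SWCG with respect to $Z$.

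The main obstacle is the uniform tail-summability of weakly compact subsets of the $\ell_1$-sum $X$; everything else is bookkeeping. This is a standard fact (analogous to the classical uniform integrability criterion in $L_1$), available for instance as the Kadec--Pe\l czy\'nski--type description of relative weak compactness in $\ell_1$-sums, and is precisely what is used in \cite[Prop. 2.9]{SW} in the SWCG case.
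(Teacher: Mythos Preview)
Your argument is correct, but the paper proves this proposition by a genuinely different route. Rather than constructing the witnessing sequence of weakly compact sets directly, the paper invokes the dual characterization of Theorem~\ref{T:Mackey}: since $Z=(\sum_n\oplus Z_n)_1$, one has $B_{Z^*}=\prod_n B_{Z_n^*}$; the metrizable topologies $\tau_{d_n}$ on each $B_{Z_n^*}$ given by the hypothesis are combined into the product topology $\tau_d$ on $B_{Z^*}$, and one checks that both the Mackey topology $\tau$ and the topology $\tau_X$ of uniform convergence on weakly compact subsets of $X$ coincide with the products of the corresponding factor topologies, so that $\tau_X\subseteq\tau_d\subseteq\tau$. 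Both arguments ultimately rest on the same structural fact about $\ell_1$-sums---your uniform-tail condition is exactly what is needed to identify $\tau_X$ with the product of the $\tau_{X_n}$---but the paper's version hides this inside a single topological statement, while you unpack it explicitly. Your approach is closer to the original \cite[Prop.~2.9]{SW} and is self-contained; the paper's is shorter once Theorem~\ref{T:Mackey} is in hand and showcases the utility of that dual characterization.
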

\begin{proof}
According to Th. \ref{T:Mackey} for every $n \in \N$ there exists a metrizable topology
$\tau _{d_n}$ in $B_{Z_n^*}$ such that:
\begin{enumerate}[\upshape (i)]
	\item $\tau _{d_n}$ is coarser than the Mackey topology $\tau _n$ on $B_{Z_n^*}$.
	\item $\tau _{d_n}$ is finer than the topology $\tau _{X_n}$ on $B_{Z_n^*}$ 
	(of uniform convergence on weakly compact subsets of $X_n$).
\end{enumerate}
As $Z$ is an $\ell_1$--direct sum of Banach spaces it follows that  
$B_{Z^*}=\prod_{n=1}^\infty B_{Z^*_n}$. Furthermore it is known by classical results about 
Mackey topology (cf. \cite[Prop. 1.2]{SW}) that Mackey topology $\tau$ on $B_{Z^*}$ is identified with the product topology of the spaces $(B_{Z_n^*}, \tau _n)$. Consider $B_{Z^*}$ endowed with the product topology 
of the topologies $\tau_{d_n}$, which is denoted by $\tau _d$. It is clear that $\tau _d$ 
is metrizable and coarser than $\tau$ (by (i)). If $\tau _X$ is the topology on $B_{Z^*}$ 
of uniform convergence on weakly compact subsets of $X$, then we can easily prove that 
$\tau _X$ is identified with the product topology of $\tau _{X_n}$, $n \in \N $, thus it is coarser 
than $\tau _d$ (the proof is analogous to the proof of the fact that the space $(B_{Z^*}, \tau)$ 
is identified with the topological product of the spaces $(B_{Z_n^*}, \tau _n)$). It follows 
from Th. \ref{T:Mackey} that $X$ is SWCG with respect to $Z$.
\end{proof}
\begin{rem}
It is worth noting that results similar to those described in the above proposition or the analogue of
Schl\" uchterman and Wheeler, are due to the fact that $\ell_1$ norm essentially does not introduce 
new compact sets in an $\ell_1$--direct sum of Banach spaces. So (with analogous techniques) 
we can prove that an $\ell_1$--direct sum of Banach spaces with property (*), has property (*) as well.
\end{rem}
\begin{quest}
Let $X$ be a Banach space with property (*). Is then $X$ SWCG with respect to some superspace?
\end{quest}

\section{Strongly $\K$ Banach spaces.}\label{S:C}
In this section we introduce a subclass of SWKA Banach spaces, that is, those Banach spaces 
which are strongly $\K$ subsets of $(X^{**}, w^*)$. As we shall see this subclass 
contains any other subclass of SWKA Banach spaces considered in this article. 
We recall that the class of $\K$ Banach spaces $X$ 
(that is, Banach spaces $X$, which are $\K$ subset of $(X^{**}, w^*)$) 
contains the closed subspaces of WCG Banach spaces (\cite{T1}) and is contained properly in 
WKA Banach spaces (\cite{AAM1}).
Let $X$ be a SWCG Banach space and $K$ a weakly compact (convex and symmetric) subset of $X$, 
that strongly generates $X$, that is, for every $\varepsilon >0$ and for every weakly compact subset 
$L$ of $X$ there exists $n \in \mathbb{N}$, such that $L \subseteq nK+\varepsilon B_X$. Then  
	\[
	X=\bigcap_{p=1}^\infty \bigcup _{m=1}^\infty \left ( mK+\frac{1}{p}B_{X^{**}} \right ) \, ,
\]
that is, $X$ is a $\K$ subset of $(X^{**}, w^*)$. Moreover for each $p \in \mathbb{N}$ and for each 
weakly compact subset $L$ of $X$ there exists $m \in \mathbb{N}$ such that 
\[
L \subseteq mK+ \frac{1}{p}B_{X^{**}}\, ,
\]
which means that every SWCG Banach space is a strongly $\K$ subset of its second dual $(X^{**}, w^*)$.

As shown below the same is true and for those Banach spaces which have property (*) 
(consequently for those Banach spaces which are SWCG with respect to some superspace). 
This leads to the following definition.
\begin{defin}
A Banach space $X$ is called \emph{strongly $\K$} if it is a strongly $\K$ subset 
of its second dual $(X^{**}, w^*)$.
\end{defin}
It is clear that every strongly $\K$ Banach space is SWKA (see Prop. \ref{P:Kanalytic}).
\begin{thm}
\label{T:*Kcd}
Every Banach space $X$ with property (*) is strongly $\K$.
\end{thm}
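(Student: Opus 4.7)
The plan is to produce an explicit double sequence of $w^*$-compact sets in $X^{**}$ that witnesses the strongly $\K$ property. Let $\{M_{n,p} : n,p \in \mathbb{N}\}$ be a family realising property (*) for $X$; by the remarks following Definition \ref{D:*}, I may assume each $M_{n,p}$ is convex, symmetric and bounded, and that $M_{n,p} \subseteq M_{n+1,p}$ for all $n$, $p$. Set
\[
	K_{p,n} \;=\; \overline{M_{n,p}}^{\,w^*} \;\subseteq\; X^{**}.
\]
Since each $M_{n,p}$ is a bounded subset of $X$, each $K_{p,n}$ is a bounded $w^*$-closed set in $X^{**}$, hence $w^*$-compact by the Banach--Alaoglu theorem. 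The claim is that the double family $\{K_{p,n}\}$ verifies conditions (i) and (ii) of Definition \ref{D:SKcd} for $A=X$ inside $(X^{**},w^*)$.

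Next I would verify the identity $X = \bigcap_{p=1}^\infty \bigcup_{n=1}^\infty K_{p,n}$. For the inclusion $\supseteq$, if $x^{**}$ lies in the right-hand side then for every $p$ there is $n$ with $x^{**} \in \overline{M_{n,p}}^{\,w^*} \subseteq X + \tfrac{1}{p}B_{X^{**}}$ by condition (i) of property (*); hence $\operatorname{dist}(x^{**},X) \leq \tfrac{1}{p}$ for every $p$, and $x^{**}\in X$ because $X$ is norm closed in $X^{**}$. For the reverse inclusion, fix $x \in X$; the singleton $\{x\}$ is weakly compact, so condition (ii) of property (*) yields, for each $p$, some $n$ with $\{x\}\subseteq M_{n,p}$, whence $x\in K_{p,n}$.

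Finally, I would check the strengthened condition (ii) of Definition \ref{D:SKcd}. The topology that $X$ inherits from $(X^{**},w^*)$ coincides with its weak topology, so a compact subset $L$ of $X$ (viewed inside $X^{**}$) is exactly a weakly compact subset of $X$. Applying condition (ii) of property (*) to $L$, for each $p\in\mathbb{N}$ one obtains $n\in\mathbb{N}$ with $L \subseteq M_{n,p} \subseteq K_{p,n}$, which is precisely the required property. The argument is essentially a direct translation of property (*) into the structure of Definition \ref{D:SKcd}; the only subtle point is the identification of weak compactness in $X$ with $w^*$-compactness of subsets of $X\subseteq X^{**}$, together with the use of norm-closedness of $X$ in $X^{**}$ to collapse the intersection back into $X$.
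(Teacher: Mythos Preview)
Your proof is correct and follows essentially the same route as the paper: both take the $w^*$-closures $\overline{M_{n,p}}^{\,w^*}$ in $X^{**}$ and verify directly that this double family witnesses the strongly $\K$ property, using $\overline{M_{n,p}}^{\,w^*}\subseteq X+\tfrac{1}{p}B_{X^{**}}$ for condition (i) and the covering of weakly compact sets for condition (ii). One cosmetic point: boundedness of the $M_{n,p}$ is guaranteed already by the definition of $\tfrac{1}{p}$--weakly relatively compact sets (see the Preliminaries), not by the remarks after Definition~\ref{D:*}.
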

\begin{proof} 
Let $\{K_{n,p} : n, p \in \mathbb{N} \}$ be a family of convex symmetric subsets of $X$ 
with the following properties:
\begin{enumerate}[\upshape (i)]
	\item For every $n$, $p \in \mathbb{N}$ the set $K_{n,p}$ is $\frac{1}{p}$--weakly relatively compact.
	\item For every $p \in \mathbb{N}$ and for every weakly compact subset $L$ of $X$ 
	there exists	$n \in \mathbb{N}$ such that $L \subseteq K_{n,p}$.
\end{enumerate}
For every $n$, $p \in \mathbb{N}$ we have 
	\[
	\overline{K}^*_{n,p} \subseteq X+\frac{1}{p}B_{X^{**}} \, ,\quad \text{so} \quad
	X\subseteq \bigcup_{n=1}^\infty K_{n,p} \subseteq 
	\bigcup _{n=1}^\infty \overline {K}^*_{n,p} 
	\subseteq X+\frac{1}{p}B_{X^{**}}\, ,
\]
hence
	\[
	X \subseteq 
	\bigcap_{p=1}^\infty \bigcup_{n=1}^\infty \overline{K}^*_{n,p} \subseteq 
	\bigcap_{p=1}^\infty \left(X+\frac{1}{p}B_{X^{**}} \right) \subseteq X \, .
\]
Consequently 
	\[
	X=\bigcap_{p=1}^\infty \bigcup_{n=1}^\infty \overline{K}^*_{n,p}\, .
\]
Moreover for every $p \in \mathbb{N}$ and for every weakly compact subset $L$ 
there exists $n\in\mathbb{N}$ such that $L \subseteq \overline{K}^*_{n,p}$, hence 
$X$ is a strongly $\K$ Banach space.
\end{proof} 
Another class of strongly $\K$ Banach spaces is the class of Polish Banach spaces. 
\begin{lem} \label{L:w*}
Let $X$ be a separable Banach space. Then:
\begin{enumerate}[\upshape (i)]
	\item Every open (or closed) subset of $(X^*, w^*)$ is in $w^*$ topology a hemicompact space, 
	so a strongly $\K$ subset of $(X^*, w^*)$.
	\item Every $G_\delta$ subset of $(X^*, w^*)$ is a strongly $\K$ subset of $(X^*, w^*)$.
	\item Every strongly $\mathcal{K}$--analytic subset of $(X^*, w^*)$ is a $G_\delta$ 
	(hence a strongly $\K$) subset of $(X^*, w^*)$.
\end{enumerate}
\end{lem}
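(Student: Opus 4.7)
The plan is to handle (i) through the hemicompactness of $(X^*, w^*)$, derive (ii) as an immediate consequence, and reduce (iii) to the Polish structure on bounded layers, gluing via complements.

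For (i), set $B_n := n B_{X^*}$. By Banach--Alaoglu each $B_n$ is $w^*$-compact, $X^* = \bigcup_n B_n$, and Banach--Steinhaus forces every $w^*$-compact subset of $X^*$ to be norm-bounded and hence contained in some $B_n$, so $(X^*, w^*)$ itself is hemicompact. A closed $F$ inherits this via the compact exhaustion $F \cap B_n$. For an open $U$, the set $U \cap B_n$ is open in the compact metrizable space $(B_n, w^*)$, so it is $\sigma$-compact and locally compact and hence hemicompact by Example \ref{hemicompact}(a); these exhaustions glue across $n$ since any compact subset of $U$ is bounded. Hemicompactness trivially implies strongly $\K$ (take $K_{n,m} := K_m$ in Definition \ref{D:SKcd}). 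For (ii), write a $G_\delta$ set as $G = \bigcap_n V_n$ with $V_n$ open; each $V_n$ is strongly $\K$ by (i), and Proposition \ref{P:intersection} gives that $G$ is strongly $\K$.

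For (iii), let $S \subseteq (X^*, w^*)$ be strongly $\mathcal{K}$-analytic. Since $B_n$ is $w^*$-closed, $S \cap B_n$ is closed in $S$, and restricting the witnessing usco $F \colon \Sigma \to \mathcal{K}(S)$ to $F(\sigma) \cap B_n$ (with the standard trick to dispose of $\sigma$'s where this is empty) shows that $S \cap B_n$ is itself strongly $\mathcal{K}$-analytic. Being a subset of the compact metric space $(B_n, w^*)$, $S \cap B_n$ is metrizable, and \cite[Prop. 1.11.1]{MS} then promotes it to Polish. Alexandroff's theorem gives that $S \cap B_n$ is $G_\delta$ in $B_n$, so $B_n \setminus (S \cap B_n)$ is $F_\sigma$ in $B_n$, and since $B_n$ is closed in $(X^*, w^*)$ it is $F_\sigma$ there as well. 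Summing over $n$,
\[
X^* \setminus S \;=\; \bigcup_{n=1}^\infty \bigl(B_n \setminus (S \cap B_n)\bigr)
\]
is a countable union of $F_\sigma$ subsets of $(X^*, w^*)$, hence $F_\sigma$; taking complements yields that $S$ is $G_\delta$ in $(X^*, w^*)$, and then strongly $\K$ by (ii).

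The main obstacle in (iii) will be bridging the local information ($S \cap B_n$ is $G_\delta$ on each layer) to a global $G_\delta$ description of $S$, since countable unions of $G_\delta$ sets need not be $G_\delta$. The resolution is to dualize: countable unions of $F_\sigma$ sets are always $F_\sigma$, and because each $B_n$ is $w^*$-closed in $(X^*, w^*)$, decomposing $X^* \setminus S$ along the $B_n$'s loses nothing at the $F_\sigma$ level.
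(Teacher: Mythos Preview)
Your proof is correct and follows essentially the same route as the paper: slice by the balls $nB_{X^*}$, use separability of $X$ to get compact metrizability on each slice, and for (iii) pass to complements so that the layerwise $G_\delta$ information aggregates as an $F_\sigma$ union. The only differences are cosmetic---you spell out the Polish step via \cite[Prop.~1.11.1]{MS} and Alexandroff explicitly, and you justify the strong $\mathcal{K}$-analyticity of $S\cap B_n$ by restricting the usco rather than citing closure under closed subspaces---but the architecture is identical.
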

\begin{proof}
\textbf{(i)} As $(X^*, w^*)$ is a hemicompact space it follows immediately that every closed subset 
of it is also a hemicompact space.

Let $U$ be an open subset of $(X^*, w^*)$. For every $n \in \N$ put
	\[U_n=U \bigcap nB_{X^*}= U \bigcap B_{X^*}[0, n].
\]
It is clear that $U_n$ is an open subset of the compact metric space $B_{X^*}[0, n]$ 
(since $X$ is separable). Thus $U_n$ is a locally compact separable metric space, consequently it is a 
hemicompact topological space (cf. Example \ref{hemicompact}). 
$U_n$ can be written in the form $U_n=\bigcup _{m=1}^\infty K_{n, m}$, where every 
$K_{n, m}$ is a compact subset of $(X^*, w^*)$ and in addition for every 
$w^*$ compact subset $K$ of $U_n$ there exists 
$m \in \N$ such that $K \subseteq K_{n, m}$. We remark that 
	\[U=\bigcup _{n=1}^\infty U_n =\bigcup _{n=1}^\infty \bigcup _{m=1}^\infty K_{n, m} \, ,
\]
that is, $U$ is written as a countable union of compact subsets of it. Let $K$ be a $
w^*$ compact subset of $U$. Since $K$ is bounded it is contained in a closed ball 
$B_{X^*}[0, n]$, hence $U \subseteq U_n$. Then there exists $m \in \N$, such that 
$K \subseteq K_{n, m}$, which means that the countable family of compact sets 
$\{K_{n, m}:n, m \in \N \}$ dominates the compact subsets of $U$, thus $U$ is in 
$w^*$ topology a hemicompact space.

\textbf{(ii)}  By (i) and by Prop. \ref{P:intersection}, the conclusion follows.

\textbf{(iii)} Let $A$ be a strongly $\mathcal{K}$--analytic subset of $(X^*, w^*)$. 
Then each $A_n=A\bigcap nB_{X^*}$ is a strongly $\mathcal{K}$--analytic subset of 
$nB_{X^*}$ and so a $G_\delta$ subset of this space. 
Since $X^*\setminus A=\bigcup_{n=1}^\infty(nB_{X^*}\setminus A_n)$ and each of the sets 
$nB_{X^*}\setminus A_n$ is a $\sigma$--compact subset of $nB_{X^*}$ we get that $A$ is a 
$G_\delta$ set in $X^*$.
\end{proof}
\begin{thm}
\label{T:Polish}
Every Polish Banach space is strongly $\K$.
\end{thm}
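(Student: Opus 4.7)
The plan is to show that a Polish Banach space $X$ sits inside $(X^{**}, w^*)$ as a $G_\delta$ set, and then invoke Lemma \ref{L:w*}(ii). First I would observe that since $(B_X, w)$ is Polish it is in particular metrizable, and metrizability of the weak unit ball is classically equivalent to separability of $X^*$; consequently $X^*$ is separable and each $nB_{X^{**}}$ is a compact metric space in the $w^*$ topology. This separability is exactly what will allow me to apply Lemma \ref{L:w*} with its ``separable Banach space'' role played by $X^*$.

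Next I would upgrade \v{C}ech completeness from $B_X$ to $X$. By Goldstine's theorem, $(B_{X^{**}}, w^*)$ is a compactification of $(B_X, w)$, and Polishness (hence \v{C}ech completeness) of the latter makes $B_X$ a $G_\delta$ subset of $(B_{X^{**}}, w^*)$. Scaling by $n$ gives that $nB_X$ is $G_\delta$ in the compact metric space $(nB_{X^{**}}, w^*)$, so its complement inside $nB_{X^{**}}$ is $F_\sigma$, hence $\sigma$-compact in $(X^{**}, w^*)$. The elementary set-theoretic identity
\[
X^{**} \sm X \;=\; \bigcup_{n=1}^{\infty}\bigl(nB_{X^{**}} \sm nB_X\bigr),
\]
which rests on the observation that any $\xi \in X^{**} \sm X$ with $\|\xi\| \leq n$ automatically lies in $nB_{X^{**}} \sm nB_X$, then exhibits $X^{**} \sm X$ as $\sigma$-compact in $(X^{**}, w^*)$, so $X$ is $G_\delta$ there.

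Finally I would apply Lemma \ref{L:w*}(ii), with the separable Banach space $X^*$ in the role of $X$: every $G_\delta$ subset of $((X^*)^*, w^*) = (X^{**}, w^*)$ is a strongly $\K$ subset. In particular $X$ is strongly $\K$ in $(X^{**}, w^*)$, i.e.\ $X$ is a strongly $\K$ Banach space.

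The main obstacle is the second step: \v{C}ech completeness is handed to us only for the weak ball $B_X$, not for the non-bounded space $X$ itself (which is not even a subset of a compact space in any useful topology). The rescaling from $B_X$ to $nB_X$ and the rewriting of $X^{**} \sm X$ as a countable union of differences $nB_{X^{**}}\sm nB_X$ are precisely what bridge this gap; once $X$ is known to be $G_\delta$ in $(X^{**}, w^*)$, the preceding lemma does all the remaining work.
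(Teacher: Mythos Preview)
Your proposal is correct and follows essentially the same route as the paper: show $B_X$ (and hence each $nB_X$) is $G_\delta$ in $nB_{X^{**}}$, pass to $X$ being $G_\delta$ in $(X^{**},w^*)$ via the decomposition $X^{**}\sm X=\bigcup_n(nB_{X^{**}}\sm nB_X)$, and then invoke Lemma~\ref{L:w*}(ii) with $X^*$ playing the role of the separable space. The paper is terser---it delegates the complement decomposition to ``as in the proof of assertion (iii) of Lemma~\ref{L:w*}'' and records the separability of $X^*$ only parenthetically---but the underlying argument is the same.
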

\begin{proof}
Let $X$ be a Polish Banach space. The closed unit ball $B_X$ of $X$ is a $G_\delta$ subset of the closed 
unit ball $(B_{X^{**}}, w^*)$ of $X^{**}$. Moreover 
each ball $nB_X$ is a $G_\delta$ subset of $nB_{X^{**}}$ for $n\geq1$. 
So, as in the proof of assertion (iii) of Lemma \ref{L:w*} (and since $X^*$  is separable) 
we get that $X$ is a $G_\delta$ subset of $(X^{**}, w^*)$. The conclusion follows from assertion 
(ii) of the same Lemma.
\end{proof}
\begin{rem}
\textbf{(a)} From Cor. \ref{C:reflex} if a Polish Banach space is non reflexive 
(for example, the predual of JT space of James), then it does not have property (*), 
especially it is not SWCG with respect to any superspace of it.

\textbf{(b)} A weak$^*$--Polish Banach space (according to Rosenthal's definition (\cite{R2}) is 
a separable Banach space $X$, that embeds to the dual of a (separable) Banach space $Y$ 
such that $B_X$ is a Polish space in the weak$^*$ topology of $Y^*$. It is clear that 
a Polish Banach space $X$ is weak$^*$--Polish in $X^{**}$. This wider class of 
Banach spaces is studied by Rosenthal in \cite{R2}. We note that the argument of the previous theorem 
with Lemma \ref{L:w*} imply that every weak$^*$--Polish Banach space is a strongly $\K$ 
subset of $(Y^*, w^*)$.

\textbf{(c)} The result stated in Th. \ref{T:Polish} is also true and for Banach spaces with 
$(B_X, w)$ \v{C}ech complete space, because every such space is isomorphic to a direct sum 
of a Polish and a reflexive Banach space (cf. \cite{EW} and Prop. \ref{P:fp} below).
\end{rem}
\begin{prop}\label{P:fp}
The class of strongly $\K$ Banach spaces is closed under closed subspaces and finite products.
\end{prop}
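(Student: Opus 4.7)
The plan is to handle the two assertions separately, each by pulling the appropriate stability result from Section \ref{S:A} back to the Banach-space setting via the canonical embeddings of the biduals.

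\textbf{Closed subspaces.} Let $X$ be strongly $\K$ and $W$ a closed linear subspace. Since $W$ is relatively closed in $X$, and $X$ is a strongly $\K$ subset of $(X^{**}, w^*)$, Remark \ref{R:Kcd}(ii) gives at once that $W$ is a strongly $\K$ subset of $(X^{**}, w^*)$. The remaining step is to descend to the intrinsic bidual $W^{**}$. For this I would invoke the standard identification $W^{**}=W^{\perp\perp}\subseteq X^{**}$, which is a $w^*$-$w^*$ homeomorphism onto a $w^*$-closed subspace. Because every dual Banach space is $w^*$-hemicompact (Example \ref{hemicompact}(c)), $W^{\perp\perp}$ is a hemicompact subset of $(X^{**}, w^*)$ that contains $W$. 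Proposition \ref{P:hemi} then transfers the strongly $\K$ property from the ambient $X^{**}$ down to this hemicompact subset, yielding that $W$ is strongly $\K$ in $(W^{**}, w^*)$.

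\textbf{Finite products.} By induction it is enough to treat two factors. Let $X_1$, $X_2$ be strongly $\K$ and $X=X_1\oplus X_2$ (with any equivalent product norm). The canonical identification $X^{**}\cong X_1^{**}\oplus X_2^{**}$ is a $w^*$-$w^*$ homeomorphism when the right-hand side carries the product of the two $w^*$-topologies. Since each $X_i$ is strongly $\K$ in $(X_i^{**}, w^*)$, Proposition \ref{finiteprod} applied to these two sets gives that $X_1\times X_2$ is strongly $\K$ in $X_1^{**}\times X_2^{**}$, which under the above identification is exactly the assertion that $X$ is strongly $\K$ in $(X^{**}, w^*)$.

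The only genuine technicality, which I would flag with a single sentence rather than expand, is the two $w^*$-homeomorphic identifications $W^{**}\cong W^{\perp\perp}$ and $(X_1\oplus X_2)^{**}\cong X_1^{**}\times X_2^{**}$. Both are routine consequences of the dualities $W^*=X^*/W^\perp$ and $(X_1\oplus X_2)^*=X_1^*\oplus X_2^*$, but they are precisely what allows Propositions \ref{P:hemi} and \ref{finiteprod} to be applied inside the correct ambient topological space; without stating them explicitly the argument would not parse.
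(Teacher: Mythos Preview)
Your proposal is correct and follows the same approach as the paper, which simply cites Remark \ref{R:Kcd}(ii) for closed subspaces and Proposition \ref{finiteprod} for finite products. You are in fact more careful than the paper on the closed-subspace part: the bare appeal to Remark \ref{R:Kcd}(ii) only yields that $W$ is strongly $\K$ in $(X^{**},w^*)$, and your extra step through the identification $W^{**}\cong W^{\perp\perp}$ together with Proposition \ref{P:hemi} is precisely what is needed to pass from the ambient bidual $X^{**}$ down to $W^{**}$ itself.
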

\begin{proof}
It follows immediately from Rem. \ref{R:Kcd} (ii) and from Prop. \ref{finiteprod}.
\end{proof}

It is also proved the analogue of \cite[Prop. 5.1]{KM}.
(cf. also Th. \ref{T:sum}).
\begin{thm}\label{T:prodKcd}
Let $(X_n)$ be a sequence of (strongly) $\K$ Banach spaces and $1\leq p< \infty$. 
Then the space $X=\left( \sum_{n=1}^\infty \oplus X_n\right)_p$
is (strongly) $\K$.
\end{thm}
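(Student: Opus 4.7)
My plan is to construct explicitly a countable family of $w^*$-compact subsets of $X^{**}$ that witnesses $X$ as a (strongly) $\K$ subset, obtained by weaving together the witnessing families of the $X_n$ via the $\ell_p$-sum structure. For each $n$ I would extract a family $\{K^n_{j,m}\}_{j,m \in \N}$ of $w^*$-compact subsets of $X_n^{**}$ establishing $X_n = \bigcap_j \bigcup_m K^n_{j,m}$, with the compact-covering property in the strongly $\K$ case. I assume WLOG each $K^n_{j,m}$ is convex, balanced, contains $0$, and is increasing in $m$. For $1 < p < \infty$ the natural identification $X^{**} \cong (\sum_n X_n^{**})_p$ makes the coordinate projections $P_n: X^{**} \to X_n^{**}$ $w^*$-$w^*$-continuous; for $p=1$ the same statement holds with the canonical isometric embedding $(\sum_n X_n^{**})_1 \hookrightarrow X^{**}$.

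The central construction is the following: for each $(j,k,N,\vec m)$ with $\vec m = (m_1,\ldots,m_N) \in \N^N$, set
\[
C_{j,k,N,\vec m} = \Bigl\{x \in X^{**} : P_n(x) \in K^n_{j,m_n} \text{ for } n \le N,\ \sum_{n > N}\|P_n(x)\|^p \le k^{-p}\Bigr\}.
\]
Each such set is $w^*$-compact (bounded, with $w^*$-closed coordinate and tail-norm constraints), and the family is countable. I would verify $X = \bigcap_{(j,k)} \bigcup_{(N,\vec m)} C_{j,k,N,\vec m}$: the inclusion $\subseteq$ uses $\ell_p$-summability of the tails of $x \in X$; for the reverse, given $y$ in the intersection and any $n$ with $y_n \neq 0$, choosing $k > 1/\|y_n\|$ forces $n \le N^{(j,k)}$ and hence $y_n \in K^n_{j,m_n^{(j,k)}} \subseteq \bigcup_m K^n_{j,m}$; since $j$ is arbitrary, $y_n \in X_n$, and finiteness of $\|y\|_{X^{**}}$ gives $y \in X$. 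This settles the $\K$ claim.

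For the strongly $\K$ claim I must verify compact covering: given $w^*$-compact $L \subseteq X$ and $(j,k)$, exhibit $(N, \vec m)$ with $L \subseteq C_{j,k,N,\vec m}$. Each $P_n(L)$ is $w^*$-compact in $X_n$ and is dominated by some $K^n_{j,m_n^*}$ by the strongly $\K$ property of $X_n$. The main obstacle is the uniform tail estimate $\sup_{x \in L}\sum_{n > N}\|P_n(x)\|^p \le k^{-p}$, which fails for generic weakly compact subsets of $\ell_p$-sums (e.g.\ the standard basis $\{e_n\}\cup\{0\} \subseteq \ell_p$). The fix is to augment the family with ``uniformly coordinate-bounded'' compact sets
\[
C'_{j,m} = \{x \in X^{**} : P_n(x) \in K^n_{j,m} \text{ for all } n,\ \|x\|_{X^{**}} \le m\},
\]
after a preliminary WLOG enlargement of the $K^n_{j,m}$'s (preserving $\bigcap_j\bigcup_m K^n_{j,m} = X_n$) so that $K^n_{j,1}$ absorbs a $(1/j)$-ball of $X_n^{**}$. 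These $C'_{j,m}$ cover $X$ at each level $j$ and absorb precisely the ``diagonal'' weakly compact sets missed by the $C_{j,k,N,\vec m}$. The delicate technical core is to choose the $K^n_{j,m}$ with enough uniformity across $n$ so that the combined countable family $\{C_{j,k,N,\vec m}\} \cup \{C'_{j,m}\}$ still intersects precisely to $X$ while dominating every weakly compact $L \subseteq X$ at every level --- this is the analogue of the $\ell_p$-sum argument of \cite[Prop.~5.1]{KM} carried out at the level of the countable family rather than of the usco map.
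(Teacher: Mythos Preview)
Your construction for the plain $\K$ claim when $1<p<\infty$ is sound, but the strongly $\K$ argument has a genuine gap that no choice of the $K^n_{j,m}$ can repair. Take $X_n=\ell_1$ for every $n$, $p=2$, and use the \emph{same} witnessing family $\{K_{j,m}\}$ in each coordinate (the most ``uniform'' choice possible). Let $f_n\in X$ have $n$-th block equal to $e_n\in\ell_1$ and all other blocks zero; then $L=\{f_n:n\in\N\}\cup\{0\}$ is weakly null, hence weakly compact. At any level $(j,k)$ with $k\ge 2$ the set $L$ lies in no $C_{j,k,N,\vec m}$, since $\sum_{n>N}\|P_n f_M\|^2=1$ whenever $M>N$. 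And $L$ lies in no $C'_{j,m}$ either: that would force $\{e_n:n\in\N\}\subseteq K_{j,m}$, and since $K_{j,m}$ is $w^*$-compact in $\ell_1^{**}$ it would then contain every $w^*$-cluster point $\phi$ of $(e_n)$; any such $\phi$ satisfies $\langle\phi,e_k\rangle=0$ for all $k$ yet $\langle\phi,\mathbf 1\rangle=1$, so $\phi\notin\ell_1$. If this happened for every $j$ then $\phi\in\bigcap_j\bigcup_m K_{j,m}=\ell_1$, a contradiction. Thus the combined family cannot dominate $L$, and your ``delicate technical core'' is in fact an obstruction, not a detail. (For $p=1$ there is an additional problem already at the $\K$ stage: the sets $C_{j,k,N,\vec m}$ are $w^*$-closed but not bounded in $X^{**}$, because $X^{**}\supsetneq(\sum_n X_n^{**})_1$ and elements of $X^{**}$ are not determined by their coordinate projections.)

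The paper avoids all of this by not attempting an explicit family. For $p>1$ it sets $Y_n=\bigl(\sum_{k\le n}\oplus X_k\bigr)_p$ and $Z_n=\bigl(\sum_{k>n}\oplus X_k\bigr)_p$, notes that $Y_n\oplus Z_n^{**}$ is (strongly) $\K$ in $X^{**}=Y_n^{**}\oplus Z_n^{**}$ by the finite-product proposition (the second factor being hemicompact), and then applies the countable-intersection proposition to $X=\bigcap_n\bigl(Y_n\oplus Z_n^{**}\bigr)$. The crucial point is that at level $n$ a weakly compact $L\subseteq X$ is dominated via the strongly $\K$ structure of the \emph{finite} product $Y_n$ together with a single ball in $Z_n^{**}$; no uniform tail control across infinitely many coordinates is ever needed. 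The case $p=1$ requires an extra step through the subspace $E=(\sum_n X_n^{**})_1\subseteq X^{**}$, which one first shows is strongly $\K$ using a gliding-hump argument.
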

\begin{proof}
For every $n \in \N $ put $Y_n= \left(\sum_{k=1}^n\oplus X_k \right)_p$ 
and	$Z_n=\left(\sum_{k=n+1}^\infty \oplus X_k\right)_p$. 
So	
$X=Y_n \oplus Z_n$ and $X^{**} =Y_n^{**} \oplus Z_n^{**}$, 
where 	$Y_n^{**}=\left(\sum_{k=1}^n \oplus X_k^{**} \right)_p$. 
More precisely $X=(Y_n\oplus Z_n)_p$ and $X^{**}=(Y_n^{**}\oplus Z_n^{**})_p$.

Let $p>1$. 
It follows from Prop. \ref{finiteprod} that $Y_n \oplus Z_n^{**}$ is a (strongly) $\K$ subset of 
$(X^{**}, w^*)$, thus $Y_n +\dfrac{1}{n}B_{Z_n^{**}}$ is also a (strongly) $\K$ 
subset of $(X^{**}, w^*)$ for every $n \in \N $. Let 
$x^{**} \in \bigcap _{n=1}^\infty (Y_n + \frac{1}{n}B_{Z_n^{**}})$. 
Then there exist 
$y_n \in Y_n$ and $z_n^{**} \in B_{Z_n^{**}}$ such that 
$ x^{**}=y_n+\frac{1}{n}z_n^{**} $ for all $n \in \N$, 
hence $\| x^{**}-y_n \| \leq \frac{1}{n}$ for all $n \in \N$. 
It follows that $y_n \stackrel{\|\cdot \|}{\rightarrow}x^{**}$. Moreover 
$\{y_n :n \in \N \} \subseteq \bigcup _{n=1}^\infty Y_n$ and the subspace 
$\bigcup _{n=1}^\infty Y_n$ is dense in $X=\left( \sum_{n=1}^\infty \oplus X_n\right)_p$, so $x^{**} \in X$.
Thus $\bigcap_{n=1}^\infty\left(Y_n+\frac{1}{n}B_{Z_n^{**}} \right)=X$ 
and from Prop. \ref{P:intersection} $X$ is a (strongly) $\K$ subset of $(X^{**}, w^*)$.

Let $p=1$. 
Put $E=\left(\sum_{k=1}^\infty \oplus X_k^{**} \right)_1$; clearly $E$ is a subspace of $X^{**}$.
We claim that $E$ is a strongly $\K$ subset of $(X^{**}, w^*)$. We first prove that 
\begin{equation} \label{eq:inter}
E=\bigcap_{p=1}^\infty \bigcup_{n=1}^\infty \left(Y_n^{**}+\frac{1}{p}B_{Z_n^{**}} \right) \, .
\end{equation}
Since each $Y_n^{**}+\dfrac{1}{p}B_{Z_n^{**}}$ is a $\sigma$--compact subset of $(X^{**}, w^*)$, 
if equality (\ref{eq:inter}) holds true, then  
$E$ is a $\K$ subset of $(X^{**}, w^*)$. Let 
$x^{**}=(x_n^{**})_n \in E$. 
Then $\sum_{n=1}^\infty \| x_n^{**}\| <+\infty$. So for every $p \in \N$ there exists 
$n(p) \in \N $ such that $\sum_{k=n(p)+1}^\infty \|x_k^{**}\| <\frac{1}{p}$.
It then follows that given $p \in \N$ we have that,  
	\[x^{**}=(x_1^{**}, x_2^{**}, \dots x_{n(p)}^{**},0,0 \dots)+
	(0,0, \dots 0,x_{n(p)+1}^{**}, \ldots ) 
	\in Y_{n(p)}^{**}+\frac{1}{p}B_{Z_{n(p)}^{**}} \, ,
\]
hence 
$x^{**}\in \bigcap_{p=1}^\infty \bigcup_{n=1}^\infty \left(Y_n^{**}+\frac{1}{p}B_{Z_n^{**}} \right)$.

Assume that
$x^{**} \in \bigcap_{p=1}^\infty \bigcup_{n=1}^\infty \left(Y_n^{**}+
	\frac{1}{p}B_{Z_n^{**}} \right)$.
Then for each $p \in \N$ there exists $n(p) \in \N$ such that 
$x^{**} \in Y_{n(p)}^{**}+\frac{1}{p}B_{Z_{n(p)}^{**}}$,
so for every $p \in \N$ there exist $y_p^{**} \in Y_{n(p)}^{**}$ and 
$z_p^{**} \in B_{Z_{n(p)}^{**}}$, such that $x^{**}=y_p^{**}+\frac{1}{p}z_p^{**}$. 
It follows that $\| x^{**}-y_p^{**}\| \leq \dfrac{1}{p}$  for all 
$p \in \N$, which implies that 
$y_p^{**} \to x^{**} \in X^{**}$. As for every $p \in \N$ we have 
$y_p^{**} \in Y_{n(p)}^{**} \subseteq E=\left(\sum_{k=1}^\infty \oplus X_k^{**} \right)_1
	\subseteq X^{**}\,$ ,
it follows that $x^{**} \in E$ and equality (\ref{eq:inter}) has been proved. 
Now let $\Omega \subseteq E$ be a $w^*$--compact subset in $X^{**}$. 
Then using a ''gliding hump'' argument we can prove that for every $\varepsilon >0$ 
there exists $N \in \N$ such that $\sum_{n=N}^\infty \|x_n^{**} \| <\varepsilon $ 
for all $x^{**}=(x_n^{**}) \in \Omega $ (cf. \cite[Prop. 1.2]{SW}). 
This fact together with equality (\ref{eq:inter}), easily imply 
that $E$ is a strongly $\K$ subset of $(X^{**}, w^*)$.
It is now clear that for every $n \in \N$ the space 
$\left( \sum _{k=n+1}^\infty \oplus X_k^{**}\right)_1$ 
is a strongly $\K$ subset of $Z_n^{**}$. As the space 
$Y_n= \left(\sum_{k=1}^n\oplus X_k \right)_1$
is a (strongly) $\K$ subset of $(Y_n^{**}, w^*)$, we conclude that for every 
$n \in \N$ the space 
$\left[\left(\sum_{k=1}^n \oplus X_k \right)_1 
	\oplus \left(\sum_{k=n+1}^\infty X_k^{**} \right)_1\right]_1 $
is a (strongly) $\K$ subset of $\left(Y_n^{**} \oplus Z_n^{**}\right)_1=X^{**}$\, . 
It then follows that the space  
\[\bigcap_{n=1}^\infty \left[ Y_n \oplus \left( \sum _{k=n+1}^\infty X_k^{**}\right)_1\right]_1=X
\]
is a (strongly) $\K$ subset of $X^{**}$.
\end{proof}
Subsequently we examine locally (in the weak topology of a Banach space) 
the property of strong 
$\mathcal{K}$--analyticity and the property of a set to be 
strongly $\K$.
\begin{prop}
Let $X$ be a strongly $\K$ Banach space. Then every ball (open or closed) of $X$ is a strongly $\K$ 
subset of $(X^{**}, w^*)$.
\end{prop}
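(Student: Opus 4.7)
The strongly $\K$ hypothesis on $X$ supplies a countable family $\{K_{n,m} : n, m \in \N\}$ of $w^*$-compact subsets of $X^{**}$ with $X = \bigcap_n \bigcup_m K_{n,m}$ and the domination property that, for every weakly compact $K \subseteq X$ and every $n \in \N$, some $m$ satisfies $K \subseteq K_{n,m}$. My strategy is to refine this family by intersecting each $K_{n,m}$ with a suitable $w^*$-compact subset of $X^{**}$ that cuts out the given ball from $X$.

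For a closed ball $B(x_0, r) = \{x \in X : \|x - x_0\| \leq r\}$, note that $B(x_0, r) = X \cap (x_0 + rB_{X^{**}})$ and $x_0 + rB_{X^{**}}$ is $w^*$-compact in $X^{**}$. Define $L_{n,m} := K_{n,m} \cap (x_0 + rB_{X^{**}})$; each $L_{n,m}$ is $w^*$-compact as the intersection of a $w^*$-closed and a $w^*$-compact set. Distributivity then gives
\[
\bigcap_n \bigcup_m L_{n,m} = \Bigl(\bigcap_n \bigcup_m K_{n,m}\Bigr) \cap (x_0 + rB_{X^{**}}) = X \cap (x_0 + rB_{X^{**}}) = B(x_0, r),
\]
which is condition (i) of Definition \ref{D:SKcd}. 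For condition (ii), any weakly compact $K \subseteq B(x_0, r)$ lies in $K_{n,m}$ for some $m$ by the strongly $\K$ property of $X$, and since $K \subseteq x_0 + rB_{X^{**}}$ we conclude $K \subseteq L_{n,m}$. This case also falls under Remark \ref{R:Kcd}(ii), as $B(x_0, r)$ is norm-closed convex, hence weakly closed in $X$, i.e., relatively closed there with respect to the induced $w^*$-topology.

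For an open ball $B^o(x_0, r)$, use the decomposition $B^o_{X^{**}}(x_0, r) = \bigcup_{k \geq 1}(x_0 + (r - 1/k)B_{X^{**}})$ of the corresponding open ball of $X^{**}$ as a countable union of $w^*$-compact sets, and set $L_{n,(m,k)} := K_{n,m} \cap (x_0 + (r - 1/k)B_{X^{**}})$, indexing by pairs $(m,k) \in \N \times \N$. The analogous distributivity argument yields
\[
\bigcap_n \bigcup_{(m,k)} L_{n,(m,k)} = X \cap B^o_{X^{**}}(x_0, r) = B^o(x_0, r),
\]
so condition (i) holds. Condition (ii) requires, for any weakly compact $K \subseteq B^o(x_0, r)$ and any $n \in \N$, a choice of $m$ with $K \subseteq K_{n,m}$ (available from strongly $\K$-ness of $X$) together with $k$ with $\sup_{x \in K} \|x - x_0\| \leq r - 1/k$.

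The delicate point, which I expect to be the main obstacle, is the selection of $k$ in the open ball case: it amounts to showing that every weakly compact subset of $B^o(x_0, r)$ has norm supremum strictly less than $r$. This is not automatic from weak compactness alone, since in infinite dimensions the norm on a weakly compact set need not attain its supremum, so one must exploit the strongly $\K$ structure of $X$ (and in particular the fact that the $K_{n,m}$ dominating $K$ are themselves norm-bounded) to compress the weakly compact $K$ into a strictly smaller closed ball of $X^{**}$.
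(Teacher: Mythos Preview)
Your closed-ball argument is correct and agrees with the paper: the paper simply invokes Remark~\ref{R:Kcd}(ii) (a closed ball is weakly closed in $X$, hence relatively closed in $(X,w)$, and therefore inherits the strongly $\K$ property), and your explicit intersection $L_{n,m}=K_{n,m}\cap(x_0+rB_{X^{**}})$ unwinds that remark.

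For the open ball, your approach and the paper's coincide: both write $B^o_{X^{**}}(x_0,r)=\bigcup_k\bigl(x_0+(r-\tfrac1k)B_{X^{**}}\bigr)$ and intersect with $X$. The paper then asserts that $(B^o_{X^{**}},w^*)$ is hemicompact, i.e.\ that every $w^*$-compact $L\subseteq B^o_{X^{**}}$ lies in some $(1-\tfrac1k)B_{X^{**}}$; this is precisely the ``delicate point'' you isolate. Your suspicion is well founded: the step fails. Take $X=\ell_2$ (reflexive, hence strongly $\K$), so $(X^{**},w^*)=(\ell_2,w)$, and set $L=\{(1-\tfrac1n)e_n:n\in\N\}\cup\{0\}$. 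Then $L\subseteq B_X^o$, the sequence $(1-\tfrac1n)e_n$ converges weakly to $0$, so $L$ is weakly compact, yet $\sup_{x\in L}\|x\|=1$. Thus $L$ lies in no $(1-\tfrac1k)B_{X^{**}}$, and no appeal to the strongly $\K$ structure of $X$ can repair this.

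In fact the proposition is false for open balls. Still with $X=\ell_2$: the sphere $S_X=\bigcap_n\{x\in B_X:\|x\|>1-\tfrac1n\}$ is a dense $G_\delta$ in the compact metric space $(B_X,w)$, hence comeager there, so its complement $B_X^o$ is meager. Since $B_X^o$ is also $w$-dense in $B_X$, it cannot be a $w$-$G_\delta$. But a strongly $\K$ set is strongly $\mathcal{K}$-analytic (Prop.~\ref{P:Kanalytic}), and a metrizable strongly $\mathcal{K}$-analytic space is Polish; so if $B_X^o$ were strongly $\K$ in $(X^{**},w^*)$ it would be $w$-$G_\delta$ in $B_X$, a contradiction. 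Hence the open-ball claim cannot be proved, and your argument and the paper's share the same unfillable gap.
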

\begin{proof}
Clearly, every closed ball of $X$ is a strongly $\K$ subset of $(X^{**}, w^*)$, 
as a closed subset of $(X, w)$.

We are going to prove that the open unit ball $B_X^o$ of $X$ is a strongly $\K$ subset of $X^{**}$.
It holds that
$	B_X^o=X\bigcap B_{X^{**}}^o $ and 
$B_{X^{**}}^o= \bigcup _{m=1}^\infty (1-\frac{1}{m})B_{X^{**}}$.
The space $(B_{X^{**}}^o, w^*)$ is hemicompact, because for each $w^*$--compact subset $L$ 
of $B_{X^{**}}^o$ there exists $m \in \mathbb{N}$ such that
$L \subseteq (1-\frac{1}{m})B_{X^{**}}$.
Consequently $(B_{X^{**}}^o, w^*)$ is a strongly $\K$ subset of $X^{**}$, so the open 
unit ball of $X$ is a strongly $\K$ subset of $X^{**}$, as intersection of two strongly 
$\K$ subsets.

In a similar way we can prove that any open ball of $X$ is a strongly $\K$ 
subset of $(X^{**}, w^*)$.
\end{proof}
\begin{rem}\label{R:local}
In the same manner we can prove that if $X$ is $\K$, then every ball of $X$ 
is a $\K$ subset of $(X^{**}, w^*)$. If in addition $X$ is separable, then 
it is easy to see that every norm $G_\delta$ subset of $X$ is a $\K$ subset 
of $(X^{**},w^*)$.
\end{rem}
\begin{prop} \label{P:sequence}
Let $X$ be a Banach space and $A \subseteq X$. Then the following are equivalent:
\begin{enumerate}[\upshape(i)]
	\item $(A, w)$ is a strongly $\K$ subset of $(X^{**}, w^*)$.
	\item There exists a family $\{K_{n,m} :n, m \in \mathbb{N} \}$ of $w^*$--compact subsets 
	of $X^{**}$ such that:
\begin{enumerate}
	\item [(a)] $A=\bigcap_{n=1}^\infty \bigcup _{m=1}^\infty K_{n,m}$
	\item [(b)] For each sequence $(x_k)$ of $A$, which converges weakly 
	to some $x$ in $A$ and for each $n \in \mathbb{N}$ there exists $m \in \mathbb{N}$ 
	such that $\{x_k : k \in \mathbb{N} \} \subseteq K_{n,m}$.
\end{enumerate}
\end{enumerate}
\end{prop}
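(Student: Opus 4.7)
The direction (i)$\Rightarrow$(ii) is essentially immediate. If $(A,w)$ is a strongly $\K$ subset of $(X^{**}, w^*)$ with witnessing family $\{K_{n,m}\}$, then condition (a) is exactly part (i) of Definition \ref{D:SKcd}. For condition (b), note that if $(x_k)$ is a weakly convergent sequence in $A$ with limit $x\in A$, then $L=\{x_k:k\in\mathbb{N}\}\cup\{x\}$ is weakly compact in $A$, so for each $n$ there is $m$ with $L\subseteq K_{n,m}$, which gives (b).

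The substantive direction is (ii)$\Rightarrow$(i). The plan is to replace $K_{n,m}$ by $\bigcup_{k\le m}K_{n,k}$, so that we may assume without loss of generality that $(K_{n,m})_m$ is increasing in $m$ for every fixed $n$; clearly both (a) and (b) are preserved under this reduction. Fix $n\in\mathbb{N}$ and a weakly compact set $L\subseteq A$, and suppose toward a contradiction that $L\not\subseteq K_{n,m}$ for every $m\in\mathbb{N}$. Then for each $m$ we may pick $x_m\in L\setminus K_{n,m}$.

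Since $L$ is weakly compact in the Banach space $X$, by the Eberlein--\v{S}mulian theorem the sequence $(x_m)$ has a subsequence $(x_{m_k})$ that converges weakly to some $x\in L\subseteq A$. Applying hypothesis (b) to this subsequence at the fixed level $n$, we obtain $m_0\in\mathbb{N}$ with $\{x_{m_k}:k\in\mathbb{N}\}\subseteq K_{n,m_0}$. Because the family $(K_{n,m})_m$ is increasing, this forces $x_{m_k}\in K_{n,m_k}$ for every $k$ with $m_k\ge m_0$, contradicting the choice $x_{m_k}\notin K_{n,m_k}$. Hence some $K_{n,m}$ does contain $L$, which together with (a) verifies conditions (i) and (ii) of Definition \ref{D:SKcd}, so $(A,w)$ is a strongly $\K$ subset of $(X^{**},w^*)$.

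The main (and really only) subtle point is the passage from the sequential control in (b) to the full control over arbitrary weakly compact subsets of $A$, and this is precisely where Eberlein--\v{S}mulian is essential: it converts weak compactness into weak sequential compactness, so the failure of containment on a weakly compact set can be witnessed along a weakly convergent sequence to which (b) applies. The monotonicity reduction at the start is the only other ingredient needed to make the contradiction bite.
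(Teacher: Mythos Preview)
Your proof is correct and matches the approach the paper has in mind: the authors omit the argument but indicate that it ``uses the sequential compactness of a weakly compact set,'' which is precisely your invocation of Eberlein--\v{S}mulian in the direction (ii)$\Rightarrow$(i), together with the obvious monotonicity reduction. The forward direction is exactly as you wrote.
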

\begin{proof}
The proof uses the sequential compactness of a weakly compact set and as it is simple we omit it. 
\end{proof}
\begin{lem} 
\label{L:Schur}
Let $X$ be a separable Banach space and $V$ an open subset of $X$. Then there exists 
a countable family $\{ V_n : n \in \mathbb{N} \}$ of closed balls of $X$ 
such that $V= \bigcup _{n=1}^\infty V_n$ with the following property: 
For every sequence $(x_n)$ of $X$, $x \in X$ with $x_n \stackrel{\| \cdot \|}{\rightarrow}x$ 
and $L=\{x_n: n\in \mathbb{N} \} \bigcup \{x \} \subseteq V$ there exists 
$N \in \mathbb{N}$ such that $L \subseteq \bigcup_{n=1}^N V_n$.
\end{lem}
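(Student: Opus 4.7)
The plan is to exploit separability of $X$ to construct a countable family of closed balls contained in $V$, with the key invariant that every point of $V$ lies in the \emph{interior} of at least one ball of the family. This interior condition is what will allow a norm-convergent sequence to have its tail absorbed into a single ball.

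First I would fix a countable dense subset $\{y_k : k \in \N\}$ of $X$ and define
\[
\mathcal{F} = \bigl\{\,\overline{B}(y_k, 1/m) : k,m \in \N,\ \overline{B}(y_k, 1/m) \subseteq V\,\bigr\},
\]
enumerated as $\{V_n\}_{n \in \N}$. Then I would verify that $V = \bigcup_n V_n$ by the standard argument: given $x \in V$, choose $\ee > 0$ with $\overline{B}(x, 2\ee) \subseteq V$, pick $y_k$ with $\|x - y_k\| < \ee/2$, and pick $m$ with $\ee/2 < 1/m < \ee$. The triangle inequality gives $\overline{B}(y_k, 1/m) \subseteq \overline{B}(x, 3\ee/2) \subseteq V$, so this ball belongs to $\mathcal{F}$, and crucially $\|x - y_k\| < 1/m$ makes $x$ an \emph{interior} point of it.

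For the sequence property, given $(x_n) \subseteq X$ with $x_n \to x$ in norm and $L = \{x_n : n \in \N\} \cup \{x\} \subseteq V$, I would choose a ball $V_{n_0} = \overline{B}(y_k, 1/m)$ from $\mathcal{F}$ with $x$ in its interior (exists by the previous step). Since $\|x - y_k\| < 1/m$, the convergence $x_n \to x$ forces $\|x_n - y_k\| \leq \|x_n - x\| + \|x - y_k\| < 1/m$ for all $n$ beyond some $n_0$, so the tail of $(x_n)$ sits inside $V_{n_0}$. Each of the finitely many points $x_1,\ldots,x_{n_0-1}$ lies in some member of $\mathcal{F}$, and after re-indexing so these finitely many balls together with $V_{n_0}$ appear among $V_1,\ldots,V_N$, we get $L \subseteq \bigcup_{n=1}^N V_n$.

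The only delicate point is ensuring the constructed ball has $x$ \emph{strictly} in its interior rather than on its boundary; otherwise norm convergence alone would not push the tail of $(x_n)$ into the ball. This is exactly what the strict inequality $\|x - y_k\| < 1/m$ (arranged by choosing $m$ with $\|x-y_k\| < \ee/2 < 1/m$) secures. Everything else is routine metric bookkeeping, and separability of $X$ ensures the family $\mathcal{F}$ is countable.
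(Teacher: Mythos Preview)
Your argument is correct. The paper itself omits the proof, calling it simple, so there is no approach to compare against; your construction---taking all closed balls centered at points of a countable dense set with radii $1/m$ that happen to lie inside $V$, and arranging that every point of $V$ is an \emph{interior} point of some such ball---is the natural one and works as written. One minor phrasing point: rather than ``re-indexing,'' simply take $N$ to be the maximum of the indices of the finitely many balls you selected; they already appear somewhere in your fixed enumeration $\{V_n\}$, so no reordering is needed.
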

\begin{proof}
The proof is simple, so we omit it.
\end{proof}
\begin{lem}
\label{L:Polish}
Let $X$ be a Polish space and $\{F_\sigma : \sigma \in \varSigma \}$ be a family 
of closed subsets of $X$ such that:
\begin{enumerate}[\upshape (i)]
	\item If $\sigma \leq \tau$, then $F_\sigma \subseteq F\tau$.
	\item For every compact subset $K$ of the space $Y=\bigcup _{\sigma \in \varSigma} F_\sigma $ 
	there exists	$\sigma \in \varSigma$ such that $K \subseteq F_\sigma$.
\end{enumerate}
Then $Y$ is Polish.
\end{lem}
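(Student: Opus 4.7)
The plan is to show $Y$ is Polish. Since $Y$ is a subspace of the Polish space $X$, it is separable and metrizable; by the classical Alexandrov--Mazurkiewicz characterization it thus suffices to exhibit $Y$ as a $G_\delta$ subset of $X$.

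Fix a complete bounded metric $d$ on $X$. For each $s=(m_1,\ldots,m_n)\in S$ set
\[
A_s \;=\; \bigcup\{F_\sigma : \sigma|n = s\}\subseteq Y .
\]
Monotonicity (i) yields $A_t\subseteq A_s$ whenever $s$ is a prefix of $t$, and a direct check gives $Y=\bigcup_{\sigma\in\varSigma}\bigcap_{n\in\N} A_{\sigma|n}$. Using the separability of $X$, cover each $A_s$ of length $n$ by a countable family $\{B_{s,k}\}_{k\in\N}$ of open balls in $X$ of $d$-diameter at most $1/n$, organized \emph{nestedly}: for $t$ extending $s$ each $B_{t,k}$ sits inside some $B_{s,k'}$. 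Put $V_n=\bigcup_{|s|=n,\,k\in\N} B_{s,k}$, which is open in $X$ and contains $Y$.

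For the reverse inclusion $\bigcap_n V_n\subseteq Y$, let $x\in\bigcap_n V_n$. A K\"onig-type tree extraction (exploiting nestedness, and using condition (ii) to guarantee that the relevant tree of ``containing'' balls admits an infinite branch) produces $\sigma\in\varSigma$ and $k_n\in\N$ with $x\in B_{\sigma|n,k_n}$ for every $n$. Pick $y_n\in B_{\sigma|n,k_n}\cap A_{\sigma|n}$; since $d(x,y_n)\leq 1/n$ we have $y_n\to x$ in $X$. Write $y_n\in F_{\tau_n}$ with $\tau_n|n=\sigma|n$, and define $\tau^*\in\varSigma$ by
\[
\tau^*(i)\;=\;\max\bigl(\sigma(i),\;\max_{1\leq j<i}\tau_j(i)\bigr),
\]
a finite maximum at each coordinate, so $\tau^*\in\varSigma$. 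One checks $\tau_j\leq\tau^*$ componentwise for every $j$: if $i\leq j$ then $\tau_j(i)=\sigma(i)\leq\tau^*(i)$, and if $i>j$ the value $\tau_j(i)$ itself appears in the maximum defining $\tau^*(i)$. By (i), $y_j\in F_{\tau_j}\subseteq F_{\tau^*}$ for all $j$, and since $F_{\tau^*}$ is closed in $X$ we conclude $x=\lim_j y_j\in F_{\tau^*}\subseteq Y$.

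The main obstacle is the branch-extraction step. Without condition (ii) it can genuinely fail---as in the example $X=\mathbb{R}$, $F_\sigma=\{q_1,\ldots,q_{\sigma(1)}\}$ with $(q_i)$ an enumeration of $\mathbb{Q}$, where $Y=\mathbb{Q}$ is not Polish in $\mathbb{R}$ and (ii) is violated. Condition (ii) must be woven into the construction of the nested covers $\{B_{s,k}\}$ (essentially, refining them so that the compact-covering structure of $Y$ forces each node of the tree $T_x=\{(s,k):x\in B_{s,k}\}$ that is reachable to have descendants at every deeper level), so that for every $x\in\bigcap_n V_n$ the tree $T_x$ admits an infinite branch. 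Carrying out this coordination of nestedness and condition (ii) is the technical heart of the proof.
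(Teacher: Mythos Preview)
Your proposal has a genuine gap: the branch-extraction step, which you yourself label ``the technical heart of the proof,'' is never carried out. You assert that condition (ii) ``must be woven into the construction'' of the covers so that the tree $T_x=\{(s,k):x\in B_{s,k}\}$ admits an infinite branch, but you do not say how. This is not a routine omission. The tree $T_x$ is \emph{infinitely branching} (the index $k$ ranges over $\N$), so K\"onig's lemma does not apply; the fact that nestedness gives $T_x$ nodes at every level does not produce an infinite branch. To make a K\"onig-type argument work you would need some local finiteness, and it is unclear how condition (ii)---a statement about compact subsets of $Y$---yields this for a point $x\in\bigcap_n V_n$ that is not yet known to lie in $Y$. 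Your closing paragraph is essentially a description of the obstacle rather than a resolution of it.

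The paper sidesteps this entirely by quoting Christensen's characterization (\cite[Th.~3.3]{Chr}, \cite[Th.~1.8]{MS}): a metrizable space is Polish if and only if it carries a monotone $\Sigma$-indexed family of compact sets dominating all of its compact subsets. Since $X$ is Polish, take such a family $\{X_\tau\}$ for $X$, set $K^\sigma_\tau=F_\sigma\cap X_\tau$, and check in two lines that $\{K^\sigma_\tau:(\sigma,\tau)\in\Sigma\times\Sigma\}$ witnesses the same property for $Y$ (monotonicity is immediate, and any compact $K\subseteq Y$ lies in some $F_\sigma$ by (ii) and in some $X_\tau$ since $K\subseteq X$). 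The characterization, applied in reverse, then gives that $Y$ is Polish. Your $\tau^*$-trick for pushing all the $y_j$ into a single $F_{\tau^*}$ is correct and would be useful \emph{after} a branch $\sigma$ has been found---but that existence is exactly what remains unproved.
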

\begin{proof}
As $X$ is a Polish space there exists a family 
$\{X_\sigma : \sigma \in \varSigma \}$ of compact subsets of $X$ such that:
\begin{enumerate}
	\item [(a)] If $\sigma \leq \tau$, then $X_\sigma \subseteq X\tau$
	\item [(b)] For each compact subset $K$ of $X$ there exists 
	$\sigma \in \varSigma$ such that $K \subseteq X_\sigma$
\end{enumerate}
(see \cite[Th. 3.3]{Chr} or \cite[Th. 1.8]{MS}). 
Put
$K_\tau^\sigma=F_\sigma \bigcap X_\tau $ 
for all $\sigma ,\tau \in \varSigma$.
Clearly every $ K_\tau^\sigma $ is compact and also for every $\sigma \in \varSigma$ we have 
$F_\sigma=\bigcup_{\tau \in \varSigma}(F_\sigma \bigcap X_\tau)=
	\bigcup_{\tau \in \varSigma} K_\tau ^\sigma $,
so 
$Y=\bigcup _{\sigma \in \varSigma} F_\sigma =\bigcup _{\sigma, \tau \in \varSigma} K_\tau ^\sigma $.
We remark that:
\begin{enumerate}
	\item [(a)] If $\tau _1 \leq \tau _2$ and $\sigma _1 \leq \sigma _2$, then 
	$K_{\tau _1}^{\sigma _1}=F_{\sigma_1}\bigcap X_{\tau_1} \subseteq 
	F_{\sigma_2}\bigcap X_{\tau_2}= K_{\tau _2}^{\sigma _2}$
	\item [(b)] If $K$ is a compact subset of $Y$, then there exist 
	$\sigma$, $\tau \in \varSigma$ such that $K \subseteq K_\tau^\sigma$. 
	(From (ii) and (b) there exists $\sigma \in \varSigma$ such that 
	$K \subseteq F_\sigma$ and $\tau \in \varSigma$ such that 
	$K \subseteq F_\sigma \bigcap X_\tau =K_\tau^\sigma$.)
\end{enumerate}
Then by \cite[Th. 3.3]{Chr} the conclusion follows.
\end{proof}

\begin{prop} \label{P:Gdelta}
Let $X$ be a separable Banach space and $A \subseteq X$. If $(A, w)$ is strongly 
$\mathcal{K}$--analytic, then $(A, \| \cdot \|)$ is strongly $\mathcal{K}$--analytic, 
equivalently $A$ is a norm $G_\delta$ subset of $X$.
\end{prop}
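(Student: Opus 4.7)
My plan is to show that $(A,\|\cdot\|)$ is Polish; this will immediately give both conclusions of the proposition, since a Polish subspace of the Polish space $(X,\|\cdot\|)$ is $G_\delta$ (classical Alexandrov characterization), every Polish space is strongly $\mathcal{K}$--analytic (Th.~\ref{T:compactification} together with Prop.~\ref{P:Kanalytic}), and conversely any separable metrizable strongly $\mathcal{K}$--analytic space is Polish by \cite[Prop.~1.11.1]{MS}. I would produce the Polish conclusion by invoking Lemma~\ref{L:Polish} in the Polish ambient space $(X,\|\cdot\|)$.

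The core step is to transfer the strongly $\mathcal{K}$--analytic structure of $(A,w)$ into a $\Sigma$--indexed family of norm closed sets. Using the standard representation of strongly $\mathcal{K}$--analytic spaces by a monotone compact resolution (the Suslin--scheme technique visible in the proof of Prop.~\ref{P:Kanalytic}, cf.\ \cite[Th.~1.8]{MS}), I would extract from the hypothesis a family $\{F_\sigma:\sigma\in\Sigma\}$ of weakly compact subsets of $A$ satisfying $A=\bigcup_\sigma F_\sigma$, with $\sigma\le\tau\Rightarrow F_\sigma\subseteq F_\tau$, and such that every weakly compact subset of $A$ is contained in some $F_\sigma$. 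The decisive observation is then that each $F_\sigma$, being weakly compact, is weakly closed and hence (the norm topology being finer than the weak one) norm closed in $X$; and any norm compact subset of $A$ is automatically weakly compact and therefore already lies in some $F_\sigma$.

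Lemma~\ref{L:Polish}, applied to the Polish space $(X,\|\cdot\|)$ together with this monotone family of norm closed sets, which dominates the norm compact subsets of $Y=\bigcup_\sigma F_\sigma=A$, then yields that $A$ is Polish in the norm topology. Combined with the general facts recalled in the first paragraph, this simultaneously proves that $(A,\|\cdot\|)$ is strongly $\mathcal{K}$--analytic and that $A$ is a norm $G_\delta$ subset of $X$, establishing the stated equivalence.

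The subtlest ingredient is securing the \emph{monotone} representation in the second paragraph: starting from a bare weakly usco compact covering map $F\colon\Sigma\to\mathcal{K}(A,w)$ provided by the hypothesis, one must carry out the standard refinement through the tree $S$ (exactly as in the proof of Prop.~\ref{P:Kanalytic}) in order to enforce $\sigma\le\tau\Rightarrow F_\sigma\subseteq F_\tau$, which is essential for invoking Lemma~\ref{L:Polish}. Once this is in place, the rest of the argument is a purely formal combination of the lemma with the two elementary observations that weakly compact sets in $X$ are norm closed and that norm compact subsets of $A$ are a fortiori weakly compact.
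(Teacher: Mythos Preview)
Your argument is correct and coincides with the paper's own proof: both extract from the strong $\mathcal{K}$--analyticity of $(A,w)$ a monotone family $\{F_\sigma:\sigma\in\Sigma\}$ of weakly compact sets dominating the weakly compact subsets of $A$, observe that these sets are norm closed while norm compact sets are weakly compact, and then apply Lemma~\ref{L:Polish} in the Polish space $(X,\|\cdot\|)$ to conclude that $A$ is norm Polish, hence norm $G_\delta$. The only difference is expository: the paper simply asserts the existence of the monotone family (implicitly invoking the characterization in \cite[Th.~1.8]{MS}), whereas you spell out how to obtain it from the defining usco map.
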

\begin{proof}
As $(A, w)$ is strongly $\mathcal{K}$--analytic, there exist compact sets 
$\{A_\sigma : \sigma \in \varSigma \}$ such that:
\begin{enumerate}[\upshape (i)]
	\item If $\sigma \leq \tau$, then $A_\sigma \subseteq A_\tau$.
	\item For every weakly compact subset $K$ of $A$ there exists 
	$\sigma \in \varSigma $, such that $K \subseteq A_\sigma$.
\end{enumerate}
Then the sets $A_\sigma$ are weakly compact, so norm--closed in $X$. As every norm compact 
set is weakly compact, by Lemma \ref{L:Polish} the space 
$(A, \| \cdot \|)$ is Polish, so $A$ is a norm $G_\delta$ subset of $X$.
\end{proof}
The following results assure that in certain classes of separable Banach spaces $X$, 
a set $A\subseteq X$ is strongly $\mathcal{K}$--analytic in $(X,w)$ 
if and only if $A$ is a strongly $\K$ set in $(X^{**},w^*)$.
\begin{thm}
\label{P:KG}
Let $X$ be a separable Banach space with Schur property and $A \subseteq X$. 
Then the following are equivalent:
\begin{enumerate}[\upshape (i)]
	\item $(A, w)$ is strongly $\mathcal{K}$--analytic
	\item $(A, \| \cdot \|)$ is a $G_\delta$ υsubset of $(X, \| \cdot \|)$ (equivalently a Polish space)
	\item $A$ is a strongly $\K$ subset of $(X^{**}, w^*)$.
\end{enumerate}
\end{thm}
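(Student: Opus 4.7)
The proof decomposes into three implications. The implication (iii) $\Rightarrow$ (i) is immediate from Proposition \ref{P:Kanalytic}, since the topology inherited by $A \subseteq X$ from $(X^{**}, w^*)$ is precisely the weak topology of $X$. The implication (i) $\Rightarrow$ (ii) is exactly the content of Proposition \ref{P:Gdelta}. The substantive content lies in (ii) $\Rightarrow$ (iii), and our plan is to lift the norm-$G_\delta$ description of $A$ to a strongly $\K$ description in $(X^{**}, w^*)$ by combining Lemma \ref{L:Schur} with Goldstine's theorem and the Schur property.

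Write $A = \bigcap_n U_n$ with each $U_n$ norm open in $X$. For each $n$, we apply Lemma \ref{L:Schur} to $U_n$, with the routine refinement that the closed balls $V_k^n = \bar{B}_X(x_k^n, r_k^n)$ produced by the lemma may be chosen so that $r_k^n \leq 1/n$ (obtained by intersecting any ball supplied by the original proof with the ball of radius $1/n$ about its center). By Goldstine's theorem, the $w^*$-closure of $V_k^n$ in $X^{**}$ is the ball $\widetilde{V}_k^n := \bar{B}_{X^{**}}(x_k^n, r_k^n)$, which is $w^*$-compact by Banach--Alaoglu. Set $K_{n,m} := \bigcup_{k=1}^m \widetilde{V}_k^n$, which is $w^*$-compact as a finite union.

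We claim that the double sequence $(K_{n,m})$ witnesses that $A$ is strongly $\K$ in $(X^{**}, w^*)$. The inclusion $A \subseteq \bigcap_n \bigcup_m K_{n,m}$ is clear. Conversely, if $x^{**}$ lies in the right-hand side, then for each $n$ there exists $k_n$ with $\|x^{**} - x_{k_n}^n\|_{X^{**}} \leq r_{k_n}^n \leq 1/n$, so $x_{k_n}^n \to x^{**}$ in norm; since $X$ is norm-closed in $X^{**}$ we obtain $x^{**} \in X$, and then $x^{**} \in \widetilde{V}_{k_n}^n \cap X = V_{k_n}^n \subseteq U_n$ for every $n$, giving $x^{**} \in A$. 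For condition (ii) of Definition \ref{D:SKcd}, a compact subset $L$ of $A$ in the induced topology is a weakly compact subset of $X$ contained in $A$; by the Schur property, $L$ is norm compact, and a short extraction argument using Lemma \ref{L:Schur} together with metrizability of $L$ shows that for each $n$ there exists $N$ with $L \subseteq \bigcup_{k=1}^N V_k^n \subseteq K_{n,N}$.

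The main obstacle is the verification $\bigcap_n \bigcup_m K_{n,m} \subseteq X$: this is precisely what forces the shrinking-radius refinement of Lemma \ref{L:Schur} and the appeal to norm-closedness of $X$ in $X^{**}$. The Schur property also enters indispensably by matching the class of compact sets tested in Definition \ref{D:SKcd}, namely the $w^*$-compact subsets of $X$ (equivalently, the weakly compact subsets of $X$), with the norm compact sets that Lemma \ref{L:Schur} is actually equipped to control.
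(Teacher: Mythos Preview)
Your argument for (ii)$\Rightarrow$(iii) is correct and follows a genuinely different route from the paper's. The paper first reduces to showing that each norm-open $V\subseteq X$ is strongly $\K$ in $(X^{**},w^*)$ (and then intersects via Proposition~\ref{P:intersection}); for this it invokes the fact that a separable Schur space is SWCG, so that $X$ itself already has a strongly $\K$ description $X=\bigcap_n\bigcup_m(mK+\tfrac1n B_{X^{**}})$, which it then intersects with the $\sigma$-compact set $\bigcup_m B_{X^{**}}[y_m,r_m]$ coming from Lemma~\ref{L:Schur}, and finally checks the ``strong'' part through the sequential criterion of Proposition~\ref{P:sequence}. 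Your approach bypasses both the SWCG machinery and Proposition~\ref{P:sequence}: by forcing the radii in the $n$-th cover to be at most $1/n$, the inclusion $\bigcap_n\bigcup_m K_{n,m}\subseteq X$ (hence $\subseteq A$) drops out of the norm-closedness of $X$ in $X^{**}$, and Definition~\ref{D:SKcd}(ii) is verified directly once Schur converts a weakly compact $L\subseteq A$ into a norm-compact one. This is more self-contained; the paper's version, in exchange, keeps Lemma~\ref{L:Schur} in its unmodified form and makes the role of the SWCG structure of $X$ explicit.

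One small correction: your parenthetical justification of the radius bound (``intersecting any ball supplied by the original proof with the ball of radius $1/n$ about its center'') does not work as stated, since shrinking each ball about its own center can destroy the covering $U_n=\bigcup_k V_k^n$. The refinement is nonetheless valid and routine: one simply reruns the (omitted) proof of Lemma~\ref{L:Schur} using from the outset only closed balls of radius at most $1/n$ contained in $U_n$; separability yields a countable such family covering $U_n$, and any norm-compact $L\subseteq U_n$ is covered by finitely many of them by the same compactness argument.
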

\begin{proof}  \textbf{(i) $\Rightarrow$ (ii)} It is immediate from Prop. \ref{P:Gdelta}.

\textbf{(ii)$ \Rightarrow $ (iii)} It is enough to prove that the result holds for norm open subsets of $X$. 
Let $V$ be a norm open subset of $X$. As $X$ is separable $V$ is written as a countable union 
of closed balls, as in Lemma \ref{L:Schur}. Let 
$	V=\bigcup _{n=1}^\infty B[y_n, r_n]=\bigcup_{n=1}^\infty V_n$. 
Then	$V=X\bigcap \left(\bigcup _{n=1}^\infty B_{X^{**}}[y_n, r_n] \right)$.
The space $X$ is separable Banach space with Schur property, so it is SWCG. Let $K$ be a weakly 
compact subset of $X$, that strongly generates $X$. Then 
	\[X=\bigcap_{n=1}^\infty \bigcup_{m=1}^\infty (mK+\frac{1}{n}B_{X^{**}})
\]
and 
	\[V=\bigcap_{n=1}^\infty \left[\bigcup_{m=1}^\infty (mK+\frac{1}{n}B_{X^{**}})\right]
	\cap \left[\bigcup_{n=1}^\infty \left(\bigcup _{m=1}^n B_{X^{**}}[y_m, r_m]\right)\right]\, .
\]
Assume $x_n \stackrel{w}{\rightarrow}x$ and
$L=\{x_n: n\in \mathbb{N} \} \bigcup \{x \} \subseteq V$. The set $L$ is weakly compact, 
so for each $n \in \mathbb{N}$ there exists $m \in \mathbb{N}$ such that 
$L \subseteq mK+\frac{1}{n}B_{X^{**}}$. 
Furtheremore $X$ has Schur property, so $x_n \stackrel{\| \cdot \|}{\rightarrow}x$. 
Then according to Lemma \ref{L:Schur} there exists $m \in \mathbb{N}$ such that 
$L \subseteq \bigcup_{n=1}^m V_n $, so 
$L \subseteq  \bigcup_{n=1}^m B_{X^{**}}[y_n, r_n] $, 
which completes the proof.

\textbf{(iii) $\Rightarrow $ (i)} Every strongly $\K$ subset of 
$(X^{**}, w^*)$ is strongly $\mathcal{K}$--analytic space.
\end{proof}
The following result is an immediate consequence of Lemma \ref{L:w*}.
\begin{prop}
Let $X$ be a Banach with separable dual and $A\subseteq X$. Then the following are equivalent:
\begin{enumerate}[\upshape (i)]
	\item $(A, w)$ is strongly $\mathcal{K}$--analytic.
	\item $(A, w)$ is strongly $\K$ subset of $(X^{**}, w^*)$.
	\item $(A, w)$ is a $G_\delta$ subset of $(X^{**}, w^*)$.
\end{enumerate}
\end{prop}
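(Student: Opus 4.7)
The plan is to derive all three equivalences from Lemma \ref{L:w*} (applied with the separable Banach space taken to be $X^*$, so its dual is $(X^{**}, w^*)$) together with Proposition \ref{P:Kanalytic}. The first thing I would emphasize is the purely topological observation that the weak topology on $X$ coincides with the restriction of the $w^*$ topology of $X^{**}$ to $X$; hence for any $A\subseteq X$ the space $(A,w)$ is literally the same topological space as $A$ viewed as a subspace of $(X^{**}, w^*)$. This identification is what lets us interpret each of (i), (ii), (iii) as a statement about $A$ inside $(X^{**}, w^*)$.

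For the implication (iii) $\Rightarrow$ (ii) I would simply invoke Lemma \ref{L:w*}(ii) with $X^*$ in place of the separable Banach space: every $G_\delta$ subset of $(X^{**}, w^*)$ is strongly $\K$ in $(X^{**}, w^*)$. For (ii) $\Rightarrow$ (i), a strongly $\K$ subset is strongly $\mathcal{K}$--analytic in its relative topology by Proposition \ref{P:Kanalytic}, and by the topological identification above the relative topology on $A$ inherited from $(X^{**}, w^*)$ is exactly $(A,w)$.

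For (i) $\Rightarrow$ (iii) I would again apply Lemma \ref{L:w*}, this time part (iii), with $X^*$ as the separable Banach space: every strongly $\mathcal{K}$--analytic subset of $(X^{**}, w^*)$ is a $G_\delta$ subset of $(X^{**}, w^*)$. Since $(A,w)$ is strongly $\mathcal{K}$--analytic by hypothesis, the identification of topologies shows $A$ is strongly $\mathcal{K}$--analytic inside $(X^{**}, w^*)$, and Lemma \ref{L:w*}(iii) delivers the conclusion.

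There is no real obstacle here: the whole argument is bookkeeping. The only point worth stating carefully is the identification $(A,w) = A\subseteq (X^{**}, w^*)$ as topological spaces, which is used in every direction. Once that is said, the three steps are one-line applications of the results already in hand.
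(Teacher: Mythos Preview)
Your proof is correct and matches the paper's own approach: the paper simply states that the result is an immediate consequence of Lemma~\ref{L:w*}, and you have accurately unpacked this by applying Lemma~\ref{L:w*}(ii) and (iii) to $(X^{**},w^*)$ (the dual of the separable space $X^*$) together with Proposition~\ref{P:Kanalytic} for the implication (ii)$\Rightarrow$(i). Your explicit remark that the weak topology on $A$ coincides with the relative $w^*$ topology from $X^{**}$ is a helpful clarification that the paper leaves implicit.
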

\begin{cor}
Let $X$ be a Banach space with separable dual. The following are equivalent:
\begin{enumerate}[\upshape (i)]
	\item $X$ is SWKA.
	\item $B_X$ is strongly $\K$ subset of $(B_{X^{**}}, w^*)$.
	\item $X$ is Polish.
	\item $X$ is strongly $\K$. 
\end{enumerate}
\end{cor}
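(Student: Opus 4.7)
The plan is to use the immediately preceding proposition (which characterises, for a Banach space $X$ with separable dual and any $A \subseteq X$, strong $\mathcal{K}$-analyticity of $(A,w)$ in terms of the strongly $\K$ and $G_\delta$ properties of $A$ in $(X^{**}, w^*)$) together with Theorem \ref{T:Polish} to close a short cycle of implications among the four conditions.

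First I would observe that the equivalence (i)$\Leftrightarrow$(iv) is a direct application of that preceding proposition in the special case $A=X$: by definition $X$ is SWKA iff $(X,w)$ is strongly $\mathcal{K}$-analytic, and this is equivalent to $X$ being a strongly $\K$ subset of $(X^{**}, w^*)$, which is the definition of a strongly $\K$ Banach space. For (iii)$\Rightarrow$(iv) I would simply quote Theorem \ref{T:Polish}. To close the loop I plan to prove (iv)$\Rightarrow$(ii)$\Rightarrow$(iii). The implication (iv)$\Rightarrow$(ii) uses that $B_X$, being norm-closed and hence weakly closed in $X$, is relatively closed in $X$ viewed inside $(X^{**}, w^*)$; by Remark \ref{R:Kcd}(ii) it is then strongly $\K$ in $(X^{**}, w^*)$, and since $(B_{X^{**}}, w^*)$ is compact (hence $\sigma$-compact) and contains $B_X$, Proposition \ref{P:hemi} transports the strongly $\K$ property into $B_{X^{**}}$. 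For (ii)$\Rightarrow$(iii), separability of $X^*$ makes $(B_{X^{**}}, w^*)$ a compact metric space, so $B_X$ is a metrizable strongly $\K$ subset of a topological space; the corollary following Proposition \ref{P:Kanalytic} then guarantees $(B_X, w)$ is Polish, which is precisely the definition of (iii).

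The proof is essentially an assembly of tools already developed in the paper, and no step involves genuine difficulty. The main bookkeeping task is to keep track of which ambient space and which subset each invoked result is being applied to, and to use the separability of $X^*$ at the right moments (once to render $(B_{X^{**}}, w^*)$ metrizable in the step (ii)$\Rightarrow$(iii), and implicitly in the earlier proposition used for (i)$\Leftrightarrow$(iv)).
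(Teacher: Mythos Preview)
Your proposal is correct and follows essentially the same route as the paper, which simply says the result ``is immediate from the previous proposition and Th.~\ref{T:Polish}.'' You have merely made explicit the intermediate steps (Remark~\ref{R:Kcd}(ii), Proposition~\ref{P:hemi}, and the corollary after Proposition~\ref{P:Kanalytic}) that the paper leaves to the reader.
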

\begin{proof} 
 It is immediate from the previous proposition and Th. \ref{T:Polish}. 
\end{proof}

We now mention some open questions.

\textbf{(A)} Let $X$ be a Banach space. 
\begin{enumerate}[\upshape (i)]
	\item If $X$ is SWKA, is then strongly $\K$ or at least $\K$?
	\item If $X$ is SWKA with unconditional basis, is then $X$ strongly $\K$?
\end{enumerate}
We note that there exists a WKA Banach space, that is not $\K$ and in addition every 
WKA Banach space with unconditional basis is $\K$ (\cite{AAM1}, \cite{AAM2}).

\textbf{(B)} Let $X$ be a Banach space. 
\begin{enumerate} [\upshape (i)]
	\item Assume that the closed unit ball $B_X$ of $X$ is a $\K$ subset of $(X^{**}, w^*)$. 
	Is then $X$ itself a $\K$ subset of $(X^{**}, w^*)$? (If $X$ is separable, then it is of course 
	a $\K$ subset of $(X^{**}, w^*)$.)
	\item Assume that the closed unit ball $B_X$ of $X$ is a strongly $\K$ subset of $(X^{**}, w^*)$. 
	Is then $X$ itself a strongly $\K$ subset of $(X^{**}, w^*)$?
\end{enumerate}

\textbf{(C)} Let $X$ be a separable Banach space. 
\begin{enumerate}[\upshape (i)]
	\item Assume that $X$ is strongly $\K$. Does there exists a norm $G_\delta$ subset of $X$, 
	that is not a strongly $\K$ subset of $(X^{**}, w^*)$? (cf. Remark \ref{R:local}.)
	\item Let $X=C[0,1]$. Does there exists a (necessarily norm $G_\delta$) strongly $\mathcal{K}$--analytic 
	subset of $(X, w)$, that is not a strongly $\K$ subset of $(X^{**}, w^*)$? (cf. Question \ref{Q:A}.)
\end{enumerate}

It is worth noting that questions (A) and (B)(ii) make sense even in the class of separable 
Banach spaces.

\end{document}